\documentclass[a4paper,10pt,reqno]{amsart}

\usepackage[utf8]{inputenc}
\usepackage[T1]{fontenc}
\usepackage{lmodern}
\usepackage[english]{babel}
\usepackage{microtype}
\usepackage{pdfsync}

\usepackage{amsmath,amssymb,amsfonts,amsthm}
\usepackage{mathtools,accents}
\usepackage{mathrsfs}
\usepackage{aliascnt}
\usepackage{braket}
\usepackage{bm}
\usepackage{esint}

\usepackage[a4paper,margin=3cm]{geometry}
\usepackage[citecolor=blue, linkcolor=red,colorlinks]{hyperref}

\usepackage{enumerate}
\usepackage{xcolor}
\usepackage{xspace}

%
%
%
%
%
%

\makeatletter
\g@addto@macro\@floatboxreset\centering
\makeatother


\makeatletter
\def\newaliasedtheorem#1[#2]#3{
	\newaliascnt{#1@alt}{#2}
	\newtheorem{#1}[#1@alt]{#3}
	\expandafter\newcommand\csname #1@altname\endcsname{#3}
}
\makeatother

\numberwithin{equation}{section}

\newtheoremstyle{slanted}{\topsep}{\topsep}{\slshape}{}{\bfseries}{.}{.5em}{}

\theoremstyle{plain}
\newtheorem{theorem}{Theorem}[section]
\newaliasedtheorem{proposition}[theorem]{Proposition}
\newaliasedtheorem{lemma}[theorem]{Lemma}
\newaliasedtheorem{corollary}[theorem]{Corollary}
\newaliasedtheorem{counterexample}[theorem]{Counterexample}

\theoremstyle{definition}
\newaliasedtheorem{definition}[theorem]{Definition}
\newaliasedtheorem{question}[theorem]{Question}
\newaliasedtheorem{assumption}[theorem]{Assumption}
\newaliasedtheorem{conjecture}[theorem]{Conjecture}

\theoremstyle{remark}
\newaliasedtheorem{remark}[theorem]{Remark}
\newaliasedtheorem{example}[theorem]{Example}


\newcommand{\setN}{\mathbb{N}}

\newcommand{\setR}{\mathbb{R}}

\newcommand{\eps}{\varepsilon}

\let\altphi\phi
\let\phi\varphi
\let\varphi\altphi
\let\altphi\undefined

\newcommand{\abs}[1]{\left\lvert#1\right\rvert}
\newcommand{\norm}[1]{\left\lVert#1\right\rVert}



\let\div\undefined
\DeclareMathOperator{\div}{div}

\newcommand{\di}{\mathop{}\!\mathrm{d}}

\newcommand{\loc}{{\rm loc}}

\newcommand{\restr}{\raisebox{-.1618ex}{$\bigr\rvert$}}
\DeclareMathOperator{\supp}{supp}

\DeclareMathOperator{\Lip}{Lip}


\newcommand{\haus}{\mathscr{H}}
\newcommand{\leb}{\mathscr{L}}


\newcommand{\XX}{{\boldsymbol{X}}}
\newcommand{\YY}{{\boldsymbol{Y}}}

\newcommand{\sym}{\mathrm{sym}}

\newcommand{\dist}{\mathsf{d}}

\newcommand{\meas}{\mathfrak{m}}

\newcommand{\ap}{\mathsf{ap}}


\newcommand{\Test}{\rm Test}

\DeclareMathOperator{\RCD}{RCD}
\DeclareMathOperator{\ncRCD}{ncRCD}
\newfont{\tmpf}{cmsy10 scaled 2500}



\def\XXint#1#2#3{{\setbox0=\hbox{$#1{#2#3}{\int}$ }
		\vcenter{\hbox{$#2#3$ }}\kern-.6\wd0}}


\begin{document}
	
\title{Improved regularity estimates for Lagrangian flows on $\RCD(K,N)$ spaces}

\author{Elia Bru\'e}  {\thanks{Institute for Advanced Study (Princeton), \url{elia.brue@ias.edu}}}
\author{Qin Deng}     {\thanks{University of Toronto, \url{qin.deng@mail.utoronto.ca}}}
\author{Daniele Semola}  {\thanks{Mathematical Institute, University of Oxford, \url{Daniele.Semola@maths.ok.ac.uk}}}

\maketitle

\begin{abstract}
This paper gives a contribution to the study of regularity of Lagrangian flows on non-smooth spaces with lower Ricci curvature bounds. The main novelties with respect to the existing literature are the better behaviour with respect to time and the local nature of the regularity estimates. These are obtained sharpening previous results of the first and third authors, in combination with some tools recently developed by the second author (adapting to the synthetic framework ideas introduced in \cite{ColdingNaber12}).\\ 
The estimates are suitable for applications to the fine study of $\RCD$ spaces and play a central role in the construction of a parallel transport in this setting.
\end{abstract}


\section{Introduction and main results}

This note deals with regularity estimates for flows of Sobolev velocity fields over non-smooth spaces with synthetic Ricci curvature bounds. With respect to the previous contributions of the first and third author \cite{BrueSemolaahlfors,BrueSemola18} the refinements will be in two directions:
\begin{itemize}
\item a sharper behaviour of the estimates with respect to time;
\item the improvement from infinitesimal estimates to local estimates.
\end{itemize}

\medskip

Flows of vector fields are classically a powerful tool in Partial Differential Equations, Geometric Measure Theory, Differential and Riemannian Geometry. In more recent years, they have turned out to be crucial also in Non Smooth Geometry and Analysis on metric spaces.\\ 
On the one hand, gradient flows of semiconcave functions are fundamental in Alexandrov geometry, see for instance \cite{Petrunin07}. On the other hand, flows of vector fields with integrability rather than uniform bounds on their derivatives are at the core of some developments in the theory of lower Ricci curvature bounds, starting from the seminal \cite{CheegerColding96}.   
\medskip

The framework of our investigation will be that of $\RCD(K,N)$ metric measure spaces, which are a non smooth counterpart of Riemannian manifolds with lower bounds on the Ricci curvature. The $\RCD(K,N)$ class  includes  $N$-dimensional Alexandrov spaces equipped with the Hausdorff measure $\haus^N$ and Ricci limit spaces, i.e. measured Gromov-Hausdorff limits of smooth Riemannian manifolds with lower Ricci curvature bounds. We avoid giving a detailed introduction to this class of spaces and refer the interested reader to the survey paper \cite{Ambrosio18} and references therein.

\subsection*{Vector fields and flow maps on metric measure spaces}

On a metric measure space $(X,\dist,\meas)$ we can understand vector fields as derivations over an algebra of test functions and the divergence operator via integration by parts, see \cite{AmbrosioTrevisan14}.
 In this note we will rely throughout also on the identification of vector fields with elements of the so-called tangent module $L^2(TX)$, referring to \cite{Gigli14} for the relevant background.

As shown in \cite{Gigli14}, there is a second order differential calculus available on $\RCD(K,N)$ spaces (and, more in general, on $\RCD(K,\infty)$ spaces). In particular, the presence of a large class of \emph{regular} test functions $\Test(X,\dist,\meas)$ (see \cite{Savare14,Gigli14}) allows to introduce a natural notion of (time dependent) Sobolev vector field $b\in L^2([0,T];H^{1,2}_{C,s}(TX))$, that we recall below, in the autonomous case for the sake of simplicity.

\begin{definition}\label{def:Sobvect}
		The Sobolev space $H^{1,2}_{C,s}(TX)\subset L^2(TX)$ is the space of all $b\in L^2(TX)$ with $\div b\in L^{2}(X,\meas)$ for which there exists a tensor $S\in L^{2}(T^{\otimes 2}X)$ such that, for any choice of $h,g_1,g_2\in\Test(X,\dist,\meas)$, it holds
		\begin{equation}\label{eq:Sobvectfield}
		\int h S(\nabla g_1,\nabla g_2)\di\meas=\frac{1}{2}\int\left\lbrace -b(g_2)\div(h\nabla g_1)-b(g_1)\div(h\nabla g_2)+\div(hb)\nabla g_1\cdot\nabla g_2\right\rbrace \di\meas.
		\end{equation}
		In this case we shall call $S$ the symmetric covariant derivative of $b$ and we will denote it by $\nabla_{\sym} b$. 
\end{definition}

The definition above is the natural counterpart, tailored for vector fields, of the notion of Hessian on $\RCD(K,\infty)$ metric measure spaces (see \cite[Definition 3.3.1]{Gigli14}), which is based in turn on the weak definition of Hessian proposed by Bakry in \cite{Bakry97} in the framework of $\Gamma$-calculus (see also \cite{Sturm14}).

It is easy to verify via the usual calculus rules that, on smooth Riemannian manifolds, smooth vector fields with compact support belong to $H^{1,2}_{C,s}(TX)$ and that the tensor $S$ in \autoref{def:Sobvect} is the symmetric part of the covariant derivative.

\medskip


Following \cite{AmbrosioTrevisan14} we introduce the natural notion of \emph{flow} in this framework.

\begin{definition}[Regular Lagrangian flow]
	\label{def:Regularlagrangianflow}
	We say that $\XX:[0,T]\times X\rightarrow X$ is a \textit{Regular Lagrangian flow} of $b\in L^1([0,T];L^2(TX))$ if the following conditions hold true:
	\begin{itemize}
		\item [(1)] $\XX(0,x)=x$ and $X(\cdot,x)\in C([0,T];X)$ for every $x\in X$;
		\item [(2)] there exists $L\ge0$, called \textit{compressibility constant}, such that
		\begin{equation}\label{eq:compressibility}
			(\XX(t,\cdot))_{*} \meas\leq L\meas,\qquad\text{for every $t\in [0,T]$}\, ;
		\end{equation}
		\item [(3)] for every $f\in \Lip(X,\dist)$, for $\meas$-a.e. $x\in X$ the map $t\mapsto f(\XX(t,x))$ is absolutely continuous and
		\begin{equation}\label{eq: RLF condition 3}
			\frac{\di}{\di t} f(\XX(t,x))= b_t\cdot \nabla f(\XX(t,x)) \quad \quad \text{for a.e.}\ t\in (0,T)\, .	
		\end{equation}
	\end{itemize}
\end{definition}

It has been proven in \cite{AmbrosioTrevisan14} that any bounded vector field $b\in H^{1,2}_{C,s}(TX)$ with bounded divergence admits a unique Regular Lagrangian flow. This means that, if $\XX^1$ and $\XX^2$ are Lagrangian flows associated to $b$ then $\XX^1(t,x)=\XX^2(t,x)$ for any $t\in [0,T]$, for $\meas$-a.e. $x\in X$.
\medskip

Given $s\in [0,T]$ we can define $\XX(s,t, x)$, for $t\in [s,T]$, as the Lagrangian flow of $b$ starting at time $t=s$ from the point $x\in X$. Note that $\XX(0,t,x)=\XX(t,x)$. Exploiting the uniqueness of Lagrangian flows of Sobolev vector fields one can easily check that, for any $0\le s<T$, for $\meas$-a.e. $x\in X$ it holds
\begin{equation}\label{eq:semigroup property}
	\XX(s,t,\XX(s,x))=\XX(t,x),
	\quad \text{for any $t\in [s,T]$}\, .
\end{equation}
It is worth remarking that the assumption $\div b\in L^{\infty}([0,T]\times X)$ allows us to sharpen \eqref{eq:compressibility} into
\begin{equation}\label{eq:divergence bound}
	e^{-t\norm{\div b}_{L^{\infty}}}\meas \le (\XX(t,\cdot))_{*}\meas \le e^{t\norm{\div b}_{L^{\infty}}}\meas,
	\quad\text{for any $t\in [0,T]$},
\end{equation}
as proven in \cite[Theorem 4.6]{AmbrosioTrevisan14}.
\medskip

In order to ease the notation we are going to write $\XX_t(x)$/$\XX_{s,t}(x)$ in place of $\XX(t,x)$ and $\XX(s,t,x)$. We shall also abbreviate Regular Lagrangian flow to RLF sometimes.
\medskip

Readers more interested in Geometric Analysis over smooth Riemannian manifolds are encouraged to assume that $(X,\dist,\meas)$ is a smooth Riemannian manifold equipped with the Riemannian distance and the Riemannian volume measure, and that $b$ is a smooth vector field. Under these assumptions Regular Lagrangian flows are classical flows.
In this case, the interest of the results that we are going to present stands in their quantitative dependence on $\norm{\nabla_{\sym} b}_{L^2}$, $\norm{\div b}_{L^\infty}$ and $t\in [0,T]$.

\subsection*{Regularity of Lagrangian flows}

As we already pointed out, starting from \cite{CheegerColding96}, flows of vector fields with $L^2$ integrability bounds on their derivatives have played a fundamental role in the Geometric Analysis of spaces with lower Ricci curvature bounds. This is basically due the fact that, despite the smoothness of the objects involved, Bochner's inequality naturally guarantees (only) quantitative $L^2$ Hessian bounds on (harmonic) functions in this framework. Thus, when seeking for \emph{stable} estimates, one is forced to develop some tools tailored for integral bounds, see \cite{ColdingNaber12,KapovitchWilking11,KapovitchLi18}.

From another perspective, flows of vector fields with Sobolev regularity on $\setR^n$ were also considered, starting from the seminal \cite{lions}. This field quickly developed, with strong motivations coming mainly from nonlinear problems in Fluid Mechanics and Kinetic Theory.

\medskip

	The regularity theory for flows of Sobolev velocity fields in the Euclidean setting has been pioneered by Crippa and De Lellis in \cite{CrippaDeLellis08}. They proved that, given a Sobolev velocity field $b:\setR^n\to\setR^n$ with bounded divergence, for any $\eps>0$ there exists a Borel set $E_{\eps}$ such that $\haus^n(B_R(0)\setminus E_{\eps})\le \eps$ and 
	\begin{equation}\label{eq:luslip}
		\abs{\XX(t,x)-\XX(t,y)}\le C(T,\eps,\norm{\nabla b}_{L^1(L^2)})\abs{x-y}\, ,\quad\text{for any $x,y\in E_{\eps}$ and $0\le t\le T$}\, .
	\end{equation} 
	This \emph{Lusin-Lipschitz regularity} estimate is weaker than the classical 
	\begin{equation}\label{eq:lipintime}
		\Lip(\XX_t)\le e^{t\Lip(b) }\, , \quad\text{for any } t\ge 0\, ,
	\end{equation}
    holding for the flow of Lipschitz velocity fields.

	\medskip

	In \cite{BrueSemolaahlfors,BrueSemola18}, the first and third authors have proven some versions of \eqref{eq:luslip} in the non-smooth non flat setting of $\RCD(K,N)$ spaces (see \cite[Theorem 2.20]{BrueSemola18}) and used them to show deep structural results for these spaces.
	These estimates, however, despite their strength and usefulness, did not have the expected behaviour with respect to the time variable, making difficult the application of the result, to some extent. 
	More precisely, the issue is that the constant $C$ appearing in the counterparts of \eqref{eq:luslip} in \cite[Theorem 2.20]{BrueSemola18} lacked the expected behaviour with respect to time. Nevertheless, in view of \eqref{eq:lipintime}, it would be desirable to prove estimates like \eqref{eq:luslip} with constants $C$ of the form
	\begin{equation}
		C=1 + t\, C(\eps, T, \norm{\nabla b}_{L^1(L^2)})\, .
	\end{equation} 
	
This is precisely the main goal of this paper.
We recover the natural rate with respect to time in the regularity estimates for RLFs of Sobolev vector fields on $\RCD$ spaces. This will be crucial for some forthcoming developments of the theory \cite{CaputoGigliPasqualetto21} and it is achieved by combining the techniques of \cite{BrueSemola18} and \cite{Deng20}.
\medskip

We will restrict our investigation to \textit{noncollapsed} $\RCD(K,N)$ spaces (see \cite{DePhilippisGigli17,Kitabeppu18} after \cite{CheegerColding97}), i.e. metric measure spaces $(X,\dist,\haus^N)$ satisfying the $\RCD(K,N)$ condition when equipped with the $N$-dimensional Hausdorff measure $\haus^N$, for some $N\in \setN$.\\ 
The reason why we restrict to noncollapsed structures is that they enjoy stronger structural results which allow us to compare the distance functions and Green functions at infinitesimal scales, see \autoref{sec:stabGreen}. Let us recall that Alexandrov spaces and non collapsed Ricci limits are noncollapsed $\RCD$ spaces.

%

\medskip

Before stating the main result we need to introduce a notion of lower/upper approximate slope.

\begin{definition}[lower/upper approximate slope]\label{def:lowuppslope}
	Let $F:X\to X$ be a Borel map. We say that $x\in X$ is a regular point for $F$ if there exists a measurable set $E\subset X$ with density $1$ at $x$ such that $x\in E$ and $F\restr_E$ is Lipschitz continuous. For any regular point $x\in X$ we set
	\begin{equation*}
		\ap_-\abs{DF}(x):=\liminf_{y\in E,\ y\to x}\frac{\dist(F(x),F(y))}{\dist(x,y)}
		\quad
		\text{and}
		\quad
		\ap_+ \abs{DF}(x):=\limsup_{y\in E,\ y\to x}\frac{\dist(F(x),F(y))}{\dist(x,y)}\, .
	\end{equation*}
	We call, respectively, \emph{lower/upper approximate slope} of $F$ at $x\in X$ the nonnegative number $\ap_-\abs{DF}(x)$/$\ap_+\abs{DF}(x)$.	
\end{definition}

\begin{remark}\label{rm:wellposedness}
	Relying on the locally doubling property of $\RCD(K,N)$ spaces, one can easily check that \autoref{def:lowuppslope} does not depend on the particular choice of the set $E\ni x$ with density $1$ at $x$. 
\end{remark}

\begin{remark}\label{rm:smoothcase}
	When $(X,\dist)$ is a smooth Riemannian manifold with the distance induced by the Riemannian metric and $F:X\to X$ is differentiable at $x$, then the upper and lower slopes of $F$ at $x$ correspond, respectively, to the operator norm of $\di F(x)$ and to 
	\begin{equation*}
		\inf_{v\in T_xX\, , \, v\neq 0}\frac{\norm{\di F(x)v}_{F(x)}}{\norm{v}_x}\, .
	\end{equation*}   
\end{remark}

We briefly recall that a point $x\in X$ is said to be regular if the density
   	\begin{equation}\label{density}
   	\theta(x):=	\lim_{r\to 0}\frac{\haus^N(B_r(x))}{\omega_N r^N}\, ,
   	\end{equation}
which exists at any point and in general belongs to $(0,1]$,  satisfies $\theta(x)=1$. By volume convergence and volume rigidity, see \cite[Corollary 1.7]{DePhilippisGigli17} and \cite{CheegerColding97},  this amounts to say that the tangent cone at $x\in X$ is unique and Euclidean of dimension $N$.

Below we state the main result of this note.

\begin{theorem}\label{th:main}
	Let us fix $N\in \setN$, $K\in \setR$ and $T,R>0$.
	Let $(X,\dist,\haus^N)$ be an $\RCD(K,N)$ m.m.s. and $p\in X$ be fixed. For any  $b\in L^2([0,T]; H^{1,2}_{C,s}(TX))$ supported on $B_R(p)$ with $b, \div b\in L^\infty$, there exists a unique Regular Lagrangian flow $\XX_{s,t}$ satisfying the following property. For any $0\le s<T$, for $\haus^N$-a.e. $x\in B_R(p)$ we have that $\XX_{s,t}(x)\in X$ is a regular point and 
	\begin{align}\label{eq:main}
		e^{-2\int_s^tg_r(\XX_{s,r}(x))\di r}\le \ap_-&\abs{D\XX_{s,t}}(x)\\
		&\le \ap_+\abs{D\XX_{s,t}}(x) 
		\le e^{2\int_s^tg_r(\XX_{s,r}(x))\di r}\, , \nonumber
	\end{align}
	for any $t\in [s,T]$, where $g$ is a nonnegative function satisfying
	\begin{equation*}
		\int_0^T\norm{g_r}_{L^2}\di r \le C(B_R(p),K,N)\left\lbrace\norm{\nabla_{\sym} b}_{L^2}+T\norm{\div b}_{L^{\infty}}\right\rbrace\, .
	\end{equation*}
	
	Moreover, when $b$ does not depend on time, there exists a nonnegative function $h\in L^2(X,\haus^N)$ such that
	\begin{equation*}
		\norm{h}_{L^2}\le C(B_R(p),K,N)\left\lbrace\norm{\nabla_{\sym} b}_{L^2}+\norm{\div b}_{L^{\infty}}\right\rbrace
	\end{equation*}
	and, for $\haus^N$-a.e. $x\in B_R(p)$,
	\begin{equation}\label{eq:mainresultautonomous}
		e^{-th(x)}\le \ap_-\abs{D\XX_t}(x) \le \ap_+\abs{D\XX_t}(x) \le e^{th(x)}
		\quad \text{for any $t\in [0,T]$}\, .
	\end{equation}
\end{theorem}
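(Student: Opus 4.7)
The plan is to combine the Crippa–De Lellis/Brué–Semola Lusin–Lipschitz strategy with the Green function stability techniques of \cite{Deng20}, which are available precisely because we work in the noncollapsed setting. The estimates in \cite{BrueSemola18} already give a two-sided $\ap_\pm$-bound of a similar shape, but with an additive-in-$t$ rather than exponential-in-$t$ dependence; the role of the Green function comparison is to sharpen the infinitesimal step so that the time-integration exponentiates cleanly and produces the sharp constant $e^{\pm 2\int_s^t g_r(\XX_{s,r}(x))\di r}$.

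First I would introduce the control function. Let $M$ denote a localized Hardy–Littlewood maximal operator on $(X,\dist,\haus^N)$, and set $g_r:=M(\abs{\nabla_{\sym} b_r})+\norm{\div b_r}_{L^{\infty}}$. Since $b_r$ is supported in $B_R(p)$, the weak-type $(2,2)$ maximal inequality available on $\RCD(K,N)$ spaces (as used in \cite{BrueSemola18}) gives $\norm{M(\abs{\nabla_{\sym} b_r})}_{L^2}\le C(B_R(p),K,N)\norm{\nabla_{\sym} b_r}_{L^2}$, and integrating in $r\in[0,T]$ produces
\begin{equation*}
\int_0^T\norm{g_r}_{L^2}\di r\le C(B_R(p),K,N)\bigl(\norm{\nabla_{\sym} b}_{L^2}+T\norm{\div b}_{L^{\infty}}\bigr),
\end{equation*}
the factor $T$ being the price for carrying the divergence term uniformly in $r$.

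The main step is the Grönwall argument. Because $(X,\dist,\haus^N)$ is noncollapsed the regular set has full $\haus^N$-measure, so by the compressibility bound \eqref{eq:divergence bound} the set of $x\in B_R(p)$ whose trajectory $r\mapsto \XX_{s,r}(x)$ stays in the regular set for all $r\in[s,t]$ has full measure; in particular $\XX_{s,t}(x)$ is a regular point for a.e.\ $x$. For a typical nearby pair $(x,y)$, one picks a regular pole $q$ close to the trajectory and uses the renormalized Green function $G_q$ as a quasi-linear radial coordinate à la Colding–Naber; the stability results from \cite{Deng20} give quantitative $L^2$ bounds on $\Hess G_q$ together with a sharp comparison between $\abs{\nabla G_q}$ and the Euclidean rescaling of $\dist(\cdot,q)$. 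Differentiating along the flow via \autoref{def:Regularlagrangianflow}(3),
\begin{equation*}
\frac{\di}{\di r}\bigl(G_q(\XX_{s,r}(x))-G_q(\XX_{s,r}(y))\bigr)=b_r(\nabla G_q)(\XX_{s,r}(x))-b_r(\nabla G_q)(\XX_{s,r}(y)),
\end{equation*}
and averaging $q$ over a small ball near the midpoint of the trajectory, I would estimate the right hand side by testing the defining identity \eqref{eq:Sobvectfield} against $\nabla G_q$; the symmetric covariant derivative then controls the incremental ratio by $2g_r(\XX_{s,r}(x))\dist(\XX_{s,r}(x),\XX_{s,r}(y))$, up to error terms that vanish at infinitesimal scale thanks to the regularity of $q$. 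Integration of the resulting differential inequality delivers the two-sided bound \eqref{eq:main}.

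In the autonomous case the single function $h$ is obtained by maximizing the trajectory average,
\begin{equation*}
h(x):=\sup_{0<t\le T}\frac{1}{t}\int_0^t g(\XX_r(x))\di r,
\end{equation*}
so that $\int_0^t g(\XX_r(x))\di r\le t\,h(x)$ by construction and $\norm{h}_{L^2}\le C\norm{g}_{L^2}$ follows from the one-dimensional Hardy–Littlewood maximal inequality applied along trajectories, combined with \eqref{eq:divergence bound} in an $L^2$-duality argument. The main obstacle is the rigorous execution of the Grönwall step: one must legitimize the differentiation of $G_q\circ\XX_{s,\cdot}(x)$ (which forces a cut-off of $G_q$ to make it globally Lipschitz, since $G_q$ is singular at $q$), and one must calibrate the Green function stability so that the errors incurred when replacing $\abs{\nabla G_q}^2$ by the correct rescaled $\dist(\cdot,q)$-object do not spoil the sharp exponential rate in $t$. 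This calibration is exactly the technical content that the synthetic Colding–Naber machinery of \cite{Deng20} makes available on noncollapsed $\RCD$ spaces.
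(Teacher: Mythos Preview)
Your proposal has the right high-level architecture (Green-function surrogate for distance, Grönwall along trajectories, maximal function for the autonomous bound), and your treatment of the autonomous case via $h(x)=\sup_{0<t\le T}\frac{1}{t}\int_0^t g(\XX_r(x))\di r$ matches the paper exactly. But two points are genuine gaps.

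\textbf{Regularity for \emph{all} $t$.} You write that ``by the compressibility bound \eqref{eq:divergence bound} the set of $x$ whose trajectory stays in the regular set for all $r$ has full measure''. Compressibility only gives that $(\XX_{s,r})_*\haus^N$ is absolutely continuous, so by Fubini one gets that for $\haus^N$-a.e.\ $x$, $\XX_{s,r}(x)$ is regular for $\leb^1$-\emph{a.e.}\ $r$. Upgrading this to \emph{every} $r\in[s,T]$ is not free: a priori nothing prevents a trajectory from hitting the singular set on a null set of times. This upgrade is exactly \autoref{measurecontinuity} in the paper, and it requires a separate argument (a bootstrap controlling the ratio $\haus^N(B_r(\XX_{t_1}(x)))/\haus^N(B_r(\XX_{t_2}(x)))$ uniformly in $r$ via the distance-distortion machinery of \cite{Deng20}, yielding continuity of $t\mapsto\theta(\XX_t(x))$). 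Without it your infinitesimal estimate cannot be stated at every $t$.

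\textbf{The Grönwall step and the density ratio.} The paper does not use a single-pole Green function $G_q$ with averaging over $q$; it uses the two-point $\lambda$-Green function $G^{\lambda}(x,y)$ and sets $\dist_{G^\lambda}:=1/G^\lambda$. The maximal estimate (\autoref{prop: barG-maximal estimate}) gives directly
\[
\abs{b\cdot\nabla G^\lambda_x(y)+b\cdot\nabla G^\lambda_y(x)}\le G^\lambda(x,y)\bigl(g(x)+g(y)\bigr),
\]
so that $r\mapsto \log \dist_{G^\lambda}(\XX_{s,r}(x),\XX_{s,r}(y))$ obeys a clean differential inequality and one gets the two-sided exponential bound on $\dist_{G^\lambda}$-ratios for $\haus^N\times\haus^N$-a.e.\ pair (\autoref{prop:esttrajbar}). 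The passage from $\dist_{G^\lambda}$ to $\dist$ at infinitesimal scale is then done via \autoref{corollary:Green convergence}, which says $\dist(x,y)^{N-2}G^\lambda(x,y)\to (\theta(x)\omega_N N(N-2))^{-1}$ as $y\to x$; this produces an unavoidable factor $\theta(\XX_{s,t}(x))/\theta(x)$ in the slope bound. Killing that factor is precisely where the ``regular for all $t$'' result above is used. Your sketch with $G_q$ and ``error terms that vanish at infinitesimal scale thanks to the regularity of $q$'' hides this density ratio rather than dealing with it, and the averaging-over-$q$ scheme would face the same obstruction. Finally, note that the paper's Green-function estimates require $N\ge 3$ plus a mild growth condition (\autoref{Assumption}), handled by a tensorization with $\setR^3$ at the end; your write-up should account for this reduction as well.
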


Notice that both the left and right hand side of \eqref{eq:mainresultautonomous} approach $1$ linearly as $t\to 0$, therefore providing a counterpart of \eqref{eq:lipintime} over noncollapsed $\RCD$ spaces and under Sobolev regularity assumptions on the vector field.\\ 
Let us stress that the pointwise nature (instead of almost-everywhere) w.r.t. time of the estimates is a subtle point, and will require indeed some nontrivial arguments.

\medskip

 Starting from \autoref{th:main} and employing again some of the techniques introduced in \cite{ColdingNaber12,KapovitchWilking11}, it is possible to obtain a global regularity estimate, which improves upon those obtained in \cite{BrueSemola18}, since it is H\"older continuous with respect to time.

	\begin{theorem}\label{globallipthm}
		Fix $N\in \setN$, $K\in \setR$ and $H,D, T,R>0$.
		Let $(X,\dist,\haus^N)$ be an $\RCD(K,N)$ m.m.s. and let $p\in X$ be fixed. Let  $b\in L^2([0,T]; H^{1,2}_{C,s}(TX))$ be supported on $B_R(p)$ with $\norm{b}_{L^\infty} + \norm{\div b}_{L^\infty}< D$ and $\norm{\nabla_{\sym}b}_{L^2}<H$. Then, for any $\eps > 0$, there exist $S \subseteq B_{R}(p)$ and $\omega_0(K, N, B_{R}(p), H, D, T, \eps)$, $\alpha(N)$, $C_0(K, N, B_{R}(p), H, D, T, \eps) > 0$ so that
		\begin{equation}
			\haus^N(B_R(p)\setminus S)<\eps\, ,
		\end{equation}
		and for any $x, y \in S$ and any $0 \leq t_1 < t_2 \leq T$ with $t_2 - t_1 \leq \omega_0$, it holds
		\begin{equation}
			1-C_0(t_2-t_1)^{\alpha}\le \frac{\dist(\XX_{t_2}(x), \XX_{t_2}(y))}{\dist(\XX_{t_1}(x), \XX_{t_1}(y))} \leq 1+C_0(t_2-t_1)^{\alpha}\, .
		\end{equation}
	Here $\XX$ denotes the regular Lagrangian flow of $b$.
	\end{theorem}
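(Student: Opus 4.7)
The plan is to combine the pointwise slope estimate of Theorem~\ref{th:main} with a segment inequality of Colding--Naber / Kapovitch--Wilking type (in its $\RCD$ version from \cite{Deng20}), and to extract a H\"older-in-time behaviour via a Cauchy--Schwarz argument on the $L^2_tL^2_x$ structure of the error function $g$ furnished by Theorem~\ref{th:main}.

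The first step is to exploit the semigroup identity $\XX_{t_2}=\XX_{t_1,t_2}\circ \XX_{t_1}$ and apply Theorem~\ref{th:main} to the restricted flow $\XX_{t_1,t_2}$: for $\haus^N$-a.e.\ $z$ one has
\begin{equation*}
e^{-\Phi_{t_1,t_2}(z)}\le \ap_-\abs{D\XX_{t_1,t_2}}(z)\le \ap_+\abs{D\XX_{t_1,t_2}}(z)\le e^{\Phi_{t_1,t_2}(z)},
\end{equation*}
with $\Phi_{t_1,t_2}(z):=2\int_{t_1}^{t_2}g_r(\XX_{t_1,r}(z))\,\di r$. A Cauchy--Schwarz in time together with the two-sided compressibility bound \eqref{eq:divergence bound} and the $L^2$ control coming from Theorem~\ref{th:main} yields $\norm{\Phi_{t_1,t_2}\circ \XX_{t_1}}_{L^2(B_R(p))}\le C(t_2-t_1)^{1/2}$. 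A dyadic Chebyshev argument over a countable dense family of time pairs, followed by an extension to all admissible pairs via the continuity of $r\mapsto \XX_{t_1,r}$, then produces a set $S_1\subseteq B_R(p)$ with $\haus^N(B_R(p)\setminus S_1)<\eps/2$ on which $\Phi_{t_1,t_2}(\XX_{t_1}(x))\le C_0(t_2-t_1)^{\alpha}$ holds simultaneously for every $(t_1,t_2)$ with $t_2-t_1\le \omega_0$ and some $\alpha=\alpha(N)$ strictly smaller than $1/2$; the role of $\omega_0$ is precisely to keep these increments below a universal constant, so that the exponential bounds linearise.

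The heart of the argument is to upgrade this infinitesimal information into a global metric comparison between $\dist(\XX_{t_1,t_2}(z),\XX_{t_1,t_2}(w))$ and $\dist(z,w)$, for $z=\XX_{t_1}(x)$ and $w=\XX_{t_1}(y)$. For this I would invoke the $\RCD$ segment inequality developed in \cite{Deng20}: for $\haus^N\otimes\haus^N$-a.e.\ pair $(z,w)$ the logarithmic ratio of distances is dominated by the integral of $\Phi_{t_1,t_2}$ along a nearly minimising geodesic from $z$ to $w$, and the product-measure $L^2$-norm of this integral is controlled by $\norm{\Phi_{t_1,t_2}}_{L^2}$. A second Chebyshev removal, again uniformised over a countable dense family of time pairs and then lifted to all pairs by continuity, refines $S_1$ down to the target set $S$; exponentiating yields the two-sided bound in the statement. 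The main obstacle I anticipate is precisely this double uniformisation in $(t_1,t_2)$: the segment inequality a priori produces an exceptional set of pairs $(x,y)$ depending on the chosen time pair, and assembling all of these into a single small exceptional set valid for every pair, while still preserving a dimensional H\"older exponent $\alpha(N)$, forces careful dyadic bookkeeping and a controlled loss with respect to the exponent gained from Cauchy--Schwarz.
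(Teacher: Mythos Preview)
Your proposal has a genuine gap at the step where you pass from the infinitesimal slope bound of Theorem~\ref{th:main} to a global distance comparison by ``integrating $\Phi_{t_1,t_2}$ along a nearly minimising geodesic''. The inequality you implicitly assume,
\[
\left|\log\frac{\dist(\XX_{t_1,t_2}(z),\XX_{t_1,t_2}(w))}{\dist(z,w)}\right|\;\le\; C\int_0^1 \Phi_{t_1,t_2}\bigl(\gamma_{z,w}(s)\bigr)\,\di s,
\]
is \emph{not} a consequence of the approximate slope bound, and it is not what the segment inequality says. The segment inequality controls averages of $\int_0^1 h(\gamma_{z,w}(s))\,\di s$ over pairs $(z,w)$ in terms of $\int h$; it does not supply the fundamental-theorem-of-calculus step that relates the distance ratio to the integral of a pointwise slope along $\gamma_{z,w}$. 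That step would require the flow to be differentiable along the specific geodesic $\gamma_{z,w}$, but Theorem~\ref{th:main} only gives the approximate slope at $\haus^N$-a.e.\ point, via limits through density-one sets, and a geodesic is a $\haus^N$-null set. Even with the Lusin--Lipschitz property of Proposition~\ref{prop:LusinLipschitzFlows}, there is no reason for $\gamma_{z,w}$ to stay inside the good Lusin set.

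This is exactly why the paper does \emph{not} pass through Theorem~\ref{th:main}. Instead, it works directly with the distance distortion function $dt_r^{\XX}$ of \cite{KapovitchWilking11,Deng20} (Corollary~\ref{corollary:dtcontrolcor}), whose time derivative is already expressed as an integral of $|\nabla_{\sym}b_t|$ along geodesics between flowed points; the segment inequality is then applied to $|\nabla_{\sym}b_t|$, not to your $\Phi$. A bootstrap over scales (Lemma~\ref{globalliproughlem}) first shows that at every scale $r$ a large portion of $B_r(\XX_{t_1}(x))$ stays inside $B_{4r}(\XX_{t_2}(x))$; a refinement (Lemma~\ref{globallipfinelem}) upgrades this to a set of density $1-(t_2-t_1)^{\beta}$ on which displacements are controlled to order $(t_2-t_1)^{\alpha}r$. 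The final proof then fixes $x,y\in S$, applies the fine lemma at scale $2r=2\dist(\XX_{t_1}(x),\XX_{t_1}(y))$ around both $\XX_{t_1}(x)$ and $\XX_{t_1}(y)$, and picks a common point $z$ near $\XX_{t_1}(y)$ in the intersection of the two good sets. Note that the good set $S$ is fixed once and for all via uniform bounds on maximal functions of $g$ and $|\nabla_{\sym}b|$ along the trajectory (see the construction of $E_1,E_2,E_3$ in the proof of Theorem~\ref{measurecontinuity}); no dyadic countable-union over time pairs is needed.
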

    
\medskip

To conclude this introductory section, let us comment again on the main new points of the present note. In the setting of smooth Riemannian manifolds with lower Ricci curvature bounds, the previous contributions closest to this topic are the estimates in \cite{KapovitchWilking11,KapovitchLi18}. Therein, following a common pattern within this field, quantitative regularity estimates were obtained via bootstrap along scales starting from qualitative regularity estimates at small scales, that are guaranteed in turn by smoothness. 

Working in the framework of $\RCD$ spaces, there is the necessity to find alternative arguments to start the bootstrap arguments, since neither smoothness is available, nor approximation with smooth objects is possible.
Here we overcome these difficulties combining in a new way the ideas of \cite{Deng20} to handle the time-like behaviour with those in \cite{BrueSemolaahlfors,BrueSemola18} to handle the spatial behaviour of Regular Lagrangian flows.

\subsection*{Plan of the paper}
The remainder of the paper is organised as follows. In \autoref{sec:stabGreen}, which is of independent interest, we deal with asymptotic estimates and converge of Green functions on $\RCD$ spaces. Then \autoref{sec:regflows} collects some material about regularity of Lagrangian flows over $\RCD$ spaces, formulated in terms of Green functions. The material is mainly taken from \cite{BrueSemola18}. In \autoref{sec:almostsurelyregular} we prove that trajectories of Regular Lagrangian flows pass only through regular points starting from almost every point. The last two sections are dedicated to the proofs of \autoref{th:main} and \autoref{globallipthm}, respectively.

\subsection*{Acknowledgements}
The first author is supported by the Giorgio and Elena Petronio
Fellowship at the Institute for Advanced Study.

The third author is supported by the European Research Council (ERC), under the European’s Union Horizon 2020 research and innovation programme, via the ERC Starting Grant “CURVATURE”, grant agreement No. 802689.

Most of this work was developed while the first and third authors were PhD students
at Scuola Normale Superiore. They wish to express their gratitude to this institution for
the excellent working conditions and the stimulating atmosphere.

The authors are grateful to Nicola Gigli for suggesting to them the possibility to sharpen the estimates in \cite{BrueSemola18,BrueSemolaahlfors} and for several stimulating conversations. They are grateful to Andrea Mondino and Enrico Pasqualetto for carefully reading a preliminary version of the note. The second author also thanks Vitali Kapovitch for many helpful discussions.

\section{Stability of Green functions}\label{sec:stabGreen}
The Green function of the Laplacian is a very classical object that, since its introduction in 1830, has been widely used in the study of linear PDEs 
and in geometric analysis. Let us just mention \cite{Colding12,Ding02} for some recent instances close to the topics of the present note.\\
Our interest for this tool comes from the regularity theory for non-smooth flows developed in \cite{BrueSemolaahlfors,BrueSemola18}, where the inverse of the Green function has been used as a replacement of the distance function to measure regularity. Green functions have two remarkable properties that make them more suitable than distance functions for this analysis: they solve equations and they are regular.

\medskip

Given an $\RCD(K,N)$ m.m.s. $(X,\dist,\meas)$ and $\lambda\ge 0$ we define the $\lambda$-Green function by 
\begin{equation}\label{eq:lambdaGreenfunction}
	G^{\lambda}_x(y)=G^{\lambda}(x,y):=\int_0^{\infty} e^{-\lambda t} p_t(x,y)\di t
	\quad \text{for any }x,y\in X,\ \lambda\ge 0\, ,
\end{equation}
where $p_t:X\times X\to[0,+\infty)$ is the so-called \textit{heat kernel} over $(X,\dist,\meas)$. At least formally, $G^{\lambda}$ is a fundamental solution of the operator $-\Delta +\lambda I$. Observe that, in general, the integral in \eqref{eq:lambdaGreenfunction} could be infinite.\\
Due to its particular relevance and in accordance with the classical terminology, when there is no risk of confusion we shall indicate by Green function the $0$-Green function.
\medskip

Let us recall that in \cite{JangLiZhang} 
the classical lower and upper Gaussian heat kernel bounds for manifolds with lower Ricci bounds, originally due to Li and Yau, have been generalised to $\RCD(K,N)$ spaces.
There exist constants $C_1=C_1(K,N)>1$ and $c=c(K,N)\ge0$ such that
\begin{equation}\label{eq:kernelestimate}
	\frac{1}{C_1\meas(B(x,\sqrt{t}))}\exp\left\lbrace -\frac{\dist^2(x,y)}{3t}-ct\right\rbrace\le p_t(x,y)\le \frac{C_1}{\meas(B(x,\sqrt{t}))}\exp\left\lbrace-\frac{\dist^2(x,y)}{5t}+ct \right\rbrace\, ,  
\end{equation}
for any $x,y\in X$ and for any $t>0$. Moreover it holds
\begin{equation}\label{eq:gradientestimatekernel}
	\abs{\nabla p_t(x,\cdot)}(y)\le \frac{C_1}{\sqrt{t}\meas(B(x,\sqrt{t}))}\exp\left\lbrace -\frac{\dist^2(x,y)}{5t}+ct\right\rbrace \quad\text{for $\meas$-a.e. $y\in X$}\, ,
\end{equation}
for any $t>0$ and for any $x\in X$.
We remark that in \eqref{eq:kernelestimate} and \eqref{eq:gradientestimatekernel} above one can take $c=0$ whenever $(X,\dist,\meas)$ is an $\RCD(0,N)$ m.m.s..

\begin{remark}\label{remark:improvement heat kernel estimates}
	A simple scaling argument shows that $C_1$ and $c$ in \eqref{eq:kernelestimate} and \eqref{eq:gradientestimatekernel} satisfy $C_1(K,N)=C_1(N)$ and $c(K,N)=c(N)|K|$. This improves \eqref{eq:kernelestimate} and \eqref{eq:gradientestimatekernel} only when $K$ is negative.

	Indeed, setting $r=1/\sqrt{-K}$ and denoting by $p^{r,x_0}_t(x,y)$ the heat kernel in the $\RCD(-1,N)$ space $\left(X,r^{-1}\dist,\frac{\meas}{\meas(B_r(x_0))}\right)$, it holds
	\begin{equation}\label{eq:heat kernel scaled}
		\meas(B_r(x_0))p_{r^2t}(x,y)=p^{r,x_0}_t(x,y)\, 
		\quad\text{for any }x,y\in X,\ t\ge 0\, .
	\end{equation}
	It is then enough to apply \eqref{eq:kernelestimate} and \eqref{eq:gradientestimatekernel} to $p^{r,x_0}_{t/r^2}(x,y)$ and use the Bishop-Gromov inequality:
	\begin{equation}\label{eq:Bishop-Gromov}
		\frac{\meas(B_R(x))}{v_{K,N}(R)}\le\frac{\meas(B_r(x))}{v_{K,N}(r)}
		\quad \text{for any $0<r<R$ and $x\in X$}\, .
	\end{equation}
    Here $v_{K,N}(r)$ denotes the measure of the ball of radius $r$ on the model space with parameters $K$ and $N$ (see \cite{Villani09}).
\end{remark}

For technical reasons, throughout this section we work under the following

\begin{assumption}\label{Assumption}
	$(X,\dist,\meas)$ is a product between an $\RCD(K,N-3)$ m.m.s. and a Euclidean factor $(\setR^3,\dist_{\mathrm{eucl}},\leb^3)$, for some $4<N<\infty$.
\end{assumption}

Building upon \eqref{eq:kernelestimate} and \eqref{eq:gradientestimatekernel} one can check that, for $\lambda\ge\lambda(K)$, for any $x\in X$, $G^{\lambda}_x, |\nabla G^{\lambda}_x|\in L^1_{\loc}(X,\meas)$ and
$\Delta G_x^{\lambda}=-\delta_x+\lambda G_x^{\lambda}$, see \cite[subsection 2.3]{BrueSemola18} for further explanations.
\medskip

We refer to \cite{AmbrosioHonda17,GigliMondinoSavare15} for the relevant background about convergence of functions and Sobolev spaces along converging sequences of $\RCD(K,N)$ spaces.
\medskip

Below we state the main convergence result for Green functions along converging sequences of $\RCD(K,N)$ spaces and then we specialize it to the case of tangent cones.

\begin{proposition}\label{prop:Green convergence}
	Let $(X,\dist,\meas)$ be an $\RCD(K,N)$ m.m.s. satisfying \autoref{Assumption} and let $r_i\downarrow 0$ be a sequence of radii such that
	\begin{equation*}
		\lim_{i\to \infty} \left(X,r_i^{-1}\dist,\frac{\meas}{\meas(B_{r_i}(x_0))},x_0\right)=(Y,\rho,\mu,y)
		\quad\text{in the pmGH topology}\, .
	\end{equation*}
	Denoting by $G^{\lambda}$ the $\lambda$-Green function in $(X,\dist,\meas)$ and by $G$ the $0$-Green function in $(Y,\rho,\mu,y)$ (see \eqref{eq:lambdaGreenfunction}) one has
	\begin{equation}\label{eq: Green convergence}
		\lim_{i\to \infty} r_i^{-2}\meas(B_{r_i}(x_0))G^{\lambda}(x_i,y_i)\to G(x_{\infty},y_{\infty})\, ,
	\end{equation}
	for $X_i\times X_i\ni (x_i,y_i)\to (x_{\infty},y_{\infty})\in Y\times Y$ and $\lambda\ge c|K|$, where the constant $c$ is the one from \eqref{eq:kernelestimate} and \eqref{eq:gradientestimatekernel}.	
\end{proposition}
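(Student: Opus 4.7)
The plan is to recast $r_i^{-2}\meas(B_{r_i}(x_0))G^\lambda(x_i,y_i)$ as a time integral of the heat kernel in the rescaled space, then pass to the limit by dominated convergence using pointwise convergence of heat kernels plus a uniform Gaussian majorant. By the scaling identity \eqref{eq:heat kernel scaled} and the change of variable $s=r_i^2 t$ in the definition \eqref{eq:lambdaGreenfunction}, one first writes
\[
r_i^{-2}\meas(B_{r_i}(x_0))G^\lambda(x_i,y_i)=\int_0^\infty e^{-\lambda r_i^2 t}\, p^{r_i,x_0}_t(x_i,y_i)\di t,
\]
where $p^{r_i,x_0}_t$ is the heat kernel of the rescaled m.m.s.\ $(X,r_i^{-1}\dist,\meas/\meas(B_{r_i}(x_0)))$, which is $\RCD(Kr_i^2,N)$ and pmGH-converges to $(Y,\rho,\mu,y)$.

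For pointwise convergence of the integrand, I would invoke Mosco convergence of Cheeger/Dirichlet energies along pmGH convergent sequences of $\RCD(K,N)$ spaces (following Gigli--Mondino--Savar\'e and Ambrosio--Honda), which upgrades to locally uniform convergence of the heat kernels $p^{r_i,x_0}_t(x_i,y_i)\to p^Y_t(x_\infty,y_\infty)$ on $(0,\infty)\times Y\times Y$. Since $\lambda r_i^2\to 0$, the factor $e^{-\lambda r_i^2 t}$ tends to $1$ uniformly on bounded $t$-intervals, so the integrands converge pointwise to $p^Y_t(x_\infty,y_\infty)$. To pass the limit under the integral I need an $i$-independent integrable majorant; applying \eqref{eq:kernelestimate} in each rescaled space and using the improved constants of \autoref{remark:improvement heat kernel estimates} gives
\[
p^{r_i,x_0}_t(x_i,y_i)\le \frac{C_1(N)}{\mu_i(B^i_{\sqrt{t}}(x_i))}\exp\left(-\frac{\dist_i(x_i,y_i)^2}{5t}+c(N)|K|r_i^2 t\right),
\]
and the positive exponential time factor is absorbed by $e^{-\lambda r_i^2 t}$ whenever $\lambda\ge c(N)|K|$, which is precisely the standing hypothesis $\lambda\ge c|K|$.

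At this point \autoref{Assumption} plays a decisive role: since $(X,\dist,\meas)$ splits off an $\setR^3$ factor, so does each rescaled space isometrically and volume-preservingly, and hence the pmGH limit $Y$ also splits off an $\setR^3$ factor. Together with Bishop--Gromov \eqref{eq:Bishop-Gromov}, this yields the uniform lower bound $\mu_i(B^i_r(x_i))\gtrsim r^3$ for every $r>0$, which ensures that the majorant is integrable at $t=\infty$ (decay like $t^{-3/2}$) and integrable at $t=0$ whenever $x_\infty\neq y_\infty$ (via the Gaussian factor). Dominated convergence then produces \eqref{eq: Green convergence}.

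The main obstacle I anticipate is precisely this domination step: verifying that the Gaussian upper bound is genuinely uniform in $i$ (which relies both on the rescaling-improved constants of \autoref{remark:improvement heat kernel estimates} and on the persistence of the $\setR^3$ factor in the limit) and correctly handling the diagonal case $x_\infty=y_\infty$, where both sides of \eqref{eq: Green convergence} may equal $+\infty$. The latter should be addressed by a truncation $\int_\varepsilon^\infty$ followed by $\varepsilon\downarrow 0$ via monotone convergence, or by interpreting the limit as extended-valued in $[0,+\infty]$.
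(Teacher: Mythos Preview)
Your proposal is correct and follows essentially the same route as the paper: rewrite $r_i^{-2}\meas(B_{r_i}(x_0))G^\lambda(x_i,y_i)$ as $\int_0^\infty e^{-\lambda r_i^2 t}p^{r_i,x_0}_t(x_i,y_i)\di t$ via \eqref{eq:heat kernel scaled}, invoke heat-kernel convergence for the pointwise limit, and apply dominated convergence with a majorant obtained from \eqref{eq:kernelestimate}, \autoref{remark:improvement heat kernel estimates}, the hypothesis $\lambda\ge c|K|$, Bishop--Gromov for small $t$, and the $\setR^3$ factor for large $t$. The only cosmetic differences are that the paper centers the volume ratio at the fixed base point $x_0$ rather than at $x_i$ (harmless by doubling, since $x_i$ stays at bounded rescaled distance from $x_0$), and that the paper splits the majorant explicitly as $t^{-N/2}$ for $t<1$ and $t^{-3/2}$ for $t\ge 1$; your extra remark on the diagonal case $x_\infty=y_\infty$ is a point the paper leaves implicit.
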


\begin{corollary}\label{corollary:Green convergence}
	Let $(X,\dist,\haus^N)$ be a noncollapsed $\RCD(K,N)$ space satisfying \eqref{Assumption}. For $\lambda\ge c|K|$ and $x\in X$ one has
	\begin{equation}
		\lim_{y\to x} \dist(x,y)^{N-2}G^{\lambda}(x,y)=\frac{1}{\theta(x)\omega_N N(N-2)}\, ,
	\end{equation}
	where $\theta\in (0,1]$ is the density of $\haus^N$ at $x$, as defined in \eqref{density}.
\end{corollary}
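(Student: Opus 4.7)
The plan is to reduce to a computation on tangent cones via \autoref{prop:Green convergence}, using the fact that noncollapsed tangent cones are metric measure cones with explicitly computable Green functions from the apex.

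First I would fix a sequence $y_i \to x$ with $r_i := \dist(x,y_i)\downarrow 0$. Using the compactness of rescalings of $\RCD(K,N)$ spaces, after passing to a subsequence, the sequence $(X,r_i^{-1}\dist, \haus^N/\haus^N(B_{r_i}(x)), x)$ converges in pmGH to some pointed tangent cone $(Y,\rho,\mu,y_\infty)$, and we may arrange that $y_i \to y_\infty'$ for some $y_\infty'\in Y$ with $\rho(y_\infty,y_\infty')=1$. Applying \autoref{prop:Green convergence} gives
\begin{equation*}
 r_i^{-2}\haus^N(B_{r_i}(x))\,G^\lambda(x,y_i) \longrightarrow G(y_\infty,y_\infty'),
\end{equation*}
where $G$ is the $0$-Green function on the tangent cone. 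Combining this with the definition $\haus^N(B_{r_i}(x))/(\omega_N r_i^N)\to \theta(x)$ yields
\begin{equation*}
 \dist(x,y_i)^{N-2}\,G^\lambda(x,y_i) \longrightarrow \frac{G(y_\infty,y_\infty')}{\theta(x)\,\omega_N}.
\end{equation*}

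Next I would compute $G(y_\infty,y_\infty')$ on the tangent cone. By the noncollapsed structure theory (De Philippis--Gigli), $(Y,\rho,\mu)$ is a metric measure cone with apex $y_\infty$, and the normalization in the pmGH limit gives $\mu(B_r(y_\infty)) = r^N$ for every $r>0$. On such a cone, scaling invariance forces the Green function from the apex to be radial and homogeneous of degree $-(N-2)$, so $G(y_\infty, y) = a\,\rho(y_\infty,y)^{-(N-2)}$ for some constant $a$. The constant $a$ is fixed by integrating the defining equation $\Delta G = -\delta_{y_\infty}$ over a small ball $B_\epsilon(y_\infty)$: the divergence theorem together with the identity $\sigma(\partial B_r(y_\infty)) = N r^{N-1}$ (obtained by differentiating $\mu(B_r(y_\infty))=r^N$) gives
\begin{equation*}
 1 = -\int_{B_\epsilon(y_\infty)}\Delta G\,\di\mu = -G'(\epsilon)\,N\epsilon^{N-1} = N(N-2)\,a,
\end{equation*}
so $a = \frac{1}{N(N-2)}$ and thus $G(y_\infty,y_\infty') = \frac{1}{N(N-2)}$. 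Plugging this into the previous display gives the desired limit $\frac{1}{\theta(x)\omega_N N(N-2)}$ along the chosen subsequence; since the value depends only on $x$, not on the particular tangent cone or subsequence, the full limit exists and equals this value.

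The main obstacle is the explicit computation on the tangent cone in the second step: one needs the metric measure cone structure of noncollapsed tangent cones (a non-trivial theorem) together with a careful justification of the radial-Laplacian calculation in the synthetic setting---which can be done either via the above integration-by-parts argument using that $\mu(B_r(y_\infty))=r^N$, or by invoking the explicit cone heat kernel. A secondary issue is ensuring that the different tangent cones that may arise along different subsequences all produce the same Green-function value; this is automatic here because the cone normalization $\mu(B_r(y_\infty)) = r^N$ is the same for every tangent cone obtained through the rescaling above, making the constant $a$ cone-independent.
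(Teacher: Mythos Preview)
Your proposal is correct and follows essentially the same route as the paper: pass to a subsequence, blow up along $r_i=\dist(x,y_i)$, apply \autoref{prop:Green convergence}, and then read off the Green function on the tangent cone at distance $1$ from the tip. The only difference is cosmetic: the paper packages the cone computation as a separate \autoref{lemma:Green on cones} (citing Ding and Colding--Minicozzi), whereas you sketch the same formula directly via a divergence-theorem argument using $\mu(B_r(y_\infty))=r^N$; both land on $G(y_\infty,y_\infty')=\frac{1}{N(N-2)}$ and hence the stated limit.
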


\begin{remark}
Even though this will be not relevant for our purposes, let us point out that analogous conclusions hold when considering the limiting behaviour of the Green function $G$ on blow-downs (i.e. tangent cones at infinity instead of local tangent cones) of $\RCD(0,N)$ metric measure spaces $(X,\dist,\haus^N)$ with Euclidean volume growth for $N\ge 3$.
\end{remark}

\subsection{Proof of \autoref{prop:Green convergence}}
We recall a convergence result for heat kernels, referring the reader to \cite[Theorem 3.3]{AmbrosioHondaTewodrose17} for its proof.

\begin{lemma}\label{th:heat Kernel convergence}
	Let $\left((X_i,\dist_i,\meas_i,x_i)\right)_i$ be a sequence of $\RCD(K,N)$ m.m.spaces converging in the pmGH topology to $(X_{\infty},\dist_{\infty},\meas_{\infty},x_{\infty})$. Then the heat kernels $p^i$ of $X_i$ satisfy
	\begin{equation}
		\lim_{i\to \infty} p^i_{t_i}(x_i,y_i)=p^{\infty}_t(x,y)\, ,
	\end{equation}
	for any $X_i\times X_i\times (0,\infty)\ni (x_i,y_i,t_i)\to (x,y,t)\in X_{\infty}\times X_{\infty}\times (0,\infty)$, where $p^{\infty}$ denotes the heat kernel in $X_{\infty}$.
\end{lemma}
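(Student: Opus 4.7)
The plan is to combine two ingredients: convergence of the heat semigroups along pmGH-converging $\RCD(K,N)$ spaces, and uniform regularity of the heat kernels coming from the Gaussian and gradient bounds \eqref{eq:kernelestimate}--\eqref{eq:gradientestimatekernel}. Since pmGH convergence of $\RCD(K,N)$ spaces is known to induce Mosco convergence of the associated Cheeger energies (Gigli--Mondino--Savar\'e), the heat flows $P^i_{t_i}$ converge strongly to $P^\infty_t$: for any family of functions $f_i\in L^2(\meas_i)$ strongly $L^2$-converging to $f\in L^2(\meas_\infty)$, one has $P^i_{t_i}f_i \to P^\infty_t f$ strongly in $L^2$ (and the convergence is uniform on $[t_0,T]$ for any $0<t_0<T$). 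I would take this as my starting black box.

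Next I would use the uniform heat kernel bounds. By \eqref{eq:kernelestimate} and the Bishop--Gromov inequality, for every $t_0>0$ the family $\{p^i_s(\cdot,\cdot)\}_{s\in[t_0/2,2t_0]}$ is locally uniformly bounded around any converging pair $(x_i,y_i)\to (x_\infty,y_\infty)$. The gradient bound \eqref{eq:gradientestimatekernel}, together with the standard identity $\partial_t p_t(x,y) = \Delta_y p_t(x,y)$ and the semigroup property $p_{2s}(x,y) = \int p_s(x,z) p_s(z,y)\,\di \meas(z)$ (which gives an $L^\infty$-in-$y$ bound on $\partial_t p_{2s}(x,\cdot)$ via heat kernel bounds applied twice), yields equi-Lipschitz bounds in the spatial variables and equi-H\"older bounds in time, uniformly in $i$, on compact subsets of $X_i\times X_i\times(0,\infty)$. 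An Arzel\`a--Ascoli argument adapted to pmGH convergence (in the sense of Ambrosio--Honda's notion of uniform convergence of functions on varying spaces) then extracts a subsequence along which $p^i_{t_i}(x_i,y_i)\to q(x_\infty,y_\infty,t)$ for some continuous function $q$ on $X_\infty\times X_\infty\times (0,\infty)$.

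To identify $q$ with $p^\infty_t$, pick any $f\in \Cb(X_\infty)$ with compact support and any sequence $f_i\in \Cb(X_i)$ converging uniformly to $f$ with uniformly bounded supports (which is possible by standard pmGH machinery). On the one hand, by the semigroup convergence and continuity of $P^\infty_t f$,
\begin{equation*}
    P^i_{t_i} f_i(x_i) = \int p^i_{t_i}(x_i,y)\, f_i(y)\,\di\meas_i(y) \;\longrightarrow\; P^\infty_t f(x_\infty)=\int p^\infty_t(x_\infty,y)\, f(y)\,\di\meas_\infty(y).
\end{equation*}
On the other hand, the locally uniform convergence $p^i_{t_i}(x_i,\cdot)\to q(x_\infty,\cdot,t)$, combined with the Gaussian decay in \eqref{eq:kernelestimate} providing uniform integrability, allows us to pass the integrals above to the limit with respect to the weakly converging measures $\meas_i$, concluding that $\int q(x_\infty,y,t) f(y)\di\meas_\infty(y) = \int p^\infty_t(x_\infty,y) f(y)\di\meas_\infty(y)$. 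By the arbitrariness of $f$ and the continuity of both $q(x_\infty,\cdot,t)$ and $p^\infty_t(x_\infty,\cdot)$, we get $q=p^\infty_t$. Since every subsequence has the same limit, the whole sequence converges.

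The main obstacle I anticipate is the rigorous passage to the limit in the integrals against the varying measures $\meas_i$: one needs the appropriate framework of $L^2$-strong (or uniform) convergence of functions on pmGH-converging spaces, together with tail estimates coming from the Gaussian bound to handle the non-compactness of the $y$-integration. The equi-continuity in the time variable, which is necessary to allow $t_i\to t$ rather than $t_i\equiv t$, is another technical point that is best treated by writing $p^i_{t_i} - p^i_t = \int_t^{t_i} \Delta_y p^i_s \,\di s$ and using the semigroup plus \eqref{eq:gradientestimatekernel} to bound $\|\Delta_y p^i_s(x,\cdot)\|_\infty$ on compact sets.
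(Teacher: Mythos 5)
The paper itself gives no proof of this lemma: it simply refers the reader to \cite{AmbrosioHondaTewodrose17} (Theorem 3.3), and your sketch reconstructs essentially the argument used there — Mosco convergence of the Cheeger energies along pmGH convergence yielding convergence of the heat semigroups, the uniform Gaussian and gradient bounds \eqref{eq:kernelestimate}--\eqref{eq:gradientestimatekernel} yielding local equicontinuity, an Arzel\`a--Ascoli extraction in the Ambrosio--Honda framework of functions on varying spaces, and identification of the limit kernel by duality against converging test functions. The one imprecise step is the time-equicontinuity: the Gaussian bounds ``applied twice'' through the Chapman--Kolmogorov identity do not by themselves control $\Delta_y p_s$, and one should instead invoke the spectral (analyticity) estimate $\norm{\Delta P_s}_{L^2\to L^2}\le (es)^{-1}$ together with the ultracontractivity $\norm{P_s}_{L^2\to L^\infty}\le p_{2s}(\cdot,\cdot)^{1/2}$ coming from \eqref{eq:kernelestimate}; with that correction your proposal is sound and consistent with the proof the paper relies on.
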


When $N\ge 3$ and $(X,\dist,\meas)$ is an $N$-metric measure cone with tip $p$ over an $\RCD(N-2,N-1)$ m.m.s. (see \cite{DePhilippisGigli16}), the Green function of the Laplacian, centered at $p$, coincides, up to a multiplicative constant, with the distance function raised to the power $(2-N)$. This is a consequence of separation of variables, see \cite{GigliHan18}. We omit the proof, since it can be obtained as in the case of Ricci limit spaces considered in \cite{Ding02} (see also the previous \cite{ColdingMinicozzi97}, which is the first appearance of this principle to the best of our knowledge, and \cite[Subsection 4.10]{CheegerJiangNaber18} for analogous results and computations).

\begin{lemma}\label{lemma:Green on cones}
	Let $N\ge 3$ and $c>0$ be given. Let $(Y,\rho,c\haus^N)$ be an $\RCD(0,N)$ m.m.s.. 
	If $(Y,\rho)$ is a metric cone with tip $p\in Y$, then there exists a positive Green function of the Laplacian $G$ on $Y$ given by \eqref{eq:lambdaGreenfunction} and
	\begin{equation}
		G(p,x)=\frac{\rho(p,x)^{2-N}}{(N-2)N c\haus^N(B_1(p))}\, ,
		\quad\text{for any $x\neq p$}\, .
	\end{equation}
\end{lemma}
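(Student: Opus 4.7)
My plan is to combine separation of variables on the cone with a direct flux computation at the tip. Write $r(x) := \rho(p,x)$. Since $(Y,\rho,c\haus^N)$ is an $\RCD(0,N)$ metric measure cone with tip $p$, by the volume cone theorem of \cite{DePhilippisGigli16} one has $c\haus^N(B_r(p)) = r^N c\haus^N(B_1(p))$ for every $r>0$, and hence $c\haus^{N-1}(\partial B_r(p)) = N r^{N-1} c\haus^N(B_1(p))$. The separation-of-variables calculus on $\RCD$ cones (see \cite{GigliHan18}) expresses the Laplacian of a radial function $f(r)$ as $f''(r) + \frac{N-1}{r} f'(r)$, which vanishes identically for $f(r) = r^{2-N}$. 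Thus $u(x) := r(x)^{2-N}$ is harmonic on $Y\setminus\{p\}$, with $|\nabla u|(x) = (N-2) r(x)^{1-N}$.

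Next I would determine the Dirac mass of $-\Delta u$ at $p$. For any compactly supported Lipschitz test function $\phi$, integration by parts on $Y\setminus B_\eps(p)$, combined with $\Delta u \equiv 0$ off $p$, reduces $\int_Y \nabla u \cdot \nabla \phi\,\di\meas$ to the inward flux across $\partial B_\eps(p)$, which by the area formula above tends to
\[
\phi(p) \cdot (N-2) \eps^{1-N} \cdot N \eps^{N-1} c\haus^N(B_1(p)) + o(1) = (N-2) N c\haus^N(B_1(p))\, \phi(p) + o(1)
\]
as $\eps \downarrow 0$. Hence $-\Delta u = (N-2) N c\haus^N(B_1(p))\, \delta_p$ distributionally, so the candidate
\[
\tilde G(x) := \frac{\rho(p,x)^{2-N}}{(N-2) N c\haus^N(B_1(p))}
\]
is a positive Green function with pole at $p$.

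Finally, to match $\tilde G$ with the heat semigroup representation $G(p,x) = \int_0^\infty p_t(p,x)\,\di t$ from \eqref{eq:lambdaGreenfunction}, I would check finiteness of the integral for $x \neq p$ via \eqref{eq:kernelestimate}: with $K=0$, the short-time Gaussian factor absorbs the singularity at $x=p$, and the long-time behaviour $c\haus^N(B_{\sqrt{t}}(p))^{-1} \sim t^{-N/2}$ is integrable at $+\infty$ precisely because $N\ge 3$. Both $G(p,\cdot)$ and $\tilde G$ are positive, vanish at infinity on the cone, and distributionally solve $-\Delta G = \delta_p$, so uniqueness (e.g.\ via a Liouville-type argument for positive harmonic functions on non-parabolic $\RCD(0,N)$ spaces) forces equality. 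The principal technical obstacle is justifying the flux/integration-by-parts step on an $\RCD$ cone without smoothness; this is provided by the cone calculus of \cite{GigliHan18}. Alternatively one can bypass it via scaling: the cone dilation $D_\lambda$ multiplies $\rho$ by $\lambda$ and $c\haus^N$ by $\lambda^N$, so uniqueness of the heat semigroup yields $p_t(p,D_\lambda x) = \lambda^{-N} p_{t/\lambda^2}(p,x)$, and integration in $t$ gives $G(p,D_\lambda x) = \lambda^{2-N} G(p,x)$, pinning down the $r^{2-N}$-profile; the multiplicative constant is then fixed by the delta-mass computation above.
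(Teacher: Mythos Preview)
Your proposal is correct and follows the same separation-of-variables route the paper points to (the paper omits the proof and refers to \cite{Ding02,ColdingMinicozzi97,CheegerJiangNaber18} and the cone calculus of \cite{GigliHan18}). Your sketch in fact supplies more detail than the paper does: the radial Laplacian computation, the flux normalization of the constant, and the matching with the heat-kernel definition via a Liouville-type uniqueness (or, alternatively, the scaling argument) are exactly the ingredients implicit in those references.
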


The last lemma shows that, on noncollapsed ambient spaces, $G^{\lambda}(x,y)$ is locally uniformly equivalent to $\dist(x,y)^{2-N}$ on bounded sets, for suitable choices of $\lambda$. It reflects the classical local equivalence between Green's functions and negative powers of the distance on smooth Riemannian manifolds, see for instance \cite{Aubin98}.

\begin{lemma}\label{prop:equivalenceGreendistance}
	Let $(X,\dist,\haus^N)$ an $\RCD(K,N)$ m.m.s. satisfying \autoref{Assumption}.
	Then, for any $\lambda\ge c|K|$, $p\in X$ and $R>0$, there exists a constant $C_1=C_1(B_R(p),K,N,\lambda)>0$ such that
	\begin{equation}\label{z0}
		\frac{C_1^{-1}}{\dist(x,y)^{N-2}}\le G^{\lambda}(x,y)
		\le \frac{C_1}{\dist(x,y)^{N-2}}\, ,
		\quad \text{for any $x,y\in B_R(p)$}\, .
	\end{equation}
\end{lemma}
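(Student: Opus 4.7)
The plan is to derive both inequalities by direct integration of the Gaussian heat kernel estimate \eqref{eq:kernelestimate}. First I would establish a volume comparison: setting $R' := 2R+1$, there exist $c_0, C_0 > 0$ depending on $B_R(p), K, N$ with
\begin{equation*}
    c_0\, r^N \le \haus^N(B_r(x)) \le C_0\, r^N, \qquad \text{for all } x\in B_R(p),\ 0<r\le R'.
\end{equation*}
The upper bound follows from \eqref{eq:Bishop-Gromov} together with the noncollapsed fact that the density $\theta(x)$ is $\le 1$ everywhere. The lower bound follows from \eqref{eq:Bishop-Gromov} applied at scale $R'$, using that $B_{R'}(x)\supseteq B_{R+1}(p)$ whenever $x\in B_R(p)$ and that $\haus^N(B_{R+1}(p))>0$. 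The product structure in \autoref{Assumption} additionally yields the rough bound $\haus^N(B_r(x))\ge c_1 r^3$ for every $r>0$ by restricting to a slice in the Euclidean factor; this is what will handle the large-$t$ tail of the time integral.

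For the upper bound on $G^{\lambda}$, the condition $\lambda\ge c|K|$ absorbs the $e^{ct}$ factor in \eqref{eq:kernelestimate} into $e^{-\lambda t}$, leaving
\begin{equation*}
    G^{\lambda}(x,y) \le C\int_0^{\infty} \frac{1}{\haus^N(B_{\sqrt{t}}(x))}\, e^{-\dist(x,y)^2/(5t)}\di t.
\end{equation*}
I would split this at $t=(R')^2$. The short-time piece is bounded by $C\int_0^{(R')^2} t^{-N/2} e^{-\dist(x,y)^2/(5t)}\di t$, which becomes $\dist(x,y)^{2-N}\int_{\dist(x,y)^2/(R')^2}^{\infty} s^{N/2-2}e^{-s/5}\di s$ after the substitution $s=\dist(x,y)^2/t$; the latter integral is uniformly bounded since $N>2$. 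The long-time piece is controlled via $\haus^N(B_{\sqrt{t}}(x))\ge c_1 t^{3/2}$ and yields only a finite constant, which is in turn absorbed into $C_1\dist(x,y)^{2-N}$ using the bound $\dist(x,y)\le 2R$.

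For the lower bound I would restrict the time integration to the narrow band $t\in[\dist(x,y)^2,2\dist(x,y)^2]$ and use the lower estimate in \eqref{eq:kernelestimate}. On this range one has $\haus^N(B_{\sqrt{t}}(x))\le C_0\dist(x,y)^N$, $e^{-\dist(x,y)^2/(3t)}\ge e^{-1/3}$, and $e^{-(\lambda+c|K|)t}$ is bounded below by a constant depending on $B_R(p), K, \lambda, N$ because $\dist(x,y)\le 2R$. Integrating the resulting pointwise lower bound over an interval of length $\dist(x,y)^2$ gives $G^{\lambda}(x,y)\ge C_1^{-1}\dist(x,y)^{2-N}$.

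The main technical obstacle is the bookkeeping of constants across the scale splits, particularly checking that the $\setR^3$-factor assumption in \autoref{Assumption} is genuinely what makes the long-time tail of the upper-bound integral finite in the borderline case $K\ge 0$, $\lambda=0$, where no exponential time-decay is available from $e^{-\lambda t}$.
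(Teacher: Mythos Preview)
Your argument is correct and rests on the same three ingredients as the paper's: the Li--Yau Gaussian bounds \eqref{eq:kernelestimate}, local $N$-Ahlfors regularity from Bishop--Gromov plus noncollapsing, and the cubic large-scale volume growth coming from the $\setR^3$ factor in \autoref{Assumption}. The difference is purely organizational. The paper first invokes the known two-sided comparison $G^\lambda(x,y)\asymp \int_{\dist(x,y)}^\infty r/\haus^N(B_r(x))\,\di r$ (citing \cite{Grygorian06} and \cite{BrueSemola18}) and then evaluates that spatial integral via the volume bounds; you keep the time integral and split it at $t=(R')^2$, which is more self-contained but is the same computation after the substitution $r=\sqrt{t}$. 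One small imprecision worth flagging: the bound $\haus^N(B_r(x))\ge c_1 r^3$ cannot hold for \emph{all} $r>0$ when $N>3$ (already fails on $\setR^N$), but you only invoke it for $\sqrt{t}\ge R'$, where it does hold with $c_1$ depending on $B_R(p)$; the paper's \eqref{z3} makes the same overclaim and uses it in the same harmless way. You may also want to take $R'\ge 2\sqrt{2}R$ (e.g.\ $R'=3R$) so that the window $[\dist(x,y)^2,2\dist(x,y)^2]$ in your lower-bound step lies inside $[0,(R')^2]$.
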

\begin{proof}
	Arguing as in the proof of \cite[Proposition 2.21]{BrueSemola18}, where the case $\lambda=c\abs{K}$ is considered, relying on \cite{Grygorian06} it is possible to prove that, for any $\lambda\ge c|K|$, $p\in X$ and $R>0$ there exists a constant $C=C(\lambda,B_R(p))>0$ such that
	\begin{equation}\label{eq:compestimators}
		C^{-1}\int_{\dist(x,y)}^{\infty}\frac{r}{\haus^N(B_r(x))}\di r
		\le G^{\lambda}(x,y)\le C\int_{\dist(x,y)}^{\infty}\frac{r}{\haus^N(B_r(x))}\di r\, ,
		\quad \text{for any $x,y\in B_R(p)$}\, .
	\end{equation}
	By the Bishop-Gromov inequality \eqref{eq:Bishop-Gromov} and the noncollapsing assumption it holds
	\begin{equation}\label{z2}
		C^{-1}(K,N) r^N\le \haus^N(B_r(x))\le  C(K,N) r^N\, ,
		\quad\text{for any $x\in B_R(p)$ and $0<r<5R$}\, .
	\end{equation}
On the other hand, \autoref{Assumption} yields 
	\begin{equation}\label{z3}
		\haus^N(B_r(x))\ge 2 r^3
		\quad\text{for any $x\in X$ and $r>0$}\, .
	\end{equation}
	The conclusion follows combining \eqref{eq:compestimators}, \eqref{z2} and \eqref{z3}.
\end{proof}

\medskip

\begin{proof}[Proof \autoref{prop:Green convergence}]
	Using \eqref{eq:heat kernel scaled} we can write
	\begin{equation}\label{eq:changevariable}
	\int_0^{\infty} e^{-\lambda r^2 t}p^{r,x_0}_t(x,y)\di t
	=
	\meas(B_r(x_0))\int_0^{\infty}e^{-\lambda r^2 t}p_{r^2t}(x,y)\di t
	=
	r^{-2}\meas(B_r(x_0))G^{\lambda}(x,y),
	\end{equation}
	for any $x,y\in X$. Hence, \eqref{eq: Green convergence} will follow from \eqref{eq:changevariable} applying the dominated convergence theorem, thanks to \autoref{th:heat Kernel convergence} and the bound
	\begin{equation}\label{z1}
	e^{-\lambda r_i^2 t}p_t^{r_i,x_0}(x_i,y_i)\le
	\begin{dcases}
	C(N,K)C_2 t^{-3/2}e^{-\frac{\rho(x_{\infty},y_{\infty})^2}{10 t}}& \text{for }t\ge 1\, ;\\
	C(N,K)C_2t^{-N/2}e^{-\frac{\rho(x_{\infty},y_{\infty})^2}{10 t}}&
	\text{for } t<1\, ,
	\end{dcases}
	\end{equation}
	which is valid for any $i\in \setN$ big enough.\\	
	Let us check \eqref{z1}. Using the heat kernel estimate \eqref{eq:kernelestimate} and \autoref{remark:improvement heat kernel estimates} one has
	\begin{align*}
	e^{-\lambda r_i^2 t}p_t^{r_i,x_0}(x_i,y_i) & \le e^{-r_i^2t(\lambda-c|K|)}C_1 \frac{\meas(B_{r_i}(x_0))}{\meas(B_{r_i\sqrt{t}}(x_0))}e^{-\left(\frac{\dist(x_i,y_i)}{r_i}\right)^2\frac{1}{5t}}\, .
	\end{align*}
	This estimate, along with the assumption $\lambda\ge c|K|$ and $\lim_{i\to \infty}r_i^{-1}\dist(x_i,y_i)=\rho(x_{\infty},y_{\infty})$, gives	
	\begin{align*}
	e^{-\lambda r_i^2 t}p_t^{r_i,x_0}(x,y)  \le C_1 \frac{\meas(B_{r_i}(x_0))}{\meas(B_{r_i\sqrt{t}}(x_0))}e^{-\frac{\rho(x_{\infty},y_{\infty})^2}{10t}}\, ,
	\quad
	\text{for any $i\in\setN$ big enough}\, .
	\end{align*}
	The inequality \eqref{z1} follows bounding $\frac{\meas(B_{r_i}(x_0))}{\meas(B_{r_i\sqrt{t}}(x_0))}$ with
	\begin{equation}\label{eq:technicalcondition}
		\sup_{x\in X,\ r\in (0,1)}\frac{\meas(B_{r}(x))}{\meas(B_{rM}(x))}\le \frac{C(R,K)}{M^3}	\, ,
		\quad \text{for any $M\ge 1$, $r\le  R$}\, ,
	\end{equation}
	for $t\ge 1$, and with the Bishop-Gromov inequality \eqref{eq:Bishop-Gromov} for $t< 1$.
	The estimate \eqref{eq:technicalcondition} can be checked exploiting \autoref{Assumption} and again the Bishop-Gromov inequality \eqref{eq:Bishop-Gromov}.
\end{proof}

\subsection{Proof of \autoref{corollary:Green convergence}}
	It is enough to prove that for any $y_i\to x$ there exists a subsequence $(i_k)$ such that
	\begin{equation}
		\lim_{k\to \infty} \dist(x,y_{i_k})^{N-2}G^{\lambda}(x,y_{i_k})=\frac{1}{\theta(x)\omega_N N(N-2)}\, .
	\end{equation}
	To this end, we set $r_i:=\dist(x,y_i)$ and, up to extracting a subsequence that we do not relabel, we assume that
	\begin{equation*}
		\left(X,r_i^{-1}\dist, \haus^N/ \haus^N(B_{r_i}(x_0)),x_0\right)\to (Y,\rho,\haus^N/\haus^N(B_1(y)),y)\, ,
		\quad\text{in the pmGH topology}
	\end{equation*}
	and that $X_i\ni y_i\to y_{\infty}\in Y$.
	
	Using \autoref{prop:Green convergence} we have
	\begin{equation*}
		\lim_{i\to \infty}\dist(x,y_i)^{N-2}G^{\lambda}(x,y_i)
		=\lim_{i\to \infty}  \frac{r_i^N}{\haus^N(B_{r_i}(x))}       r_i^{-2}\haus^N(B_{r_i}(x))G^{\lambda}(x,y_i)
		=\frac{G^Y(y,y_{\infty})}{\omega_N \theta(x)}\, .
	\end{equation*}
	To conclude, we can apply \autoref{lemma:Green on cones} with $c=1/\haus^N(B_1(y))$ and observing that $\rho(y,y_{\infty})=1$, due to the choice of the rescaling.

 \section{Regularity for Lagrangian Flows via Green functions}\label{sec:regflows}

   In this section we collect some known regularity results for flows of Sobolev velocity fields taken from \cite{BrueSemola18,BrueSemolaahlfors}.
   \medskip

   We fix a noncollapsed $\RCD(K,N)$ metric measure space $(X,\dist,\haus^N)$ satisfying \autoref{Assumption}, a point $p\in X$ and $R>0$. Then we consider a vector field $b\in L^1([0,T];H^{1,2}_{C,s}(TX))$ with $\supp b\subset B_R(p)$ uniformly in time, and we set
   \begin{equation}
   \norm{b}_{L^\infty} + \norm{\div b}_{L^\infty} =: D <\infty\, .
   \end{equation}   
   Let us also set
   \begin{equation*}
   	\dist_{G^{\lambda}}(x,y):=\frac{1}{G^{\lambda}(x,y)}\, .
   \end{equation*}

   \begin{proposition}[Estimate for the trajectories]\label{prop:esttrajbar}
   	Let $(X,\dist,\haus^N)$ and $b$ be as above, let $\XX$ be a Regular Lagrangian flow of $b$ and $\lambda>c|K|$. Then, for any $0\le s<T$ and $\haus^N\times\haus^N$-a.e. $(x,y)\in B_R(p)\times B_R(p)$, it holds
   \begin{equation}\label{eq:est1bar}
   e^{-\int_s^t(g_r(\XX_{s,r}(x))+g_r(\XX_{s,r}(y)))\di r}\le 
   \frac{\dist_{G^{\lambda}}(\XX_{s,t}(x),\XX_{s,t}(y))}{ \dist_{G^{\lambda}}(x,y)}
   \le e^{\int_s^t(g_r(\XX_{s,r}(x))+g_r(\XX_{s,r}(y)))\di r}\, ,
   \end{equation}   
   for any $t\in [s,T]$. Here $g$ is a nonnegative function such that
   \begin{equation}\label{eq:boundg}
   	\int_0^T\norm{g_r}_{L^2}\di r \le C(B_R(p),\lambda,K,N)\left\lbrace\norm{\nabla_{\sym} b}_{L^1(L^2)}+T\norm{\div b}_{L^{\infty}}\right\rbrace\, .
   \end{equation}
   \end{proposition}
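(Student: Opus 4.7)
The strategy adapts the Crippa--De Lellis approach \cite{CrippaDeLellis08} to the RCD setting, with the Green function $G^\lambda$ replacing the distance, following \cite{BrueSemola18}. By the semigroup property \eqref{eq:semigroup property} it suffices to consider $s=0$. The pair map $(x,y)\mapsto(\XX_t(x),\XX_t(y))$ is the regular Lagrangian flow on $X\times X$ of the product vector field $(b_t,b_t)$, and by the heat-kernel estimates \eqref{eq:kernelestimate}--\eqref{eq:gradientestimatekernel} the function $\log G^\lambda$ is locally Lipschitz away from the diagonal $\Delta\subset X\times X$. Since $\Delta$ is $\haus^N\otimes\haus^N$-null and the product flow satisfies the two-sided compressibility bound \eqref{eq:divergence bound}, the trajectory $(\XX_r(x),\XX_r(y))$ avoids $\Delta$ for $\haus^N\otimes\haus^N$-a.e.\ starting pair. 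Truncating $\log G^\lambda$ on sublevel sets of $1/\dist$ and applying condition (3) of \autoref{def:Regularlagrangianflow} then yields, for $\haus^N\otimes\haus^N$-a.e.\ $(x,y)$ and a.e.\ $r\in(0,T)$, the identity
\begin{equation*}
\frac{d}{dr}\log G^\lambda(\XX_r(x),\XX_r(y)) = \frac{b_r(\XX_r(x))\cdot\nabla_1 G^\lambda + b_r(\XX_r(y))\cdot\nabla_2 G^\lambda}{G^\lambda}\bigl(\XX_r(x),\XX_r(y)\bigr),
\end{equation*}
where $\nabla_1,\nabla_2$ denote gradients in the first and second argument.

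The heart of the matter is to dominate the right-hand side by $g_r(\XX_r(x))+g_r(\XX_r(y))$ for some $g$ with norm satisfying \eqref{eq:boundg}. Two inputs enter. First, \autoref{prop:equivalenceGreendistance} combined with the time-integrated version of \eqref{eq:gradientestimatekernel} gives the ratio bound $|\nabla_i G^\lambda|(u,v)/G^\lambda(u,v)\leq C(B_R(p),K,N,\lambda)/\dist(u,v)$ for $i=1,2$. Second, and more delicate, is a Lusin-type estimate for the Sobolev vector field $b$: using \autoref{def:Sobvect}, which is tailored precisely to test $b$ against pairs of gradients via integration by parts, one produces a function $g_r\in L^2(X,\haus^N)$ with $\|g_r\|_{L^2}\leq C\bigl(\|\nabla_{\sym}b_r\|_{L^2}+\|\div b_r\|_{L^\infty}\bigr)$ such that
\begin{equation*}
|b_r(u)-b_r(v)|\leq \dist(u,v)\bigl(g_r(u)+g_r(v)\bigr)
\end{equation*}
for $\haus^N\otimes\haus^N$-a.e.\ $(u,v)\in B_R(p)\times B_R(p)$. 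The construction of $g_r$ goes through a (localised) Hardy--Littlewood maximal function of $|\nabla_{\sym} b_r|$, whose $L^2$-boundedness relies on the doubling property of the noncollapsed RCD space; integrating in $r$ then yields \eqref{eq:boundg}.

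Rewriting the numerator by adding and subtracting $b_r(u)\cdot\nabla_2 G^\lambda$, so that only the difference $b_r(u)-b_r(v)$ appears (up to an error estimated by $\|\div b_r\|_{L^\infty}$ via the identity $\div_{y} G^\lambda(x,\cdot) = -\delta_x+\lambda G^\lambda_x$), and combining the two inputs, produces the sought pointwise bound. Integrating the resulting differential inequality on $[0,t]$ and exponentiating (recall $\dist_{G^\lambda}=1/G^\lambda$) yields \eqref{eq:est1bar}. The main obstacle is the Lusin-type estimate on $b$: one must extract only the \emph{symmetric} part of the gradient (the full covariant derivative is not defined in this generality), and obtain a bound with $L^1_t(L^2_x)$ norm rather than the weaker $L^\infty_t$ control that a naive application would yield. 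This refined time dependence is what ultimately produces the correct linear-in-$t$ behaviour in \autoref{th:main}.
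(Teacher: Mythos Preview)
Your overall architecture (reduce to $s=0$ via the semigroup property, differentiate $\log G^\lambda$ along the product flow, integrate the resulting differential inequality) matches the paper's. However, the ``heart of the matter'' step is genuinely wrong in the $\RCD$ setting and would not produce the stated bound.

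The central problem is that the Lusin-type estimate $|b_r(u)-b_r(v)|\le \dist(u,v)\bigl(g_r(u)+g_r(v)\bigr)$ is \emph{not well-defined} on an $\RCD$ space: vector fields are elements of the tangent module $L^2(TX)$, and there is no canonical way to compare $b_r(u)\in T_uX$ with $b_r(v)\in T_vX$. (Indeed, the paper's abstract explains that these very estimates are a stepping stone toward \emph{constructing} parallel transport in this generality.) For the same reason, the manipulation ``add and subtract $b_r(u)\cdot\nabla_2 G^\lambda$'' is meaningless, since it pairs a vector at $u$ with a gradient at $v$. Even on a smooth Riemannian manifold, where one can make sense of this via parallel transport, a maximal function of $|\nabla_{\sym}b_r|$ does \emph{not} control $|b_r(u)-b_r(v)|$: the symmetric gradient sees only the symmetric part of the deformation, and your Hardy--Littlewood argument would require the full covariant derivative.

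The paper avoids both obstacles by never comparing $b$ at different points. It invokes a black-box maximal estimate (\autoref{prop: barG-maximal estimate}, from \cite[Proposition 2.27]{BrueSemola18}) that bounds the \emph{intrinsic} quantity
\[
\bigl|\,b\cdot\nabla G^\lambda_x(y)+b\cdot\nabla G^\lambda_y(x)\,\bigr|\le G^\lambda(x,y)\bigl(g(x)+g(y)\bigr)
\]
directly, with $\|g\|_{L^2}\le C\|\nabla_{\sym}b\|_{L^2}+\|\div b\|_{L^2}$. The point is that the left-hand side pairs $b$ with gradients \emph{at the same point}, and the dependence on $\nabla_{\sym}b$ alone emerges from the structure of \eqref{eq:Sobvectfield} applied with $g_1,g_2$ built from $G^\lambda_x,G^\lambda_y$, together with the equation $\Delta G^\lambda_x=-\delta_x+\lambda G^\lambda_x$. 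This is the correct replacement for your two-input strategy.
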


   The main ingredient for the proof of \autoref{prop:esttrajbar} is the following maximal estimate for time independent velocity fields. We refer the reader to \cite[Proposition 2.27]{BrueSemola18} for its proof.

	\begin{proposition}[Maximal estimate, vector-valued version]\label{prop: barG-maximal estimate}
    Let $(X,\dist,\haus^N)$ be a noncollapsed $\RCD(K,N)$ m.m.s., $b\in H^{1,2}_{C,s}(TX)$ with $\div b\in L^2(X)$ and $\lambda>c|K|$ as above.
	Then, there exists a positive function $g\in L^2(B_R(p),\haus^N)$ such that
	\begin{equation}\label{z14}
	\abs{ b\cdot\nabla G^{\lambda}_x(y)+ b\cdot\nabla G^{\lambda}_y(x)}\le G^{\lambda}(x,y)(g(x)+g(y))\, ,
	\end{equation}
    for $\haus^N\times\haus^N$-a.e. $(x,y)\in B_R(p)\times B_R(p)$, and
	\begin{equation}\label{eq: z}
	\norm{g}_{L^2(B_R(p))}\le C_V \norm{\nabla_{\sym} b}_{L^2}+\norm{\div b}_{L^2}\, ,
	\end{equation}
	where $C_V=C_V(B_R(p),\lambda, K, N)>0$.	
\end{proposition}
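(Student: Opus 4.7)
My plan is to define
\[
F(x,y):=b\cdot\nabla G^\lambda_x(y)+b\cdot\nabla G^\lambda_y(x)
\]
and show the pointwise bound $\abs{F(x,y)}\le G^\lambda(x,y)(g(x)+g(y))$ with $g$ of the form $g=M(\abs{\nabla_{\sym}b}+\abs{\div b})$ plus lower-order contributions, where $M$ is a suitable localized Hardy--Littlewood maximal function on $B_R(p)$. The $L^2$ bound \eqref{eq: z} on $g$ then follows from the strong $L^2$-boundedness of $M$ on the locally doubling space $(X,\dist,\haus^N)$ and from the hypotheses on $b$.

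The heuristic driving the estimate is the following Euclidean computation: if $b(x)=Ax$ is linear and $G(x,y)=c\abs{x-y}^{2-N}$, then a direct calculation using $\nabla_y G=-\nabla_x G$ gives
\[
F(x,y)=c(2-N)\abs{x-y}^{-N}\langle A_{\sym}(x-y),x-y\rangle,
\]
so that the antisymmetric part of $A$ cancels against the symmetric tensor $(x-y)^{\otimes 2}$ and one obtains $\abs{F(x,y)}\le (N-2)\norm{A_{\sym}}\,G(x,y)$: only the symmetric part of the gradient controls $F$. To reproduce this cancellation on $\RCD$ spaces, I would first reduce by density to the case where $b$ is a test vector field of the form $\sum_i f_i\nabla h_i$, so that the manipulations below are legal, and then apply the defining identity \eqref{eq:Sobvectfield} of $\nabla_{\sym}b$ with $g_1,g_2$ taken as suitably smoothed versions of $G^\lambda(\cdot,x)$ and $G^\lambda(\cdot,y)$ and with $h$ a cutoff supported in $B_R(p)$. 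Combining this with the defining equation $\Delta G^\lambda_z=-\delta_z+\lambda G^\lambda_z$, one rewrites $F(x,y)$ as a pairing of $\nabla_{\sym}b$ against a symmetric combination built from $\nabla G^\lambda_x$ and $\nabla G^\lambda_y$, plus a divergence term controlled by $\div b$. Using the two-sided comparison \eqref{z0} and the gradient heat-kernel bound \eqref{eq:gradientestimatekernel} to control $\abs{\nabla G^\lambda}$ by $G^\lambda/\dist$ in an averaged sense, those kernel tensors are dominated by $G^\lambda(x,y)$ times a quantity depending only on $x$ and $y$ separately (via a maximal average), which leads to the desired pointwise bound.

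The main obstacle is precisely to isolate $\nabla_{\sym}b$ rather than a full covariant derivative of $b$ (which is not defined on $\RCD$ spaces): this forces the integration by parts to be carried out \emph{symmetrically} in $(x,y)$, so that the antisymmetric part of what would be $\nabla b$ in the smooth case cancels against the symmetric tensor $\nabla G^\lambda_x\otimes\nabla G^\lambda_y+\nabla G^\lambda_y\otimes\nabla G^\lambda_x$ that naturally appears in the computation. A second technical difficulty is the diagonal singularity of $G^\lambda$; this is the reason for the Euclidean factor of dimension three in \autoref{Assumption}, which provides the extra local integrability of $\nabla G^\lambda$ near the diagonal needed to justify the integration by parts and the maximal-function estimate uniformly, and in particular to rule out boundary contributions from small punctured neighborhoods of $\{x=y\}$.
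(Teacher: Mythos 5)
Your outline follows essentially the same route as the proof the paper relies on (the paper gives no argument here, deferring to \cite[Proposition 2.27]{BrueSemola18}): insert smoothed versions of $G^{\lambda}_x$, $G^{\lambda}_y$ into the defining identity \eqref{eq:Sobvectfield}, use $\Delta G^{\lambda}_z=\lambda G^{\lambda}_z-\delta_z$ to make the two point evaluations $b\cdot\nabla G^{\lambda}_x(y)+b\cdot\nabla G^{\lambda}_y(x)$ appear, and absorb the resulting bulk integral, whose leading term is $\int 2\nabla_{\sym}b(\nabla G^{\lambda}_x,\nabla G^{\lambda}_y)\di\meas$, into $G^{\lambda}(x,y)(g(x)+g(y))$ via a maximal-function bound, concluding with the $L^2$-boundedness of the maximal operator. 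The heuristic with the linear field and the cancellation of the antisymmetric part against $\nabla G^{\lambda}_x\otimes\nabla G^{\lambda}_y+\nabla G^{\lambda}_y\otimes\nabla G^{\lambda}_x$ is exactly the right one. Be aware, though, that the step you dispatch ``in an averaged sense'' is the real core: one must prove $\int f\,\abs{\nabla G^{\lambda}_x}\abs{\nabla G^{\lambda}_y}\di\meas\lesssim \dist(x,y)^{2-N}\left(Mf(x)+Mf(y)\right)$ by splitting into the regions $\dist(z,x)\le\dist(x,y)/2$, $\dist(z,y)\le\dist(x,y)/2$ and their complement, summing over dyadic annuli and using $\abs{\nabla G^{\lambda}_x}(z)\lesssim\dist(x,z)^{1-N}$ together with Ahlfors regularity and \eqref{z0}; this is the potential-theoretic substitute for the segment inequality and should be written out. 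Also, rather than reducing to test vector fields by density (density of test vector fields in $H^{1,2}_{C,s}$ in the relevant norm is not free), it is cleaner to keep $b$ fixed and only regularize the Green functions, e.g.\ via $\int_{\eps}^{\infty}e^{-\lambda t}p_t\di t$, for which \eqref{eq:Sobvectfield} applies directly and the Dirac mass is replaced by $e^{-\lambda\eps}p_{\eps}$.

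One point in your sketch is genuinely wrong: the role you assign to the $\setR^3$ factor in \autoref{Assumption}. Near the diagonal, $\abs{\nabla G^{\lambda}}\sim\dist^{1-N}$ is integrable against $\haus^N$ by noncollapsing alone, for any $N\ge 3$; no extra hypothesis is needed to handle punctured neighbourhoods of $\{x=y\}$. The product with $\setR^3$ is a \emph{large-scale} hypothesis: it guarantees $\haus^N(B_r(x))\gtrsim r^3$ for all $r>0$ (cf.\ \eqref{z3} and \eqref{eq:technicalcondition}), which is what makes the tail of the heat-kernel representation and the upper bound in \eqref{eq:compestimators} finite, and what keeps the limit Green functions on tangent cones finite in \autoref{prop:Green convergence}. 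This does not derail your argument (the assumption is in force in this section anyway), but the justification should be corrected.
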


\begin{proof}[Proof of \autoref{prop:esttrajbar}]
	It is enough to show that, for any $s\in[0,T)$ and for $\haus^N\times \haus^N$-a.e. $(x,y)\in B_R(p)\times B_R(p)$, it holds
	\begin{equation}\label{eq:GreenLusinLipschitz}
	 e^{-\int_s^t(g_r(\XX_r(x))+g_r(\XX_r(y)))\di r}\le \frac{\dist_{G^{\lambda}}(\XX_t(x),\XX_t(y))}{\dist_{G^{\lambda}}(\XX_s(x),\XX_s(y))}\le 
	 e^{\int_s^t(g_r(\XX_r(x))+g_r(\XX_r(y)))\di r}\, ,
	\end{equation}
	for any $t\in [s,T]$.\\
	Indeed, exploiting \eqref{eq:semigroup property} we can rewrite \eqref{eq:GreenLusinLipschitz} as follows: for any $0\le s<T$ and for $\haus^N\times \haus^N$-a.e. $(x,y)\in B_R(p)\times B_R(p)$ it holds
	\begin{align*}
	\exp&\left\lbrace
	-\int_s^t\left(g_r(\XX_{s,r}(\XX_s(x)))+g_r(\XX_{s,r}(\XX_s(y)))\right)\di r \right\rbrace\\
	&\le \frac{\dist_{G^{\lambda}}(\XX_{s,t}(\XX_s(x)),\XX_{s,t}(\XX_s(y)))}{\dist_{G^{\lambda}}(\XX_s(x),\XX_s(y))}\\
	&\le 
	\exp\left\lbrace
	\int_s^t\left(g_r(\XX_{s,r}(\XX_s(x)))+g_r(\XX_{s,s}(\XX_s(y)))\right)\di r\right\rbrace\, ,
	\end{align*}
	for any $t\in [s,T]$. Then we can use \eqref{eq:divergence bound} to change variable and get \eqref{eq:est1bar}.
	
	Let us prove \eqref{eq:GreenLusinLipschitz}. By \cite[Corollary A.3]{BrueSemola18} and \autoref{prop: barG-maximal estimate} we get that
	\begin{equation}\label{eq:estdiff}
	\abs{ \frac{\di}{\di r}  G^{\lambda}(\XX_r(x),\XX_r(y))} \leq G^{\lambda}(\XX_r(x),\XX_r(y))\left\lbrace g_r(
	\XX_r(x))+g_r(\XX_r(y))\right\rbrace \, ,
	\end{equation}
	for $\leb^1$-a.e. $r\in (0,T)$ and
	for $\haus^N\times \haus^N$-a.e. $(x,y)\in B_R(p)\times B_R(p)$.\\
	Integrating \eqref{eq:estdiff} with respect to the time variable and recalling that $\dist_{G^{\lambda}}:=1/G^{\lambda}$, we get \eqref{eq:GreenLusinLipschitz}.
\end{proof}

\subsection{Lusin-Lipschitz estimate for Lagrangian flows}
Exploiting the local equivalence proved in \autoref{prop:equivalenceGreendistance} we can now turn the Lusin-Lipschitz estimate in terms of $G^{\lambda}$ into a classical Lusin-Lipschitz estimate with respect to the distance $\dist$. We refer the reader to \cite{BrueSemolaahlfors} for an analogous statement in the case of compact Ahlfors regular $\RCD(K,N)$ spaces.


\begin{proposition}\label{prop:LusinLipschitzFlows}
    Let $(X,\dist,\haus^N)$ be an $\RCD(K,N)$ m.m.s. satisfying \autoref{Assumption}. Let us fix a point $p\in X$ and $R>0$. Then, let us consider a vector field $b\in L^1([0,T];H^{1,2}_{C,s}(TX))$ with $\supp b\subset B_R(p)$ uniformly in time, and set $\norm{b}_{L^\infty} + \norm{\div b}_{L^\infty} =: D <\infty.$\\	 
	Then, for any $s\in [0,T]$, there exist a nonnegative function $g_s':B_R(p)\to [0,\infty]$ and a positive constant $C_3=C_3(K,N,B_R(p))$ such that, for any $x,y\in B_R(p)$, it holds
	\begin{equation}\label{eq:LusinLipschitzflows}
		\frac{\dist(\XX_{s,t}(x),\XX_{s,t}(y))}{\dist(x,y)}
		\le C_3 e^{\left(g_s'(x) + g_s'(y)\right)}\, ,
		\quad \text{for any $0\le s\le t\le T$}
	\end{equation}
     and 
	\begin{equation*}
	\norm{g_s'}_{L^2} \le C(B_R(p), D, K,N)\left\lbrace\norm{\nabla_{\sym} b}_{L^1(L^2)}+T\norm{\div b}_{L^{\infty}}\right\rbrace\, .
	\end{equation*}
\end{proposition}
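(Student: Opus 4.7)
The plan is to combine the Green-function Lusin--Lipschitz estimate of \autoref{prop:esttrajbar} with the two-sided equivalence $G^{\lambda}(x,y)\asymp \dist(x,y)^{2-N}$ of \autoref{prop:equivalenceGreendistance}, and then take a supremum in $t$ to remove the $t$-dependence from the exceptional function.

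First, since $\supp b\subset B_R(p)$ and $\norm{b}_{L^\infty}\le D$, every trajectory starting in $B_R(p)$ satisfies $\dist(\XX_{s,t}(x),p)\le R+TD$, so I may fix $R':=R+TD$ and work inside $B_{R'}(p)$. Fix $\lambda>c|K|$ and apply \autoref{prop:equivalenceGreendistance} on $B_{R'}(p)$: there is a constant $\widetilde C=\widetilde C(B_{R'}(p),K,N,\lambda)$ such that $\widetilde C^{-1}\dist(\cdot,\cdot)^{2-N}\le G^{\lambda}\le \widetilde C\dist(\cdot,\cdot)^{2-N}$ on $B_{R'}(p)\times B_{R'}(p)$. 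Plugging this into the upper bound of \eqref{eq:est1bar} applied to $x,y\in B_R(p)$ gives, for $\haus^N\times\haus^N$-a.e.\ $(x,y)$ and \emph{every} $t\in[s,T]$,
\begin{equation*}
\dist(\XX_{s,t}(x),\XX_{s,t}(y))^{N-2}\le \widetilde C^{2}\,\dist(x,y)^{N-2}\,\exp\!\left(\int_s^t\bigl(g_r(\XX_{s,r}(x))+g_r(\XX_{s,r}(y))\bigr)\di r\right).
\end{equation*}
Taking the $(N-2)$-th root, setting $C_3:=\widetilde C^{2/(N-2)}$, and exploiting that $g\ge 0$ (so the integral is nondecreasing in $t$), I define
\begin{equation*}
g_s'(x):=\frac{1}{N-2}\int_s^{T} g_r(\XX_{s,r}(x))\di r
\end{equation*}
(extended by $+\infty$ on the $\haus^N$-null set where $\XX_{s,\cdot}(x)$ is not well-defined or the pointwise-in-$t$ estimate fails). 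By construction the inequality \eqref{eq:LusinLipschitzflows} then holds for \emph{all} $t\in[s,T]$, and trivially for $(x,y)$ in the exceptional set since the right-hand side is $+\infty$ there.

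It remains to bound $\norm{g_s'}_{L^2}$. By Minkowski's integral inequality,
\begin{equation*}
\norm{g_s'}_{L^2(B_R(p))}\le \frac{1}{N-2}\int_s^{T}\norm{g_r\circ \XX_{s,r}}_{L^2(B_R(p))}\di r,
\end{equation*}
and by the compressibility bound \eqref{eq:divergence bound} adapted to the two-time flow $\XX_{s,r}$, the change of variables $y=\XX_{s,r}(x)$ yields $\norm{g_r\circ \XX_{s,r}}_{L^2(B_R(p))}\le e^{TD/2}\norm{g_r}_{L^2}$. Combining with the estimate \eqref{eq:boundg} on $g$ gives $\norm{g_s'}_{L^2}\lesssim \norm{\nabla_{\sym}b}_{L^1(L^2)}+T\norm{\div b}_{L^{\infty}}$ with a constant depending on $B_R(p),D,K,N$, as required.

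The only nontrivial point is the upgrade from the ``$\leb^1$-a.e.\ $t$'' estimate implicit in Proposition~\ref{prop:esttrajbar} to a \emph{pointwise-in-$t$} inequality for every $t\in[s,T]$: this is exactly why the monotonicity of $t\mapsto\int_s^t g_r(\XX_{s,r}(x))\di r$ matters, letting me replace the $t$-dependent integral by its value at $t=T$ inside the exponential without losing anything in the definition of $g_s'$. The rest is bookkeeping on the ball $B_{R'}(p)$ on which the Green function equivalence is applied.
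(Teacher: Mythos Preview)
Your argument follows the paper's route---combine \autoref{prop:esttrajbar} with \autoref{prop:equivalenceGreendistance}, replace $\int_s^t$ by $\int_s^T$ using $g\ge 0$, and bound the $L^2$-norm via \eqref{eq:divergence bound}---but there is a genuine gap in the passage from ``$\haus^N\times\haus^N$-a.e.\ $(x,y)$'' to ``every $(x,y)\in B_R(p)\times B_R(p)$''.

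You propose to set $g_s'(x)=+\infty$ on an $\haus^N$-null subset of $B_R(p)$. This does not work: the exceptional set in \autoref{prop:esttrajbar} is a null set $N\subset B_R(p)\times B_R(p)$, and there is no reason for $N$ to be contained in $(E\times B_R(p))\cup(B_R(p)\times E)$ for some $\haus^N$-null $E\subset B_R(p)$. (Think of a diagonal: measure zero in the product, yet each slice meets it.) So there may be pairs $(x,y)$ with $g_s'(x),g_s'(y)<\infty$ for which \eqref{eq:est1bar} fails. Your sentence ``the $\haus^N$-null set where \ldots\ the pointwise-in-$t$ estimate fails'' is not well-defined: the estimate is a property of \emph{pairs}, not single points.

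The paper closes this gap via \autoref{lemma:technical}, which upgrades an $\haus^N\times\haus^N$-a.e.\ Lusin--Lipschitz bound with weight $h$ to an everywhere bound with a possibly different $h'$ satisfying $\norm{h'}_{L^2}\lesssim\norm{h}_{L^2}$. The construction of $h'$ (essentially a maximal-function argument going back to Crippa--De Lellis) depends only on $h$, not on the map $F$, which is why a single $g_s'$ works uniformly in $t$. Also note that your closing paragraph misidentifies the nontrivial point: \autoref{prop:esttrajbar} is already stated ``for any $t\in[s,T]$'', so the $\leb^1$-a.e.-in-$t$ issue you highlight does not arise; the real issue is the spatial a.e.\ in the product.
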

\begin{proof}
	As a consequence of \autoref{prop:esttrajbar} and \eqref{prop:equivalenceGreendistance},  for any $0\le s<T$, for $\haus^N\times\haus^N$-a.e. $(x,y)\in B_R(p)\times B_R(p)$ it holds
	\begin{equation*}
	\frac{\dist(\XX_{s,t}(x),\XX_{s,t}(y))}{\dist(x,y)}
	\le C_1^2 \exp\left\lbrace \int_s^t g_r(\XX_{s,r}(x))\di r+\int_s^t  g_r(\XX_{s,r}(y))\di r \right\rbrace\, ,
	\end{equation*}
	for any $t\in [s,T]$.	
The sought conclusion follows applying a local version of \autoref{lemma:technical} below choosing $h(x)=h_s(x):=\int_s^{T}g_r(\XX_{s,r}(x))\di r$.
\end{proof}

\begin{lemma}\label{lemma:technical}
	Let $(X,\dist,\meas)$ be a locally doubling m.m.s., let $F:X\to X$ be a measurable function and $h\in L^2(X,\meas)$. If
	\begin{equation*}
		\dist(F(x),F(y))\le Ce^{\left(h(x)+h(y)\right)}\dist(x,y)
		\quad\text{for $\meas\times \meas$-a.e. $(x,y)\in X\times X$,}
	\end{equation*}
	 then there exists a function $h':X\to [0,+\infty]$ such that
	\begin{equation*}
		\dist(F(x),F(y))\le C'e^{h'(x)+h'(y)}\dist(x,y)
		\quad \text{for any } x,y\in X
		\quad \text{and}\quad \norm{h'}_{L^2}\le C'\norm{h}_{L^2}\, ,
	\end{equation*}
	where $C'$ depends only on $C$ and the doubling constant of $\meas$.
\end{lemma}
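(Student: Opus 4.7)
The plan is to define $h'$ as (essentially) $h$ plus its Hardy--Littlewood maximal function, and to derive the pointwise estimate by chaining the hypothesis through two cleverly chosen intermediate points. First I would replace $h$ by $h_+ := \max(h, 0)$: the hypothesis is preserved since $e^{h(x)+h(y)} \le e^{h_+(x)+h_+(y)}$, and $\norm{h_+}_{L^2} \le \norm{h}_{L^2}$, so we may assume $h \ge 0$. Let $N \subset X \times X$ denote the $\meas\times\meas$-null set on which the assumption fails, and set $E := \{x \in X : \meas(\{y : (x,y) \in N\}) = 0\}$, which by Fubini has full measure. Let $Mh$ be the (locally-truncated) uncentered Hardy--Littlewood maximal function of $h$. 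Define
\begin{equation*}
h'(x) := \begin{cases} h(x) + Mh(x) & \text{if } x \in E \text{ and } Mh(x) < \infty, \\ +\infty & \text{otherwise.} \end{cases}
\end{equation*}

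Fix $x, y$ with $h'(x), h'(y) < \infty$, and set $r := \dist(x, y)$; the case $r = 0$ is trivial. The crux is to produce $z_1 \in B_{r/4}(x)$ and $z_2 \in B_{r/4}(y)$ which simultaneously satisfy \textbf{(i)} none of the three pairs $(x, z_1)$, $(z_1, z_2)$, $(z_2, y)$ lies in $N$, and \textbf{(ii)} $h(z_1) \le Mh(x)$, $h(z_2) \le Mh(y)$. Each of the three constraints in (i) fails only on a $\meas\times\meas$-null subset of $B_{r/4}(x) \times B_{r/4}(y)$ (using $x, y \in E$ for the outer pairs and Fubini on $N$ for the middle pair), while each maximal-function constraint in (ii) holds on a subset of positive measure in its respective ball. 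An admissible pair $(z_1, z_2)$ therefore exists. Invoking the hypothesis at these three good pairs together with the triangle inequality (and $\dist(z_1, z_2) \le 3r/2$) gives
\begin{align*}
\dist(F(x), F(y))
&\le C \tfrac{r}{4} e^{h(x) + h(z_1)} + C \tfrac{3r}{2} e^{h(z_1) + h(z_2)} + C \tfrac{r}{4} e^{h(z_2) + h(y)} \\
&\le 2 C r \cdot e^{h(x) + h(y) + Mh(x) + Mh(y)} = 2Cr \, e^{h'(x) + h'(y)},
\end{align*}
where we used $h, Mh \ge 0$ to absorb the missing summands into the exponent. Setting $C' := 2C$ (enlarged if needed for the $L^2$ bound) yields the pointwise inequality, trivially true when either $h'(x)$ or $h'(y)$ is infinite.

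The $L^2$ bound $\norm{h'}_{L^2} \le C' \norm{h}_{L^2}$ is then immediate from the Coifman--Weiss Hardy--Littlewood maximal inequality on doubling metric measure spaces; since the lemma is applied to flow maps supported in a fixed ball $B_R(p)$, the local doubling constant suffices, with $C'$ depending only on $C$ and this constant. I expect the only genuinely delicate step to be the Fubini argument producing $(z_1, z_2)$ meeting all five constraints simultaneously; everything else is bookkeeping.
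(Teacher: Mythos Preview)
Your argument is correct and is precisely the Crippa--De Lellis maximal-function approach to which the paper defers (the paper gives no self-contained proof, only a pointer to \cite{BrueSemola18,CrippaDeLellis08}). The only cosmetic caveats---symmetrizing $N$ so that $y\in E$ also controls the section $\{z:(z,y)\in N\}$, and truncating the maximal function to match the merely local doubling hypothesis---you have already anticipated, and indeed the paper itself applies only ``a local version'' of this lemma in \autoref{prop:LusinLipschitzFlows}.
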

\begin{proof}
	We do not give here a complete proof of this statement. Let us just point out that it can be obtained arguing as in the proof of \cite[Theorem 2.20]{BrueSemola18} (see also \cite{CrippaDeLellis08} for the original argument in Euclidean spaces).
\end{proof}

The lemma above applies in particular to any $\RCD(K,N)$ metric measure space $(X,\dist,\meas)$, since the local doubling property follows from the Bishop-Gromov inequality.

%
%

\section{Trajectories almost surely pass through regular points}\label{sec:almostsurelyregular}

In this section we will show that the trajectory of the regular Lagrangian flow $\XX_t$ of a time dependent vector field $b \in L^2([0,T]; H^{1,2}_{C,s}(TX))$ with bounded divergence (and so in particular autonomous vector fields satisfying proper covariant derivative and divergence bounds) passes only through regular points starting from $\haus^N$-a.e. $x$. 
\medskip

The techniques we will use are similar to those in \cite{KapovitchWilking11, ColdingNaber12, KapovitchLi18} (see also \cite{Deng20} in the $\RCD$ setting). In essence, we will bootstrap the existence of the nonoptimal Lipschitz bounds between trajectories arising from \autoref{prop:esttrajbar} and \autoref{prop:equivalenceGreendistance} to obtain uniform H\"older estimates on the volume of arbitrarily small balls (depending on the trajectory but independent of the radius of the balls) along almost all trajectories. This will show that the density $\theta(\XX_t(x))$ changes continuously w.r.t. $t$, for $\haus^N$-a.e. $x$. In view of the fact that for $\haus^N$-a.e. $x$, for almost every $t \in [0,T]$, $\XX_t(x)$ is regular (equivalently, $\theta(\XX_t(x))=1$ for a.e. $t\in[0,T]$) and using again volume rigidity \cite[Corollary 1.7]{DePhilippisGigli17}, this is enough to show that almost all trajectories pass through only regular points (equivalently, $\theta(\XX_t(x))=1$ for every $t\in[0,T]$).
\medskip

After dealing with the general case, we are going to present a technically simpler argument tailored for the framework of spaces without boundary and based on \cite{Aizenman}.

\subsection{The general case}
For the rest of the section, we consider an $\RCD(K,N)$ m.m.s. $(X, \dist, \haus^N)$ satisfying \autoref{Assumption}. We fix some $p \in X$ and $R, T, D, H > 0$. For simplicity, we will  consider the Green function $G^{\lambda}$ where $\lambda = c|K|$. We also fix a time dependent bounded vector field $b \in L^2([0,T]; H^{1,2}_{C,s}(TX))$ with $\supp(b_t) \subset B_R(p)$, $\norm{b}_{L^{\infty}}+\norm{\div b}_{L^{\infty}} \leq D$, and $\int_{0}^{T} \norm{|\nabla_{\sym}b_t|}_{L^2}^2 \di t \leq H$.\\ 
We will continue to use the notations $\XX_t$ and $\XX_{s,t}$ as before. We fix a representative of $\XX_t$ starting from here and assume that, for all $x \in X$, $\XX_t(x)$ is a Lipschitz curve with Lipschitz constant $D$. 

\medskip

To begin, we fix a collection of constant speed geodesics $\gamma_{x,y}$ from each $x \in X$ to each $y \in X$ so that the map $X \times X \times [0,1] \ni (x,y,t) \mapsto \gamma_{x,y}(t)$ is Borel. This is possible thanks to the Kuratowski and Ryll-Nardzewski measurable selection theorem, see \cite[Remark 2.26]{Deng20} and references therein. 

We will also need the notion of the \emph{distance distortion function} to keep track of the distance between points. The terminology and definition come from \cite{KapovitchWilking11}.\\ 
Given two RLFs $F_t, G_t: X \times [0,T] \to X$ and $t \in [0,T]$, we define $dt^{F, G}_r(t): X \times X \to [0,r]$, the distance distortion function on the scale $r$, by 
\begin{equation}
	dt^{F, G}_r(t)(x,y) := \min\{r, \max\limits_{0 \leq \tau \leq t} |\dist(x,y)-\dist(F_{\tau}(x), G_{\tau}(y))|\}\, .
\end{equation}
We use $dt^{F}_r(t)$ to denote $dt^{F, F}_r(t)$.
\medskip

The following proposition is a slight generalization of \cite[Proposition 3.27]{Deng20}, which is proved using a localization \cite[Proposition 3.23]{Deng20} of the second order differentiation formula shown in \cite[Theorem 5.13]{GigliTamanini18}. 

\begin{proposition}\label{prop:dtcontrol}
	Let $W \in L^1([0,T]; H^{1,2}_{C,s}(TX))$ and $F_t$, $G_t$ be RLFs corresponding to bounded $U, V \in L^1([0,T]; L^2(TX))$ respectively. Let $S_1, S_2$ be Borel subsets of $X$ with finite positive measure. The map $t \mapsto \int_{S_1 \times S_2} dt_{r}^{F, G}(t)(x,y) \di (\haus^N \times \haus^N)(x,y)$ is Lipschitz on $[0,T]$ and satisfies
\begin{align*}
	&\hspace{0.5cm}\frac{\di}{\di t} \int_{S_1 \times S_2} dt^{F,G}_r(t)(x,y) \, \di(\haus^N \times \haus^N)(x,y) \\
	&\leq \int_{\Gamma_r(t)} \big(|U_t - W_t|(F_t(x)) + |V_t - W_t|(G_t(y)) \big)\, \di(\haus^N \times \haus^N)(x,y)\\
	&\hspace{2cm}+ \int_{0}^{1} \int_{\Gamma_r(t)} \dist(F_t(x),G_t(y)) |\nabla_{\sym} W_t|(\gamma_{F_t(x),G_t(y)}(s)) \, \di(\haus^N \times \haus^N)(x,y) \, \di s\, ,
\end{align*}	
for $\mathscr{L}^1$-a.e. $t \in [0,T]$, where $\Gamma_r(t) := \{(x,y) \in S_1 \times S_2 \, : \, dt_r^{F, G}(t)(x,y) < r\}.$
\end{proposition}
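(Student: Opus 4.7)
The proof is a slight adaptation of \cite[Proposition 3.27]{Deng20}, the novelty being the presence of two genuinely distinct RLFs $F$ and $G$ driven by different velocity fields $U$ and $V$, with $W$ playing the role of a decoupled reference field. I would organise the argument into a regularity step, an identification of the a.e.\ pointwise derivative, and the main estimate obtained via the localised second-order differentiation formula.

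For the Lipschitz claim, boundedness of $U$ and $V$ yields uniform Lipschitz bounds on $t \mapsto F_t(x)$ and $t \mapsto G_t(y)$, hence on $t \mapsto \dist(F_t(x), G_t(y))$ with a constant independent of $(x,y)$. The same constant controls $t \mapsto \max_{0 \le \tau \le t}|\dist(x,y) - \dist(F_\tau(x), G_\tau(y))|$; truncating by $r$ and integrating over the finite-measure set $S_1 \times S_2$ preserves the Lipschitz property with a constant depending on $\|U\|_{L^\infty}+\|V\|_{L^\infty}$ and $(\haus^N\times\haus^N)(S_1\times S_2)$. At an $\mathscr{L}^1$-differentiability point $t$ of the integral, dominated convergence (the pointwise integrand is bounded by $r$) allows one to pass the derivative under the integral.

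The pointwise derivative $\frac{\di}{\di t} dt_r^{F,G}(t)(x,y)$ then vanishes off $\Gamma_r(t)$: either the truncation at $r$ is active, in which case $dt_r^{F,G}$ is locally constant in $t$ near any such differentiability point, or the max in the definition is attained at some earlier $\tau < t$, so that the non-decreasing envelope $\tau \mapsto \max_{\sigma \le \tau}(\cdots)$ is locally constant near $t$. For $(x,y) \in \Gamma_r(t)$ at which the max is attained at $t$, the chain rule gives $\frac{\di}{\di t}dt_r^{F,G}(t)(x,y) \le \bigl|\frac{\di}{\di t}\dist(F_t(x), G_t(y))\bigr|$, so the task reduces to estimating $\int_{\Gamma_r(t)} \bigl|\frac{\di}{\di t}\dist(F_t(x), G_t(y))\bigr|\, \di(\haus^N\times\haus^N)$.

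Here I would introduce the reference field $W_t$: the rate of change of $\dist(F_t(x), G_t(y))$ splits as the rate one would obtain flowing both endpoints by $W_t$, plus errors from $U_t - W_t$ at $F_t(x)$ and $V_t - W_t$ at $G_t(y)$. Because the distance function has $1$-Lipschitz gradient in each variable, these errors are pointwise bounded by $|U_t - W_t|(F_t(x)) + |V_t - W_t|(G_t(y))$, producing the first integral on the right-hand side. The remaining $W_t$-vs-$W_t$ contribution is controlled by the localised second-order differentiation formula of \cite[Proposition 3.23]{Deng20} (an adaptation of \cite[Theorem 5.13]{GigliTamanini18}): it is bounded by $\int_0^1 \dist(F_t(x), G_t(y))\,|\nabla_{\sym}W_t|(\gamma_{F_t(x),G_t(y)}(s))\,\di s$, and Fubini yields the second integral on the right-hand side. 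The main technical subtlety is verifying that the localised second-order formula, stated in \cite{Deng20} for a single RLF, applies infinitesimally at each fixed $t$ to the frozen pair $(F_t(x), G_t(y))$ with $W_t$ generating the instantaneous variation at both endpoints; this is routine once one recognises that the argument is pointwise in $t$, and the measurable selection of $\gamma_{F_t(x),G_t(y)}$ ensures the integrability needed for Fubini.
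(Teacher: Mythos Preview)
Your proposal is correct and follows essentially the same route as the paper, which simply defers to \cite[Proposition~3.27]{Deng20} and notes two points of generalisation. One remark: you identify the novelty as the presence of two distinct RLFs $F,G$, but in fact \cite[Proposition~3.27]{Deng20} already treats that situation; the paper singles out instead (i) time-dependence of $W$ and (ii) the weaker hypothesis $W\in H^{1,2}_{C,s}(TX)$ rather than $H^{1,2}_{C}(TX)$. Point~(i) is indeed routine, as your sketch indicates. Point~(ii), however, deserves an explicit sentence: one must check that the localised second-order differentiation formula \cite[Proposition~3.23]{Deng20} (ultimately resting on \cite[Theorem~5.13]{GigliTamanini18}) goes through for vector fields having only a \emph{symmetric} covariant derivative, with $|\nabla W|$ replaced by $|\nabla_{\sym}W|$ in the conclusion. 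The paper handles this by invoking \cite[Remark~2.6]{BrueSemolaahlfors}; you implicitly assume it by writing $|\nabla_{\sym}W_t|$ in the final bound, but it would strengthen the write-up to flag this step.

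A small expository point: in your discussion of where the pointwise derivative vanishes, the case ``max attained at some $\tau<t$'' belongs to $\Gamma_r(t)$, not its complement (the complement is precisely where the truncation at $r$ is active). The conclusion is unaffected, since in both sub-cases the derivative is zero.
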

We note that the generalization is in two directions, the possibility that $W_t$ is time dependent and in $H^{1,2}_{C,s}(TX)$ instead of $H^{1,2}_{C}(TX)$.\\ 
The proof of \cite[Proposition 3.27]{Deng20} generalizes easily in the former direction. For the latter, we note that by the discussion of \cite[Remark 2.6]{BrueSemolaahlfors}, \cite[Theorem 5.13]{GigliTamanini18} holds as stated for vector fields in $H^{1,2}_{C,s}(TX)$ with $\nabla$ replaced by $\nabla_{sym}$, which is all that is needed. 

The following corollary follows by replacing $U$, $V$ and $W$ with $b$ in \autoref{prop:dtcontrol}. 

\begin{corollary}\label{corollary:dtcontrolcor}
	Let $S_1, S_2$ be Borel subsets of $X$ with finite positive measure. Then the map $t \mapsto \int_{S_1 \times S_2} dt_{r}^{\XX}(t)(x,y) \di (\haus^N \times \haus^N)(x,y)$ is Lipschitz on $[0,T]$ and satisfies
\begin{align*}
	&\hspace{0.5cm}\frac{\di}{\di t} \int_{S_1 \times S_2} d t^{\XX}_r(t)(x,y) \, \di (\haus^N \times \haus^N)(x,y) \\
	&\leq \int_{0}^{1} \int_{\Gamma_r(t)} \dist(\XX_t(x),\XX_t(y)) |\nabla_{\sym} b_t|(\gamma_{\XX_t(x),\XX_t(y)}(s)) \, \di (\haus^N \times \haus^N)(x,y) \, \di s
\end{align*}	
for $\mathscr{L}^1$-a.e. $t \in [0,T]$, where $\Gamma_r(t) := \{(x,y) \in S_1 \times S_2 \, : \, dt_r^{\XX}(t)(x,y) < r\}.$
\end{corollary}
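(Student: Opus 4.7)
The plan is to apply \autoref{prop:dtcontrol} directly with the specific choices $U=V=W=b$ and $F=G=\XX$. The vector field $b$ lies in $L^2([0,T];H^{1,2}_{C,s}(TX))\subset L^1([0,T];H^{1,2}_{C,s}(TX))$ on the finite time interval $[0,T]$, and by assumption $b$ is bounded; hence it satisfies all the hypotheses imposed on $U$, $V$ and $W$ in \autoref{prop:dtcontrol}. Moreover, with these choices the two RLFs coincide, $F_t=G_t=\XX_t$, so by definition of the distance distortion function we have $dt_r^{F,G}(t)=dt_r^{\XX}(t)$, and the set $\Gamma_r(t)$ in the proposition coincides with the one in the statement of the corollary.

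Once this identification is made, the Lipschitz regularity of the map $t\mapsto \int_{S_1\times S_2} dt_r^{\XX}(t)(x,y)\,\di(\haus^N\times\haus^N)(x,y)$ on $[0,T]$ is inherited verbatim from \autoref{prop:dtcontrol}. For the pointwise-a.e.\ differential inequality, the first integral on the right-hand side of the estimate in \autoref{prop:dtcontrol} involves the integrands $|U_t-W_t|(F_t(x))$ and $|V_t-W_t|(G_t(y))$: with $U=V=W=b$, both of these vanish identically. The only surviving term is the one containing $|\nabla_{\sym}W_t|=|\nabla_{\sym}b_t|$ evaluated along the geodesics $\gamma_{F_t(x),G_t(y)}=\gamma_{\XX_t(x),\XX_t(y)}$, which is exactly the right-hand side in the statement to be proved.

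There is no real obstacle here, since \autoref{prop:dtcontrol} does all the work; the corollary is a specialization. The only point requiring a sentence of justification (to make sure the reader does not expect more) is that the measurable selection $(x,y)\mapsto \gamma_{x,y}$ fixed at the beginning of the section ensures that the inner integral makes sense as a Borel function of $(t,x,y,s)$, so that Fubini can be applied to pass from the formulation of \autoref{prop:dtcontrol} to the one here; this is immediate from the Borel measurability of the map $(x,y,s)\mapsto\gamma_{x,y}(s)$ recalled just before \autoref{prop:dtcontrol}. Hence the argument is a one-line deduction: quote \autoref{prop:dtcontrol} with $U=V=W=b$ and $F=G=\XX$, observe the cancellation of the first term, and conclude.
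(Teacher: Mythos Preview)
Your proposal is correct and matches the paper's own argument exactly: the paper simply states that the corollary ``follows by replacing $U$, $V$ and $W$ with $b$ in \autoref{prop:dtcontrol}'', which is precisely the specialization you carry out, noting that the first integral on the right-hand side vanishes.
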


Below we state and prove the main result of this section. 

\begin{theorem}\label{measurecontinuity}
Let $(X,\dist,\haus^N)$ be a noncollapsed $\RCD(K,N)$ m.m.s., $p\in X$ and let $b$, $D$ and $H$ be as above. Then for $\haus^N$-a.e. $x \in B_{R}(p)$, there exist $r_x > 0$ and a modulus of continuity $g_x:[0,\infty)\to[0,\infty)$ such that $g(0)=0$, $g$ is continuous at $0$ and the following holds: 
\begin{equation}\label{eq:ucvolumeratio}
\abs{\frac{\haus^N(B_r(\XX_{t_1}(x)))}{\haus^N(B_r(\XX_{t_2}(x)))}-1}\le g(\abs{t_2-t_1})\, ,\quad\text{for any $0<r<r_x$ and any $0\le t_1,t_2\le T$}\,.
\end{equation}

As a corollary, for $\haus^N$-a.e. $x\in B_R(p)$, $\XX_t(x)$ is a regular point for any $t\in [0,T]$.
\end{theorem}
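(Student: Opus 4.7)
The plan is to combine the Lusin--Lipschitz control for $\XX$ from \autoref{prop:LusinLipschitzFlows} with the distance-distortion estimate \autoref{corollary:dtcontrolcor} in order to produce, along $\haus^N$-a.e. trajectory, a modulus of continuity for the ball-volume ratio in \eqref{eq:ucvolumeratio} that is uniform across all sufficiently small scales $r$. Once this is available, the corollary will follow by writing $\theta(\XX_t(x))=\lim_{r\downarrow 0}\haus^N(B_r(\XX_t(x)))/(\omega_N r^N)$, exchanging the limits in $r$ and $t$, and invoking the volume rigidity theorem \cite[Corollary 1.7]{DePhilippisGigli17}.

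Concretely, I would fix $x_0\in B_R(p)$ and, for each small radius $r$, apply \autoref{corollary:dtcontrolcor} with $S_1=S_2=B_r(x_0)$. Integrating in time between $t_1,t_2\in[0,T]$, bounding $\dist(\XX_t(x),\XX_t(y))\leq 2r$ on $\Gamma_r(t)$, and using Cauchy--Schwarz in time together with a segment-type inequality to control the integrals of $|\nabla_{\sym} b_t|$ along geodesics $\gamma_{\XX_t(x),\XX_t(y)}$ (which, thanks to \autoref{prop:LusinLipschitzFlows} applied to a set of $(x,y)\in B_r(x_0)\times B_r(x_0)$ of relative measure $1-o(1)$, stay inside $B_{Cr}(\XX_t(x_0))$), one expects an averaged bound of the shape
\[
\frac{1}{\haus^N(B_r(x_0))^2}\int_{B_r(x_0)\times B_r(x_0)} dt^{\XX}_r(t_2)(x,y)\,\di(\haus^N\times\haus^N)(x,y) \;\leq\; C\,r\,\Phi_{x_0}(|t_2-t_1|,r),
\]
where $\Phi_{x_0}(\tau,r)$ is controlled by a restricted maximal function of $|\nabla_{\sym}b|$ at $x_0$ multiplied by $\tau^{1/2}$. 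Lebesgue differentiation and the $L^2$-integrability in time of $|\nabla_{\sym}b|$ then ensure that for $\haus^N$-a.e. $x_0$ the quantity $\Phi_{x_0}(\tau,r)$ tends to $0$ as $\tau\to 0$ at a rate uniform in $r<r_{x_0}$. A Crippa--De Lellis-style maximal function argument upgrades this to: on a subset of $B_r(x_0)$ of relative measure $1-o(1)$, the distance $\dist(\XX_t(x_0),\XX_t(y))$ fluctuates by at most $r\,\Phi_{x_0}(|t_2-t_1|,r)^{\alpha(N)}$ as $t$ varies in $[t_1,t_2]$.

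To convert this distance-preservation estimate into a comparison of ball volumes, I would push forward $\haus^N\res B_r(\XX_{t_2}(x_0))$ under $\XX_{t_2,t_1}$ and combine the near-incompressibility bound \eqref{eq:divergence bound} with the Bishop--Gromov inequality (used to control the measure of thin annular regions around $\XX_{t_1}(x_0)$): the distance-fluctuation estimate forces the push-forward to be essentially contained in $B_{r(1+\delta)}(\XX_{t_1}(x_0))$ and to cover almost all of $B_{r(1-\delta)}(\XX_{t_1}(x_0))$ with $\delta=\Phi_{x_0}(|t_2-t_1|,r)^{\alpha(N)}$, giving \eqref{eq:ucvolumeratio} with modulus $g_x(\cdot)=C_x\Phi_x(\cdot,r)^{\alpha(N)}$ uniformly for $r<r_x$. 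The corollary then follows from two facts: continuity of $t\mapsto\theta(\XX_t(x))$, which is a consequence of the uniformity in $r$ of \eqref{eq:ucvolumeratio}; and, by Fubini together with \eqref{eq:divergence bound} and the fact that $\haus^N$-a.e. point of $X$ is regular, the set $\{t\in[0,T]:\theta(\XX_t(x))=1\}$ has full $\mathscr{L}^1$-measure for $\haus^N$-a.e. $x$. Since $\theta\leq 1$ everywhere, continuity forces $\theta(\XX_t(x))=1$ for every $t\in[0,T]$.

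The main technical obstacle will be producing $\Phi_{x_0}$ with a modulus uniform in the scale $r$: the raw output of \autoref{corollary:dtcontrolcor} is genuinely scale-by-scale, and one must run a dyadic Vitali/maximal argument in $r$ and apply Lebesgue differentiation to transfer the averaged estimate into a pointwise statement holding at every sufficiently small scale simultaneously. This is exactly the bootstrap-across-scales step emphasised in the introductory discussion of the section, in the spirit of \cite{ColdingNaber12,KapovitchWilking11} and their $\RCD$ adaptation in \cite{Deng20}.
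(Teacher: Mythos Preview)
Your overall strategy (distance-distortion $\Rightarrow$ push-forward $\Rightarrow$ volume ratio $\Rightarrow$ continuity of $\theta$ $\Rightarrow$ regularity) is the right one, and the concluding paragraph deriving the corollary from \eqref{eq:ucvolumeratio} via Fubini and volume rigidity is exactly what the paper does. However, there is a genuine gap in the heart of the argument.

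The problem is your choice $S_1=S_2=B_r(x_0)$ in \autoref{corollary:dtcontrolcor}. This yields control on $dt^{\XX}_r(t)(y,z)$ for $y,z$ \emph{starting} in $B_r(x_0)$, and hence on how $\XX_t(B_r(x_0))$ deforms in time. But the quantity to estimate is $\haus^N(B_r(\XX_{t_1}(x_0)))/\haus^N(B_r(\XX_{t_2}(x_0)))$, and there is no a priori relationship between $B_r(\XX_{t_1}(x_0))$ and $\XX_{t_1}(B_r(x_0))$: points of the former need not arise as $\XX_{t_1}(y')$ with $y'\in B_r(x_0)$. Your later step, pushing $\haus^N\res B_r(\XX_{t_2}(x_0))$ forward by $\XX_{t_2,t_1}$, therefore has no connection to the estimate you derived. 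Lusin--Lipschitz does not rescue this, since it gives only one-sided control ($\XX_{t_1}(B_r(x_0)\cap E)\subset B_{Cr}(\XX_{t_1}(x_0))$, not the reverse inclusion), and only with a large constant $C$ rather than $1+o(1)$. Likewise, comparing both balls to $B_r(x_0)$ would produce a modulus depending on $t_1$ and $t_2$ separately, not on $|t_2-t_1|$.

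What is missing is the \emph{tracking set} device from \cite{ColdingNaber12,KapovitchWilking11,Deng20}. One takes $S_1=B_r(\XX_{t_1}(x))$ (the ball along the trajectory, not at time $0$) and $S_2=\XX_{t_1}(E_{x,\mu r})$, where $E_{x,\mu r}\subset B_{\mu r}(x)$ is a subset of the Lusin--Lipschitz good set with $\haus^N(E_{x,\mu r})\ge \tfrac12\haus^N(B_{\mu r}(x))$. By Lusin--Lipschitz, $\XX_t(E_{x,\mu r})\subset B_{M_3\mu r}(\XX_t(x))$ for \emph{every} $t$, so that any $z\in S_2$ serves as a proxy for the trajectory point $\XX_t(x)$ itself. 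The scale $\mu$ is chosen as a specific power of $\omega=t_2-t_1$ (namely $\mu\sim\omega^{1/2(1+2N)}$) to balance the proxy error $\sim\mu r$ against the factor $\mu^{-N}$ coming from $\haus^N(S_2)^{-1}$ when averaging. With these choices the segment inequality and the maximal function of $|\nabla_{\sym}b|$ along the trajectory give the scale-uniform bound directly, with no dyadic bootstrap in $r$ needed at this stage; the bootstrap across scales you allude to enters only later, in the proof of \autoref{globallipthm}.
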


\begin{proof}
Fix any $\eps > 0$. It suffices to show the claim holds for the elements of some $S \subseteq B_{R}(p)$ with $\haus^N(B_{R}(p) \setminus S) \leq \eps$.
\medskip

Fix $C_1(K,N,B_{R}(p))$ as in \autoref{prop:equivalenceGreendistance} (notice the dependence on $\lambda$ is dropped since we assume $\lambda = c|K|$). Fix some $g \in L^1([0,T]; L^2(B_R(p),\haus^N))$ as in \autoref{prop:esttrajbar} for $b$. Note
\begin{align}\label{eq:gstructuralbound}
\begin{split}
	&\hspace{0.5cm} \int_{0}^{T} \int_{B_R(p)} g_s(\XX_s(x))\, \di \haus^N(x)\, \di s\\
	 &\leq e^{DT} \int_{0}^{T} \int_{B_R(p)} g_s(x) \di \haus^N(x)\, \di s\\
	& \leq e^{DT}\sqrt{\haus^N(B_{R}(p))} \int_{0}^{T} \norm{g_s}_{L^2}\, \di s\\
	& \leq e^{DT}\sqrt{\haus^N(B_{R}(p))}c(B_{R}(p), K, N)\left(\int_{0}^{T} \norm{\nabla_{\sym}b_s}_{L^2}\, \di s+T\norm{\div b}_{L^{\infty}}\right)\\
	&\leq e^{DT}\sqrt{\haus^N(B_{R}(p))}c(B_{R}(p), K, N)(\sqrt{TH}+TD) =: C_2(B_{R}(p), K, N, H, D, T)\, ,
\end{split}
\end{align}
where we used \eqref{eq:divergence bound}, Cauchy-Schwarz inequality, the bound \eqref{eq:boundg} on $\int_{0}^{T} \norm{g_r}_{L^2}\di r$ and the definitions of $D, H$ from the beginning of the section.
\medskip

Let $E_1$ be the set of $x \in B_{R}(p)$ for which \eqref{eq:est1bar} holds for $s=0$ and $\haus^N$-a.e. $y$. By Fubini's theorem, $\haus^N(B_{R}(p) \setminus E_1) = 0$.\\ 
Let $E_2$ be the set of $x \in B_{R}(p)$ for which $\int_{0}^{T} g_r(\XX_r(x)) \di r \leq M_1$, where, by \eqref{eq:gstructuralbound} and Chebyshev's inequality, $M_1(B_{R}(p), K, N, H, D, T, \eps)$ is chosen sufficiently large so that $\haus^N(B_{R}(p) \setminus E_2) \leq \eps/2$. 

For each $t \in [0,T]$, define the maximal function $Mx_{t}$ of $|\nabla_{\sym} b_{t}|$ for $x \in X$ by 
\begin{equation*}
Mx_{t}(x) := \sup\limits_{0 < r \leq 16R} \fint_{B_{r}(x)} |\nabla_{\sym} b_{t}|(z) \di \haus^N(z)\, .
\end{equation*}
By the standard maximal inequality and using that $b_t$ is supported in $B_R(p)$, we have $\norm{Mx_{t}}_{L^2} \leq c(K, N, R)\norm{|\nabla_{\sym}b_{t}|}_{L^2}$. Therefore, using again \eqref{eq:divergence bound},
\begin{align}\label{eq:Mxstructuralbound}
\begin{split}
	\int_{0}^{T} \int_{B_R(p)} Mx^2_s(\XX_s(x))\, \di \haus^N(x)\, \di s
	 &\leq e^{DT} \int_{0}^{T} \int_{B_R(p)} Mx^2_s(x) \di \haus^N(x)\, \di s\\
	& \leq e^{DT}\int_{0}^{T} c^2\norm{|\nabla_{\sym}b_{t}|}_{L^2}^2\, \di s\\
	& \leq e^{DT}c^2H =: C_3(B_{R}(p), K, N, H, D, T)\, .
\end{split}
\end{align}
Let $E_3$ to be the set of $x \in B_{R}(p)$ for which $\int_{0}^{T} Mx^2_{s}(\XX_{s}(x)) \di s \leq M_2$, where, by \eqref{eq:Mxstructuralbound} and Chebyshev's inequality, $M_2(B_{R}(p), K, N, H, D, T, \eps)$ is chosen sufficiently large so that $\haus^N(B_{R}(p) \setminus E_3) \leq  \eps/2$.

Define $S'$ to be the set of density points of $E := E_1 \cap E_2 \cap E_3$ and set $M_3 := \max\{(C_1^2 e^{2M_1})^{\frac{1}{N-2}}, 1\}$.\\ 
For each $x \in S'$, let $r_x' > 0$ be sufficiently small so that 
\begin{equation}
\frac{\haus^N(E \cap B_{r}(x))}{\haus^N(B_{r}(x))} \geq \frac{1}{2}\, , \quad\text{for any $r \leq r_x'$}\, .
\end{equation} 
Then we choose $r_x := \min\{r_x', \frac{R}{M_3}\}$. Notice that, for any $r \leq r_x$, any $t \in [0,T]$ and $\haus^N$-a.e. $y \in E \cap B_{r}(x)$,
\begin{equation}\label{eq:badlipbound}
\dist(\XX_t(x), \XX_t(y)) \leq M_3\dist(x,y) \leq 1\, ,
\end{equation}
by \autoref{prop:esttrajbar} and \autoref{prop:equivalenceGreendistance}. 
\medskip

Fix $x \in S'$, $r \in (0, r_x]$ and $0 \leq t_1 < t_2 \leq T$. Without loss of generality, we will assume $T \leq 1$.\\ 
Define 
\begin{equation}
\omega := t_2 - t_1\, \quad\text{and}\quad \mu := \frac{1}{M_3}\omega^{\frac{1}{2(1+2N)}} \leq \frac{1}{M_3} \leq 1\, . 
\end{equation}
By the very definition of $r_x$ and since $\mu r \leq r_x$, there exists some set $E_{x,\mu r}$, which can be taken up to a set of measure $0$ equal to $S' \cap B_{\mu r}(x)$, such that
\begin{equation}\label{eq:Exmurbounds}
	\frac{\haus^N(E_{x,\mu r})}{\haus^N(B_{\mu r}(x))} \geq \frac{1}{2} \; \; \text{and}\; \; \XX_{t}(E_{x, \mu r}) \subseteq B_{M_3 \mu r}(\XX_{t}(x)) \text{ for any } t \in [0,T]\, .
\end{equation}
We will now use the trajectory of $\XX_{t_1}(E_{x,\mu r})$ under $\XX_{t_1,t_1+s}$ to keep track of the trajectory of a large subset of $B_{r}(\XX_{t_1}(x))$ under $\XX_{t_1, t_1+s}$. 

In view of \eqref{eq:semigroup property}, we may assume, up to altering $E_{x, \mu r}$ by a set of measure 0, that 
\begin{equation}\label{eq:Exmursemigroup}
	\XX_{t_1, t_1+s}(\XX_{t_1}(z)) = \XX_{t_1+s}(z)\, , \text{ for any } z \in E_{x, \mu r} \text{ and any } s \in [0, T-t_1]\, . 
\end{equation}

Using \autoref{corollary:dtcontrolcor} with $S_1 = B_{r}(\XX_{t_1}(x))$, $S_2 = \XX_{t_1}(E_{x, \mu r})$ and RLF $\XX_{t_1,t_1+\cdot}$, we have that for $\mathscr{L}^1$-a.e. $s \in [0, \omega]$, setting $t=t_1+s$ in order to simplify the notation,
\begin{align}\label{eq:finedtcontrol1}
	\begin{split}
	&\hspace{0.5cm}\frac{\di}{\di s} \int_{S_1 \times S_2} dt^{\XX_{t_1,t_1+\cdot}}_{r}(s)(y,z) \, \di (\haus^N \times \haus^N)(y,z)\\
	&\leq \int_{0}^{1} \int_{\Gamma_{r}(s)} \dist(\XX_{t_1,t}(y), \XX_{t_1,t}(z))|\nabla_{\sym}(b_{t})|(\gamma_{\XX_{t_1,t}(y), \XX_{t_1,t}(z)}(u)) \, \di (\haus^N \times \haus^N)(y,z) \, \di u\, ,
	\end{split}
\end{align}
where, by definition, $\Gamma_{r}(s) = \{(y,z)\in S_1 \times S_2 : dt^{\XX_{t_1, t_1+ \cdot}}_{r}(s)(y,z)<r\}$.

Observe that (recalling that we have set $t=t_1+s$):
\begin{itemize}
	\item[i)] for any $s \in [0,\omega]$, $\XX_{t_1,t}(S_2) = \XX_{t}(E_{x,\mu r}) \subseteq B_{M_3\mu r}(\XX_{t}(x))$, by \eqref{eq:Exmurbounds};
	\item[ii)] for any $y,z \in S_1 \times S_2$, $\dist(y,z) \leq (M_3 \mu +1)r$ since $S_2 \subseteq B_{M_3 \mu r}(\XX_{t_1}(x))$, by \eqref{eq:Exmurbounds} again.
\end{itemize}
Therefore, for any $s \in [0,\omega]$ and $(y,z) \in \Gamma_{r}(s)$,
\begin{align*}
	\dist(\XX_{t_1,t}(y), \XX_{t}(x)) &\leq \dist(\XX_{t_1, t}(y), \XX_{t_1,t}(z)) + \dist(\XX_{t_1,t}(z), \XX_{t}(x))\\
						&\leq \dist(y,z) + |\dist(\XX_{t_1, t}(y), \XX_{t_1,t}(z)) - \dist(y,z)|+ \dist(\XX_{t_1,t}(z), \XX_{t}(x))\\
						&\leq (M_3 \mu + 1)r + r + M_3 \mu r \leq 4r \leq 4R\, . 
\end{align*}
Hence 
\begin{equation*}
(\XX_{t_1, t}, \XX_{t_1, t})(\Gamma_{r}(s)) \subseteq B_{4r}(\XX_{t}(x))\times B_{4r}(\XX_{t}(x)) \subseteq B_{4R}(\XX_{t}(x))\times B_{4R}(\XX_{t}(x))\, .
\end{equation*}

Now we can estimate, starting from \eqref{eq:finedtcontrol1},
\begin{align}\label{eq:finedtcontrol2}
	\begin{split}
	&\hspace{0.5cm}\int_{0}^{1} \int_{\Gamma_{r}(s)} \dist(\XX_{t_1,t}(y), \XX_{t_1,t}(z))|\nabla_{\sym}(b_{t})|(\gamma_{\XX_{t_1,t}(y), \XX_{t_1,t}(z)}(u)) \, \di (\haus^N \times \haus^N)(y,z) \, \di u\\
	&\leq e^{DT} \int_{0}^{1} \int_{(\XX_{t_1,t}, \XX_{t_1,t})(\Gamma_{r}(s))} \dist(y,z)|\nabla_{\sym}(b_{t})|(\gamma_{y,z}(u)) \, \di (\haus^N \times \haus^N)(y,z) \, \di u\\
	&\leq e^{DT} \int_{0}^{1} \int_{B_{4r}(\XX_{t}(x)) \times B_{4r}(\XX_{t}(x))} \dist(y,z)|\nabla_{\sym}(b_{t})|(\gamma_{y,z}(u)) \, \di (\haus^N \times \haus^N)(y,z) \, \di u\\
	&\leq e^{DT}c(K,N)r\haus^N(B_{4r}(\XX_{t}(x)))\int_{B_{4r}(\XX_{t}(x))}|\nabla_{\sym}b_{t}| \, \di \haus^N\\
	&\leq e^{DT}c(K,N)r\left(\haus^N(B_{4r}(\XX_{t}(x)))\right)^2\fint_{B_{4r}(\XX_{t}(x))}|\nabla_{\sym}b_{t}| \, \di \haus^N\\
	&= c(K, N, D, T)r\left(\haus^N(B_{4r}(\XX_{t}(x)))\right)^2\fint_{B_{4r}(\XX_{t}(x))}|\nabla_{\sym}b_{t}| \, \di \haus^N\, ,
	\end{split}
\end{align}
where we used \eqref{eq:divergence bound} for the second line and the Cheeger-Colding segment inequality (see \cite{CheegerColding96} for the original formulation and \cite{VonRenesse08}, \cite[Theorem 3.22]{Deng20} for this framework) for the fourth line.\\ 
Therefore,
\begin{align}\label{eq:helpful2}
	\begin{split}
		&\hspace{0.5cm}\int_{S_1 \times S_2} dt^{\XX_{t_1, t_1+\cdot}}_{r}(\omega)(y,z) \, \di (\haus^N \times \haus^N)(y,z)\\
		&= \int_{0}^{\omega} \left[\frac{\di}{\di s} \int_{S_1 \times S_2} dt^{\XX_{t_1,t_1+ \cdot}}_{r}(s)(y,z) \, \di (\haus^N \times \haus^N)(y,z)\right] \, \di s\\
		&\leq \int_{0}^{\omega}\left[cr\left(\haus^N(B_{4r}(\XX_{t_1+s}(x))\right)^2 \fint_{B_{4r}(\XX_{t_1+s}(x))}|\nabla_{\sym}b_{t_1+s}| \, \di \haus^N\right] \, \di s\\
		&\leq c(B_R(p), K, N, H, D, T, \eps)r\left(\haus^N(B_{r}(x))\right)^2  \int_{0}^{\omega}\fint_{B_{4r}(\XX_{t_1+s}(x))}|\nabla_{\sym}b_{t_1+s}| \, \di \haus^N \, \di s\\
		&\leq cr\left(\haus^N(B_{r}(x))\right)^2\sqrt{M_2}\sqrt{\omega} = c(B_R(p), K, N, H, D, T, \eps)r\left(\haus^N(B_{r}(x))\right)^2\sqrt{\omega}\, .
	\end{split}
\end{align}
Above, we used the Bishop-Gromov inequality and $N$-Ahlfors regularity of noncollapsed $\RCD(K,N)$ spaces for the fourth line and Cauchy-Schwarz, the fact that $x \in S' \subseteq E_3$, the definition of $M_2$ and that $4r \leq 4$ for the fifth line. 

Using \eqref{eq:Exmurbounds}, \eqref{eq:divergence bound} and the Bishop-Gromov inequality, we have that
\begin{equation}\label{eq:helpful}
	\frac{\haus^N(S_2)}{\haus^N(B_{r}(x))} \geq \frac{e^{-DT}\haus^N(E_{x,\mu r})}{\haus^N(B_{r}(x))} \geq \frac{e^{-DT}\haus^N(B_{\mu r}(x))}{2\haus^N(B_{r}(x))} \geq c(K, N, D, T)\mu^{N}\, .
\end{equation} 
Combining \eqref{eq:helpful} with \eqref{eq:helpful2}, we can find $z \in S_2 = \XX_{t_1}(E_{x,\mu r})$ so that
\begin{align}
\nonumber	\int_{S_1} dt^{\XX_{t_1, t_1+\cdot}}_{r}(\omega)(y,z) \, \di \haus^N(y) 
	&\leq c(B_R(p), K, N, H, D, T, \eps)r\haus^N(B_{r}(x))\mu^{-N}\sqrt{\omega}\\
\nonumber											&= cr\haus^N(B_{r}(x))(\frac{1}{M_3}\omega^{\frac{1}{2(1+2N)}})^{-N}\sqrt{\omega}\\
											&= c(B_R(p), K, N, H, D, T, \eps)r\haus^N(B_{r}(x))\omega^{\frac{1+N}{2(1+2N)}},\label{eq:boundintdist}
\end{align}
where in the last line we used the dependence of $M_3$.\\ 
Using again the $N$-Ahlfors regularity of $X$, which says that the measure of $B_{r}(\XX_{t_1}(x))$ is comparable to that of $B_{r}(x)$,, \eqref{eq:boundintdist} and Chebyshev's inequality, we can find some subset $S_1' \subseteq S_1 = B_{r}(\XX_{t_1}(x))$ with
\begin{equation}\label{eq:fineS1'bounds1}
\frac{\haus^N(S_1')}{\haus^N(B_r(\XX_{t_1}(x)))} \geq 1 - \mu^N
\end{equation}
and
\begin{align}\label{eq:fineS1'bounds2}
\begin{split}
dt^{\XX_{t_1,t_1+\cdot}}_{r}(\omega)(y,z) &\leq \frac{cr\omega^{\frac{1+N}{2(1+2N)}}}{\mu^N}\\
					&= c(B_R(p), K, N, H, D, T, \eps)r\omega^{\frac{1}{2(1+2N)}}=c\mu r < r \, ,
\end{split}
\end{align}
for any $y \in S_1'$ and any sufficiently small $\omega$ depending on $B_R(p), K, N, H, D, T$ and $\eps$.

Since for any $y \in S_1'$ we also have $\dist(y,z) \leq (M_3\mu+1)r$, we can estimate
\begin{align}\label{eq:fineS1'bounds3}
\begin{split}
	\dist(\XX_{t_1,t_2}(y),\XX_{t_2}(x)) &\leq \dist(\XX_{t_1, t_2}(y), \XX_{t_1, t_2}(z)) + \dist(\XX_{t_1, t_2}(z), \XX_{t_2}(x))\\
				&\leq \dist(y,z) + |\dist(\XX_{t_1, t_2}(y), \XX_{t_1, t_2}(z)) - \dist(y,z)| +  \dist(\XX_{t_1, t_2}(z), \XX_{t_2}(x))\\
				&\leq \dist(y,z) + dt^{\XX_{t_1,t_1+\cdot}}_{r}(\omega)(y,z) +  \dist(\XX_{t_1, t_2}(z), \XX_{t_2}(x))\\
				&\leq (M_3\mu+1)r+c\mu r + M_3\mu r\\
				&= (1+c(B_R(p), K, N, H, D, T, \eps)\mu)r\, .
\end{split}
\end{align}
Above we used that $dt^{\XX_{t_1,t_1+\cdot}}_{r}(\omega)(y,z) < r$ for $y \in S_1'$ in the third line and the definition of $\mu$ and the dependence of $M_3$ in the last line. In other words, $B_{r}(\XX_{t_1,t_2}(S_1')) \subseteq B_{(1+c\mu)r}(\XX_{t_2}(x))$.\\ 
This inclusion immediately gives the following volume estimate:
\begin{align*}
	\frac{\haus^N(B_{r}(\XX_{t_1}(x))}{\haus^N(B_{r}(\XX_{t_2}(x)))} 
	&\leq \frac{1}{1-2\omega}\frac{\haus^N(S_1')}{\haus^N(B_r(\XX_{t_2}(x)))}\\
										&\leq \frac{1}{1-\omega}e^{D\omega}\frac{\haus^N(\XX_{t_1,\omega'}(S_1'))}{\haus^N(B_r(\XX_{t_2}(x)))}\\
										&\leq \frac{1}{1-\omega}e^{D\omega}\frac{\haus^N(B_{(1+c\mu)r}(\XX_{t_2}(x)))}{\haus^N(B_r(\XX_{t_2}(x)))}\\
										&\leq \frac{1}{1-\omega}e^{D\omega}(1+c(K,N,R)c\mu)^{N}\\
										&= \frac{1}{1-\omega}e^{D\omega}(1+c\omega^{\frac{1}{2(1+2N)}})^{N},
\end{align*}
where we used \eqref{eq:fineS1'bounds1} for the first line, \eqref{eq:divergence bound} for the second line, and the Bishop-Gromov inequality for the fourth line.\\
This yields a bound of the form
 \begin{equation}\label{eq:ucvolumeratio+}
\frac{\haus^N(B_r(\XX_{t_1}(x)))}{\haus^N(B_r(\XX_{t_2}(x)))}\le 1+ g(\omega)\, ,\quad\text{for any $0<r<r_x$ and any $0\le t_1\le t_2\le T$}\, ,
\end{equation}
where $\omega=t_2-t_1$ and $g$ is a modulus of continuity independent of $r$.
\medskip

To establish the bound in the other direction, we will consider the RLF $(\YY_s)$ associated with the vector field $(-b_{t_2 - s})_{s \in[0, t_2]}$, basically reversing time in the argument.

By \cite[Proposition 3.12]{Deng20}, we may alter $E_{x, \mu r}$ up to a set of measure $0$ so that for any $z \in E_{x, \mu r}$, for any $s \in [0,t_2]$, we have $\YY_{s}(\XX_{t_2}(z)) = \XX_{t_2-s}(z)$. As such, $\YY_s(\XX_{t_2}(E_{x, \mu r})) = \XX_{t_2-s}(E_{x, \mu r})$. In particular, $\YY_s(\XX_{t_2}(E_{x, \mu r})) \subseteq B_{M_3 \mu r}(\XX_{t_2-s}(x))$ for any $s \in [0, t_2]$.

Then we can use the trajectory of $\XX_{t_2}(E_{x, \mu r})$ under $\YY_s$ to control the trajectory of a large portion of $B_{r}(\XX_{t_2}(x))$ under $\YY_s$ as we did previously. This will obtain a lower bound of the form
\begin{equation}
\frac{\haus^N(B_{r}(\XX_{t_1}(x)))}{\haus^N(B_{r}(\XX_{t_2}(x)))}\ge 1+ g(\omega)\, ,\quad\text{for any $0<r<r_x$ and any $0\le t_1\le t_2\le T$}\, ,
\end{equation}
for another modulus of continuity $g$ independent of $r$, which completes he proof of \eqref{eq:ucvolumeratio}. 

\medskip

Passing to the limit in \eqref{eq:ucvolumeratio} as $r\downarrow 0$, we conclude that $[0,T]\ni t \mapsto \theta(\XX_t(x))$ is continuous for $\haus^N$-a.e. $x\in B_R(p)$, where $\theta(x)$ denotes the density at $x$, see \eqref{density}.\\
Moreover, combining the bounded compressibility \eqref{eq:compressibility} with Fubini's theorem, we know that for $\haus^N$-a.e. $x \in B_{R}(p)$, $\XX_{t}(x)$ is a regular point for $\mathscr{L}^1$-a.e. $t \in [0,T]$. Equivalently, $\theta(\XX_t(x))=1$ for $\leb^1$-a.e. $t\in[0,T]$.\\
Hence $\theta(\XX_t(x))=1$ for any $t\in[0,T]$ and therefore $\XX_t(x)$ is a regular point for any $t\in [0,T]$.
\end{proof}



\subsection{A simple approach for spaces without boundary}
In this section we present a simpler proof of \autoref{measurecontinuity} in the case of spaces without boundary. It is based on the principle that the bounded compressibility assumption, coupled with an integrability bound on the vector field, is enough to guarantee avoidance of sets with codimension two, in a strong enough sense. As such, it does not require a careful analysis of the regularity of Lagrangian flows, but a better understanding of the fine structure of noncollapsed $\RCD(K,N)$ spaces. Unfortunately, it is not suited for dealing with codimension one singularities, such as boundary points.
\medskip

Let us recall that any noncollapsed $\RCD(K,N)$ m.m.s. $(X,\dist,\haus^N)$ can be decomposed as $X=\mathcal{R}\cup \mathcal{S}$ where $\mathcal{R}=\{ x\in X:\, \theta(x)=1\}$ is the regular set, while $\mathcal{S}$ stratifies as
\begin{equation}
	\mathcal{S}^0\subset \ldots \subset \mathcal{S}^{N-2}\subset \mathcal{S}^{N-1} = \mathcal{S}\, ,
\end{equation}  
where $x\in\mathcal{S}^k$ if and only if no tangent cone of $(X,\dist,\haus^N)$ at $x$ splits a factor $\setR^{k+1}$.

Moreover, we say that $(X,\dist,\haus^N)$ has empty boundary (in formula $\partial X=\emptyset$) if $\mathcal{S}^{N-1}\setminus \mathcal{S}^{N-2}=\emptyset$, see \cite{BrueNaberSemola20} after \cite{DePhilippisGigli17, KapovitchMondino19}.
\medskip

We are going to need the notion of \emph{quantitative singular stratum}, as introduced in \cite{CheegerNaber13} (see also \cite{AntonelliBrueSemola} for the present framework). 
\begin{definition}
For any $\eta>0$, let us define the $k^{th}$-effective stratum $\mathcal{S}^k_{\eta}$ by
\begin{equation}\label{eq:quantsing}
\mathcal{S}^{k}_{\eta}:= \set{y|\dist_{GH}(B_s(y),B_s\left((0,z^*)\right))\ge\eta s\quad\text{for all } \setR^{k+1}\times C(Z)\quad\text{and all } 0< s\le1}\, ,
\end{equation}
where $B_s\left((0,z^*)\right)$ denotes the ball in $\setR^{k+1}\times C(Z)$ centered at $(0,z^*)$ with radius $s$ and $C(Z)$ denotes any metric measure cone over an $\RCD(N-k-3,N-k-2)$ m.m.s. $(Z,\dist_Z,\haus^{N-k-1})$.

\end{definition}

For the sake of clarity, let us also recall that 
\begin{equation}\label{eq:stratumeta}
\mathcal{S}^k=\bigcup_{\eta>0}\mathcal{S}^k_{\eta}\, .
\end{equation}

\medskip
The following argument is based on \cite{Aizenman}.

\begin{proposition}
	Let $(X,\dist,\haus^N)$ be an $\RCD(K,N)$ m.m.s. and let $p>2$. Any regular Lagrangian flow $\XX$ of a velocity field $b \in L^1([0,T]; L^p(TX))$ satisfies the following property: for $\haus^N$-a.e. $x\in X$ it holds $\XX_t(x) \in X\setminus \mathcal{S}^{N-2}$ for any $t\in [0,T]$.
	
    In particular, if $\partial X=\emptyset$, then for $\haus^N$-a.e. $x\in X$ it holds that $\XX_t(x)$ is a regular point for any $t\in[0,T]$.	
\end{proposition}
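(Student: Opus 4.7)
The plan is to run an Aizenman-style capacity avoidance argument exploiting the fact that $\mathcal{S}^{N-2}$ has codimension two. By \eqref{eq:stratumeta} and countable subadditivity, it suffices to show, for each fixed $\eta>0$, that the set of initial data whose trajectory meets the quantitative stratum $\mathcal{S}^{N-2}_\eta$ during $[0,T]$ is $\haus^N$-negligible. The key geometric input I would invoke is a Minkowski-type bound $\haus^N(B_r(\mathcal{S}^{N-2}_\eta)\cap K)\le C(K,\eta,N) r^{2}$ on any compact $K$, which reflects codimension two of the quantitative singular stratum in noncollapsed $\RCD(K,N)$ spaces (see \cite{AntonelliBrueSemola}). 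After localizing, I may assume trajectories stay in some large ball $B_{R'}(p)$.

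Fix $\eta>0$ and introduce the hitting set
\[
A_r:=\{x\in B_{R'}(p):\ \XX_t(x)\in B_r(\mathcal{S}^{N-2}_\eta)\text{ for some }t\in[0,T]\}.
\]
Split $A_r=A_r^{\mathrm{nt}}\cup A_r^{\mathrm{tr}}$ according to whether the trajectory, after first entering the $r$-tube at some time $t_0$, later exits the $2r$-tube at some $t_1\le T$ or stays trapped in the $2r$-tube up to time $T$. The trapped piece satisfies $A_r^{\mathrm{tr}}\subseteq \XX_T^{-1}(B_{2r}(\mathcal{S}^{N-2}_\eta))$, so by \eqref{eq:compressibility} and the Minkowski bound $\haus^N(A_r^{\mathrm{tr}})\lesssim r^{2}$. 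For the non-trapped piece, applying condition (3) in \autoref{def:Regularlagrangianflow} to the $1$-Lipschitz function $y\mapsto \dist(y,\XX_{t_0}(x))$ and using that $\dist(\XX_{t_1}(x),\XX_{t_0}(x))\ge r$ by continuity yields
\[
r\le \int_{0}^{T} |b_s|(\XX_s(x))\,\mathbf{1}_{B_{2r}(\mathcal{S}^{N-2}_\eta)}(\XX_s(x))\,\di s.
\]

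Integrating this pointwise estimate over $A_r^{\mathrm{nt}}$, swapping integrals by Fubini, transferring from pathwise to ambient space via the bounded compressibility \eqref{eq:compressibility}, and closing with H\"older in space gives
\[
r\,\haus^N(A_r^{\mathrm{nt}})\le L\int_{0}^{T} \haus^N(B_{2r}(\mathcal{S}^{N-2}_\eta))^{(p-1)/p}\norm{b_s}_{L^p}\,\di s \lesssim r^{2(p-1)/p}\norm{b}_{L^1([0,T];L^p)}.
\]
Hence $\haus^N(A_r^{\mathrm{nt}})\lesssim r^{(p-2)/p}\to 0$ as $r\downarrow 0$, and this is exactly where $p>2$ is used: after H\"older dilution, the codimension exponent $2$ must still beat the naked distance cost $1$. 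Combined with the bound on $A_r^{\mathrm{tr}}$, we get $\haus^N(\bigcap_{r>0} A_r)=0$, which contains the set of initial conditions whose trajectory hits $\mathcal{S}^{N-2}_\eta$. Taking a countable union over $\eta=1/n$ proves the main claim. The addendum for $\partial X=\emptyset$ is then immediate, since in that case $\mathcal{S}^{N-1}\setminus\mathcal{S}^{N-2}=\emptyset$, so $\mathcal{S}=\mathcal{S}^{N-2}$ and avoiding $\mathcal{S}^{N-2}$ is equivalent to staying in the regular set $\mathcal{R}$.

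The main obstacle I anticipate is the rigorous verification of the codimension-two Minkowski content estimate $\haus^N(B_r(\mathcal{S}^{N-2}_\eta)\cap K)\lesssim r^{2}$ in the present $\RCD$ framework; this is the essential geometric input and relies on $\epsilon$-regularity-type coverings of the quantitative strata. Once this is in hand, the remainder is essentially H\"older bookkeeping, and the threshold $p=2$ appears as the critical exponent matching the codimension of the singular stratum being avoided.
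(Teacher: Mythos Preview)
Your argument is correct in spirit and arrives at the same conclusion, but it takes a genuinely different route from the paper. The paper does not split into trapped and crossing trajectories. Instead it fixes a threshold $r_0$ and, on the set $F$ of initial data whose trajectory starts at distance $\ge r_0$ from $\mathcal{S}^{N-2}_\eta$ but reaches distance $\le\eps$, tests the flow against the single function $f(y)=\log(r_0/y)$ precomposed with $\dist_\eta:=\dist(\cdot,\mathcal{S}^{N-2}_\eta)$. This yields
\[
\log(r_0/\eps)\,\haus^N(F)\ \le\ L\,\norm{b}_{L^1(L^p)}\Bigl(\int \dist_\eta^{-p'}\,\mathbf 1_{\{\dist_\eta\le r_0\}}\di\haus^N\Bigr)^{1/p'},
\]
and the right-hand side is finite by Cavalieri and the Minkowski bound; one then lets $\eps\downarrow 0$. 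Your scale-by-scale Chebyshev argument and the paper's single logarithmic test function are two standard incarnations of the same Aizenman principle; yours is a bit more hands-on, the paper's packages all scales at once and makes the role of $p'<2$ slightly more transparent.

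Two small repairs to your write-up. First, in the RLF \autoref{def:Regularlagrangianflow} the null set in (3) depends on the chosen Lipschitz function, so testing against $y\mapsto\dist(y,\XX_{t_0(x)}(x))$, with $t_0$ depending on $x$, is not directly justified. Use instead the fixed $1$-Lipschitz function $\dist_\eta$ (or the metric-speed inequality $|\dot\XX_t|\le|b_t|\circ\XX_t$), which gives the same crossing estimate $r\le\int_0^T|b_s|(\XX_s(x))\mathbf 1_{\{\dist_\eta\le 2r\}}(\XX_s(x))\di s$. Second, the Minkowski estimate available from \cite{AntonelliBrueSemola} in this generality is $\haus^N(\{\dist_\eta<r\}\cap K)\le C(K,\eta)\,r^{2-\eta}$, not the sharp $r^2$ you quote. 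This is harmless for your argument: plugging $r^{2-\eta}$ into your H\"older step gives $\haus^N(A_r^{\mathrm{nt}})\lesssim r^{(2-\eta)/p'-1}$, and since $p>2$ forces $p'<2$ you may take $\eta<2-p'$ so that the exponent is positive. Your flagged ``main obstacle'' is thus already settled in the literature, with the small caveat that the exponent carries an $\eta$-loss.
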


\begin{proof}
	Let $\eta>0$, $\eps>0$, $r_0>0$, $R\ge 1$ and $M>1$ be fixed.\\ 
	Let $\mathcal{S}^{N-2}_\eta$ be the quantitative singular strata of codimension two (see \eqref{eq:quantsing}) and let $\dist_{ \mathcal{S}^{N-2}_\eta }$ denote the distance function from $\mathcal{S}^{N-2}_\eta $. In order to ease the notation we shall abbreviate $\dist_{\eta}:=\dist_{ \mathcal{S}^{N-2}_\eta }$.
	
	Let us assume $\eps \le r_0/2$ and set
	\[
	\tau_\eps(x) := 
	\begin{cases}
		\sup\set{t\in [0,T]\ :\ \dist_{\eta }(\XX_s(x))>\eps\quad \forall\, s\in [0,t]} & \text{if } \dist_{\eta}(x)>\eps
		\\
		\eps & \text{if } \dist_{\eta }(x)\le \eps,
	\end{cases} 
	\]
	and
	\begin{equation}\label{eq:defF}
	F := \set{x\in B_R(p) :\ \XX_t(x)\in B_{RM}(p)\ \forall\, t\in [0,T]\ \text{and}\ \dist_{ \eta }(x)\ge r_0, \ \tau_\eps(x)<T}\, ,
	\end{equation}
	for a given $p\in X$.
\medskip
	
	For any nonnegative function $f\in C^{\infty}(\setR)$ such that $f\equiv 0$ on $[r_0,\infty)$, using that $|\nabla \dist_{\eta}|=1$-a.e. and the bounded compressibility \eqref{eq:compressibility}, we can compute
	\begin{align}
	\nonumber	f(\eps)\haus^N(F)  & = \int_F \abs{f\circ \dist_{ \eta }(\XX_{\tau_\eps(x)}(x)) - f\circ \dist_{ \eta }(x)} \di \haus^N(x)\\
	\nonumber	& \le \int_F \int_0^{\tau_\eps(x)} |b_s|(\XX_s(x)) |f'\circ \dist_{ \eta }|(\XX_s(x)) \di s \di \haus^N(x)\\
		& \le L \int_0^T \int_{B_{RM}(p)} |b_s| |f'\circ\dist_{ \eta }| \di \haus^N \di s\, . \label{eq:estuse}
	\end{align}	
	A simple approximation argument allows us to consider in \eqref{eq:estuse} the test function
\begin{equation}
f(y):=
\begin{cases}
\log(r_0/y) & \text{if $y<r_0$}\\
0 & \text{if $y\ge r_0$}\, .
\end{cases}
\end{equation}	
Then we obtain
	\begin{align}
\nonumber		\log(r_0/\eps) \haus^N(F) & \le L \int_0^T \int_{\set{\dist_{\eta}  \le r_0}\cap B_{RM}(p)} |b_s|(x) \frac{1}{\dist_{\eta}(x) } \di \haus^N(x) \di s\\
\nonumber		& \le L\left( \int_0^T \norm{b_s}_{L^p}\di s\right) \left(  \int_{\set{\dist_{ \eta}  \le r_0}\cap B_{RM}(p)} \dist_{ \eta }^{-p'}  \di \haus^N \right)^{1/p'}\\
		& \le L \norm{b}_{L^1(L^p)} \left(    \int_{r_0^{-p'}}^\infty \haus^N(\set{\dist_{\eta } < \lambda^{-1/p'}}\cap B_{RM}(p)) \di \lambda  \right)^{1/p'}\, ,\label{eq:integralbound}
	\end{align}
where $1/p+1/p'=1$ and we applied H\"older's inequality at the second line and Cavalieri's formula at the third one.

Observe that, by \cite[Theorem 2.4]{AntonelliBrueSemola} (see also eq. (2.6) therein), we can bound
\begin{equation}
\haus^N\left(\set{\dist_{\eta } < \lambda^{-1/p'}}\cap B_{RM}(p)\right) \le c(K,N,MR,\eta,r_0,p) \lambda^{-\frac{2-\eta}{p'}}\, ,\quad\text{for any $\lambda>r_0^{-p'}$}\, .
\end{equation}
Since by assumption $p>2$, it holds that $p'<2$. Hence, if $\eta<\eta_0$, we have $(2-\eta)/p'>1$. Therefore
\begin{equation}
C(\eta):= \int_{r_0^{-p'}}^\infty \haus^N\left(\set{\dist_{\eta } < \lambda^{-1/p'}}\cap B_{RM}(p)\right) \di \lambda  <\infty\, .
\end{equation}
In particular, by \eqref{eq:integralbound}, we obtain that for $\eta<\eta_0$,
\begin{equation}
\log(r_0/\eps) \haus^N(F) \le L\norm{b}_{L^1(L^p)}C(\eta)\, ,
\end{equation}
independently of $\eps$.

	Letting $\eps\downarrow 0$, we deduce that, for any $\eta<\eta_0$,
	\[
	\haus^N(\set{x\in B_R(p)\ :\ \XX_t(x)\in B_{RM}(p)\ \forall t\in [0,T]\ \text{and}\  \XX_t(x)\in \mathcal{S}^{N-2}_\eta \quad \text{for some $t\in [0,T]$}}) = 0\, ,
	\]
	which easily gives the sought conclusion, taking into account \eqref{eq:stratumeta} and letting $M\to \infty$.
\end{proof}

\section{Proof of \autoref{th:main}}\label{sec:proof1}

The general strategy will be to start from \autoref{prop:esttrajbar} and turn it into an infinitesimal estimate for the lower/upper approximate slopes of the RLF relying on \autoref{corollary:Green convergence}. A priori, such an estimate would involve the ratio between the densities at the two points connected by the RLF, and we will use \autoref{measurecontinuity} to get rid of this dependence. 

In the end we will show how the technical \autoref{Assumption} can be removed, via a tensorization argument that has already been used in \cite{BrueSemola18,BrueSemolaahlfors}.
\medskip

We start with two preliminary lemmas.

\begin{lemma}\label{lemma:Lusincontinuity}
	Let $(X,\dist,\meas)$ be a locally compact metric space endowed with a $\sigma$-finite reference measure, and let $f\in L^1([0,1]\times X)$. Then, for any $\eps>0$, there exists a Borel set $E\subset X$ with $\meas(X\setminus E)<\eps$ such that, for any $t\in [0,1]$, the function $x\mapsto\int_0^tf_r(x)\di r$ is continuous in $E$.	
\end{lemma}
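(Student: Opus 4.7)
\emph{The plan} is to regard the one-parameter family $\{g_t\}_{t\in[0,1]}$, with $g_t(x) := \int_0^t f_r(x)\,\di r$, as a single Banach-valued function $G: X \to C([0,1])$ defined by $G(x)(t) := g_t(x)$, and then apply Lusin's theorem for strongly measurable Banach-space-valued maps.

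\emph{First}, I would check that $G$ is well-defined as a $C([0,1])$-valued strongly measurable map with an $L^1$ majorant. By Fubini's theorem, $F(x) := \int_0^1 \abs{f_r(x)}\,\di r$ lies in $L^1(X,\meas)$, so $F(x) < \infty$ for $\meas$-a.e. $x$; for such $x$ the function $t \mapsto g_t(x)$ is absolutely continuous, hence $G(x) \in C([0,1])$ with $\norm{G(x)}_\infty \le F(x)$. To establish strong measurability of $G$ I would approximate $f$ in $L^1([0,1]\times X)$ by a sequence of continuous compactly supported functions $f^n$ (using density of $C_c$ in $L^1$ for Radon measures on locally compact metric spaces). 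The induced maps $G^n: X \to C([0,1])$, $G^n(x)(t) := \int_0^t f^n_r(x)\,\di r$, are continuous, and $\norm{G^n(x)-G(x)}_\infty \le \int_0^1 \abs{f^n_r - f_r}(x)\,\di r =: h^n(x)$, with $\norm{h^n}_{L^1(X)} \to 0$ by Fubini. A subsequence $G^{n_k}$ therefore converges to $G$ pointwise $\meas$-a.e. in $C([0,1])$, giving strong measurability of $G$.

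\emph{Next}, I would invoke the Lusin theorem for strongly measurable Banach-valued functions on a locally compact space with a $\sigma$-finite Radon measure: there exists a Borel set $E \subset X$ with $\meas(X\setminus E) < \eps$ such that $G|_E: E \to C([0,1])$ is continuous. Fixing any $t \in [0,1]$, if $x_n \to x$ in $E$ then $G(x_n) \to G(x)$ uniformly in $s \in [0,1]$; in particular $g_t(x_n) = G(x_n)(t) \to G(x)(t) = g_t(x)$, yielding continuity of $g_t|_E$ simultaneously for every $t \in [0,1]$.

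\emph{The hard part} is that the exceptional set $E$ must work for every $t$ at once: a naive pointwise-in-$t$ application of scalar Lusin would only produce a set $E_t$ depending on $t$, which would be useless here. The trick of packaging the whole family $\{g_t\}_t$ into a single $C([0,1])$-valued map $G$ and then applying the vector-valued Lusin theorem is precisely what eliminates the $t$-dependence.
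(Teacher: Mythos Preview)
Your proof is correct and essentially identical to the paper's: both approximate $f$ in $L^1([0,1]\times X)$ by continuous $f^n$, extract a subsequence along which $\int_0^1|f^n_r-f_r|(\cdot)\,\di r\to 0$ $\meas$-a.e., and then upgrade to uniform convergence on a set $E$ of large measure. The only difference is packaging---you invoke the vector-valued Lusin theorem for $G:X\to C([0,1])$, while the paper applies Egorov's theorem directly to the scalar sequence $x\mapsto\int_0^1|f^n_r-f_r|(x)\,\di r$ and then observes that uniform limits of continuous functions are continuous, which is precisely how one proves the vector-valued Lusin theorem in this situation.
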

\begin{proof}
	We assume without loss of generality that $(X,\dist)$ is compact. The general case can be handled writing $X$ as a countable union of compact sets $K_n$ with finite measure, applying the construction described below to find good sets $E_n\setminus K_n$ such that $\meas(K_n\subset E_n)\le \eps/2^n$ and setting $E:=\cup_n E_n$.
\medskip
	
	Let $(f^n_r)_{n}\subset C(X,\dist)$ such that $\lim_{n\to \infty}\int_0^1\norm{f_r^n-f_r}_{L^1}\di r=0$. Up to extract a subsequence, for $\meas$-a.e. $x\in X$ we have
	\begin{equation*}
		\lim_{n\to \infty}\abs{\int_0^tf^n_r(x)\di r-\int_0^tf_r(x)\di r }\le \lim_{n\to \infty} \int_0^1|f^n_r(x)-f_r(x)|\di r=0\, ,
		\quad \text{for any $t\in [0,T]$}\, .
	\end{equation*}
	By Egorov theorem we can find a closed set $E$ such that $\meas(X\setminus E)<\eps$ and 
	\begin{equation*}
		\lim_{n\to \infty}\sup_{x\in E}\int_0^1|f^n_r(x)-f_r(x)|\di r\to 0\, .
	\end{equation*}
	The conclusion follows recalling that uniform limits of continuous functions are continuous. 
\end{proof}

Thanks to \autoref{lemma:doublymaximal} we will get the expected factor $t$ at the exponent in the bounds for the slope of regular Lagrangian flows of time independent Sobolev vector fields, see \eqref{eq:mainresultautonomous}. Independence of time is a crucial assumption for its proof to work.

\begin{lemma}\label{lemma:doublymaximal}
	Let $(X,\dist,\meas)$ be an $\RCD(K,N)$ m.m.s..
	Let $g\in L^2(X,\meas)$ be nonnegative, $b\in H^{1,2}_{C,s}(TX)\cap L^{\infty}(TX)$ with $\norm{\div b}_{L^{\infty}}\le D$ and let $\XX_t$ be the unique Regular Lagrangian flow of $b$. Let us set
	\begin{equation}\label{eq:introH}
		h(x):=\sup_{0<s\le T}\frac{1}{s}\int_0^s g(\XX_r(x))\di r\, .
	\end{equation}
	Then $\norm{h}_{L^2}\le C(D,T)\norm{g}_{L^2}$.
\end{lemma}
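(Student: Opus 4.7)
The quantity $h(x)$ is a one-sided Hardy--Littlewood-type maximal average of $g$ taken along the trajectory $r\mapsto \XX_r(x)$. The strategy is to reduce the sought bound on $\norm{h}_{L^2}$ to the classical $L^2$-boundedness of the one-sided maximal operator on $\setR$, applied pointwise in $x$, and then to exchange the roles of $x$ and $t$ via the autonomous semigroup property together with the two-sided compressibility estimate \eqref{eq:divergence bound}. Crucially, the autonomy of $b$ is what allows us to write $\XX_r(\XX_t(x))=\XX_{r+t}(x)$; extending the RLF from $[0,T]$ to $[0,2T]$ by iterating (possible since $b\in L^\infty$ has $L^\infty$ divergence) we may assume both sides are defined for $r\in[0,T]$ and $t\in[0,T]$.

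First I would exploit the semigroup identity to rewrite, for $x\in X$ and $t\in[0,T]$,
\begin{equation*}
h(\XX_t(x))=\sup_{0<s\le T}\frac{1}{s}\int_t^{t+s}g(\XX_u(x))\di u =: \tilde M\phi_x(t),
\end{equation*}
where $\phi_x(u):=g(\XX_u(x))$ for $u\in[0,2T]$ and $\tilde M$ is the right-sided Hardy--Littlewood maximal operator in the time variable. By the classical $L^2$-boundedness of $\tilde M$, there exists an absolute constant $C_0$ such that
\begin{equation*}
\int_0^T h(\XX_t(x))^2\di t=\int_0^T(\tilde M\phi_x)(t)^2\di t\le C_0\int_0^{2T}g(\XX_u(x))^2\di u,
\end{equation*}
for every $x\in X$. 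Integrating in $x$, applying Fubini, and using the upper bound $(\XX_u)_*\meas\le e^{uD}\meas$ from \eqref{eq:divergence bound} yields
\begin{equation*}
\int_0^T\int_X h(\XX_t(x))^2\di\meas(x)\di t\le C_0\int_0^{2T}\int_X g(\XX_u(x))^2\di\meas(x)\di u\le 2TC_0 e^{2TD}\norm{g}_{L^2}^2.
\end{equation*}

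To recover $\norm{h}_{L^2}^2$ on the left-hand side, I would use the \emph{lower} bound $(\XX_t)_*\meas\ge e^{-tD}\meas$ from \eqref{eq:divergence bound}, which gives $\int_X h(\XX_t(x))^2\di\meas(x)\ge e^{-tD}\norm{h}_{L^2}^2$ for every $t\in[0,T]$. Integrating in $t\in[0,T]$ produces the factor $\int_0^T e^{-tD}\di t=(1-e^{-TD})/D$ (understood as $T$ when $D=0$), and combining with the previous display we conclude
\begin{equation*}
\norm{h}_{L^2}^2\le \frac{2TC_0 D e^{2TD}}{1-e^{-TD}}\norm{g}_{L^2}^2,
\end{equation*}
which is the required bound with $C(D,T)$ explicit. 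The main subtlety is the need to use a \emph{one-sided} maximal inequality (since $h$ is defined by forward averages only), and the fact that the semigroup identity $\XX_r\circ\XX_t=\XX_{r+t}$ fails in general for time-dependent vector fields — this is why the statement is restricted to the autonomous case, as emphasized right before the lemma. The only routine verification is the extension of the RLF to $[0,2T]$, which follows from the standard existence/uniqueness of RLFs for bounded autonomous Sobolev fields applied on each subinterval.
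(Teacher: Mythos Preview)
Your proposal is correct and follows essentially the same route as the paper: rewrite $h(\XX_t(x))$ via the autonomous semigroup property as a one-sided time-maximal function of $u\mapsto g(\XX_u(x))$, apply the $L^2$ maximal inequality in $t$, then integrate in $x$ using the two-sided compressibility bound \eqref{eq:divergence bound}. The only cosmetic difference is that the paper bounds $e^{-tD}\ge e^{-TD}$ before integrating, yielding $Te^{-DT}$ on the left instead of your sharper $(1-e^{-TD})/D$.
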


\begin{proof}
	Let us set
	\begin{equation*}
		h_t(x):=\sup_{0<s\le T}\frac{1}{s}\int_{t}^{t+s}g(\XX_r(x))\di r\, ,	
		\quad\text{for any $t\in [0,T]$ and any $x\in X$}\, .
	\end{equation*}
	Notice that the weak semi-group property \eqref{eq:semigroup property} gives, for any $t\in [0,T]$,
	\begin{equation}\label{z22}
		h_t(x)=\sup_{0<s\le T}\frac{1}{s}\int_{0}^{s}g(\XX_{r+t}(x))\di r=
		h(\XX_t(x))\, ,\quad\text{for $\meas$-a.e. $x\in X$\, .
		}
	\end{equation}
	Let us now apply the $L^2$-maximal estimate to the function $t\mapsto h_t(x)$, getting
	\begin{equation}\label{z11}
		\int_0^T h_t(x)^2 \di t\le C \int_0^{2T} g(\XX_t(x))^2\di t\, ,
		\quad \text{for any $x\in X$}\, ,
	\end{equation}
	where $C>0$ is a numerical constant. 
	
	Integrating both sides of \eqref{z11} with respect to $\meas$ and using \eqref{eq:divergence bound}, \eqref{z22}, we get
	\begin{align*}
		Te^{-DT} \int_X h^2 \di \meas\le &
		\int_0^T\int_X (h(\XX_t(x)))^2\di \meas(x)\di t\\
		=& \int_0^T\int_X (h_t(x))^2\di \meas(x)\di t\\
		\le & 2CTe^{Dt} \int_X g(x)^2\di\meas(x)\, .
	\end{align*} 
\end{proof}

\begin{proof}[Proof of \autoref{th:main}]
	Let us first prove the theorem under the additional \autoref{Assumption}, we will explain at the end how to get rid of this assumption.
	\medskip

	Fix any $0\le s<T$ and $\eps>0$. By \autoref{lemma:Lusincontinuity} we can find a Borel set $E_1\subset B_R(p)$ with $\meas(B_R(p)\setminus E_1)\le \eps$ and such that 
\begin{equation}	
	\int_s^t g_r(\XX_{s,r}(\cdot))\di r\restr_{E_1}
\end{equation}
	is continuous, for any $t\in [s,T]$.\\ 
	Set $E_2:=\set{g_s'\le 1/\eps}$, where $g_s'$ is as in \eqref{eq:LusinLipschitzflows}. Then let us take $x\in E_1\cap E_2$ such that $E_1\cap E_2$ is of density one at $x$ and there exists $E_3\subset B_R(p)$ with $\haus^N(B_R(p)\setminus E_3)=0$ for which $(x,y)$ satisfies \eqref{eq:est1bar} for any $y\in E_3$.\\
	Notice that, taking the union for $\eps\in (0,1)$, the sets of points $x\in B_R(p)$ selected in this way has full measure in $B_R(p)$. Therefore it is enough to check \eqref{eq:main} for these points.
	
	To do so, let us set $E:=E_1\cap E_2\cap E_3$. Notice that $E$ has density one at $x$ and $\XX_{s,t}\restr_E$ is Lipschitz for any $t\in [s,T]$, by \eqref{eq:LusinLipschitzflows}. Applying \autoref{prop:esttrajbar} and taking into account the continuity of $x\mapsto\int_s^t g_r(\XX_{s,r}(x))\di r$ on $E$, for any $t\in[s,T]$, we deduce
	\begin{align*}
		e^{-2\int_s^t g_r(\XX_{s,r}(x))\di r}\le \liminf_{y\in E,\ y\to x}&\frac{\dist_{G^{\lambda}}(\XX_{s,t}(x),\XX_{s,t}(y))}{\dist_{G^{\lambda}}(x,y)}\\
		&\le
		\limsup_{y\in E,\ y\to x}\frac{\dist_{G^{\lambda}}(\XX_{s,t}(x),\XX_{s,t}(y))}{\dist_{G^{\lambda}}(x,y)}
		\le e^{-2\int_s^t g_r(\XX_{s,r}(x))\di r}\, .
	\end{align*}
	Using \autoref{corollary:Green convergence} we get
	\begin{align}\label{z5}
 	&\limsup_{y\in E,\ y\to x} \frac{\dist_{G^{\lambda}}(\XX_{s,t}(x),\XX_{s,t}(y))}{\dist_{G^{\lambda}}(x,y)}\\
		&=\limsup_{y\in E,\ y\to x} 
		\left( \frac{\dist(\XX_{s,t}(x),\XX_{s,t}(y))}{\dist(x,y)}\right)^{N-2}\frac{\dist(x,y)^{N-2}G^{\lambda}(x,y)}{\dist(\XX_{s,t}(x),\XX_{s,t}(y))^{N-2}G^{\lambda}(\XX_{s,t}(x),\XX_{s,t}(y))}\\
		&=\limsup_{y\in E,\ y\to x} 
		\left( \frac{\dist(\XX_{s,t}(x),\XX_{s,t}(y))}{\dist(x,y)}\right)^{N-2}\frac{\theta(\XX_{s,t}(x))}{\theta(x)}\, .
	\end{align}
	An analogous conclusion holds for the liminf. This gives \eqref{eq:main}, up to replacing $g_r$ with $(N-2)g_r$ and up to the ratio between densities along the trajectory.
	We can now get rid of the term $\theta(\XX_{s,t}(x))/\theta(x)$ in \eqref{z5} thanks to \autoref{measurecontinuity}. In this way we obtain \eqref{eq:main}.
\medskip
	
	In the case of vector fields independent of time, the second conclusion of \autoref{th:main}, namely \eqref{eq:mainresultautonomous}, directly follows from \eqref{eq:main} and \autoref{lemma:doublymaximal}.

\medskip	
    To conclude, let us explain how to get rid of \autoref{Assumption}. 
	We rely on a tensorization argument similar to the one presented in \cite{BrueSemola18,BrueSemolaahlfors}.\\ 
	Let us define $Y=X\times\setR^3$, with product metric measure structure $(Y,\dist_Y,\meas_Y)$. It is easy to verify that $(Y,\dist_Y,\meas_Y)$ verifies \autoref{Assumption}. Then let us consider $v\in L^2([0,T];H_{C,s,{\mathrm{loc}}}^1(TY))$ acting as $v\cdot \nabla (fg) = g v\cdot \nabla f$ for any $f\in \Lip(X)$, $g\in \Lip(\setR^3)$. We shall avoid stressing the dependence fo the various differential operators appearing on the reference metric measure space since there is no risk of confusion. We refer to \cite{GigliRigoni20} for a recent throughout study of second order calculus on product spaces.	\\ 
	One can easily check that $\boldsymbol{Z}_t(x,h)=(\XX_t(x),h)$ for $(x,h)\in Y$, is a RLF associated to $v$. We aim at applying the regularity estimate to $\boldsymbol{Z}_t$ over $(Y,\dist_Y,\meas_Y)$ in order to get the sought estimate for $\XX_t$ on $(X,\dist,\haus^N)$.\\ 
	To this aim we need to slightly modify $v$ to make its support compact. Fix a constant $M>1$ to be made precise later and a smooth cut off function $\phi\in C^\infty(\setR^3)$ satisfying $\phi\equiv 1$ in $B_{RM}(0)$ and $\phi\equiv 0$ in $\setR^3\setminus B_{2RM}(0)$. Then we set $v'=\phi v$. Notice that $v'\in L^2([0,T];H_{C,s}^1(TY))$ and $v',\div v'\in L^\infty$. Moreover, denoting by $\boldsymbol{Z}'$ the RLF of $v'$ it holds $\boldsymbol{Z}'(t,x,h)=\boldsymbol{Z}(t,x,h)$ for $\haus^{N}\times \leb^3$-a.e. $(x,h)\in B_R(0)\times (-1,1)$ and any $t\in [0,T]$, provided $M$ is big enough.
    
 To conclude, we can apply a variant of the argument presented in the first part of the proof to $v'$ and $\boldsymbol{Z}'$. More precisely, in \eqref{z5} we keep $h=0$ fixed and take the limsup and the liminf considering only points $y\in E\cap \left(X\times \{0\}\right)$.

\end{proof}

\section{Proof of \autoref{globallipthm}}\label{sec:proof2}
The main idea for the proof is to argue in a similar manner to \cite{ColdingNaber12,KapovitchLi18}.\\
We begin with a lemma to establish some rough estimates. Notice that the difference between this statement and what can be obtained combining \autoref{prop:esttrajbar} and \autoref{prop:equivalenceGreendistance} is that $r$ can be as large as $R$.  
As for the proof of \autoref{th:main}, in this section we will argue under the additional \autoref{Assumption}. A tensorization argument similar to the one employed for \autoref{th:main} allows to get rid of this assumption.

\begin{lemma}\label{globalliproughlem}
For any $\eps > 0$, there exist $S \subseteq B_{R}(p)$, with $\haus^N(B_R(p)\setminus S)<\eps$, and a constant $\omega_1(K, N, B_{R}(p), H, D, T, \eps)>0$ so that for any $x \in S$, $r \in (0,4R]$ and any $t_1 \in [0,T)$, we can find $A_r \subseteq B_{r}(\XX_{t_1}(x))$ with the following properties:
\begin{itemize}
	\item[i)] $\frac{\haus^N(A_r)}{\haus^N(B_r(\XX_{t_1}(x)))}\geq \frac{1}{2}$;
	\item[ii)] for any $t_2 \in (t_1, t_1+\omega_1]$, $\XX_{t_1, t_2}(A_r) \subseteq B_{4r}(\XX_{t_2}(x))$. 
\end{itemize}
\end{lemma}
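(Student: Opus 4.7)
The plan is to split into two regimes depending on the size of $r$ relative to $\omega_1$: a trivial argument based on the $L^\infty$ bound on $b$ when $r \gtrsim \omega_1$, and an adaptation of the calculation in the proof of \autoref{measurecontinuity} otherwise.

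I would first set up the good set $S$ exactly as in the proof of \autoref{measurecontinuity}: take $S$ to be the set of density points of $E_1 \cap E_2 \cap E_3 \subseteq B_R(p)$, where $E_1$ is the full-measure set on which \eqref{eq:est1bar} with $s=0$ holds for $\haus^N$-a.e.\ $y$, $E_2 := \{x : \int_0^T g_r(\XX_r(x))\,\di r \leq M_1\}$, and $E_3 := \{x : \int_0^T Mx_s^2(\XX_s(x))\,\di s \leq M_2\}$ (with $Mx_s$ the maximal function of $|\nabla_{\sym} b_s|$ used there). The constants $M_1, M_2$ are chosen via Chebyshev so that $\haus^N(B_R(p) \setminus S) < \eps$. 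Set also $M_3 := \max\{(C_1^2 e^{2M_1})^{1/(N-2)}, 1\}$; its role is that $\XX_t$ behaves as an $M_3$-Lipschitz map on a large subset of any sufficiently small ball around $x \in S$, uniformly in $t \in [0,T]$. The value of $\omega_1$ will be fixed at the very end so that all the estimates below close.

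Now fix $x \in S$, $r \in (0,4R]$, $t_1 \in [0,T)$; I will construct $A_r$ depending only on $(x,r,t_1)$ so that the conclusion holds for every $t_2 \in (t_1, t_1 + \omega_1]$. If $r \geq 2D\omega_1$, for $\haus^N$-a.e.\ $y \in B_r(\XX_{t_1}(x))$ the curve $s \mapsto \XX_{t_1,s}(y)$ is $D$-Lipschitz, so for every $t_2 \in (t_1, t_1+\omega_1]$ the triangle inequality gives
\[
\dist(\XX_{t_1,t_2}(y), \XX_{t_2}(x)) \leq D\omega_1 + r + D\omega_1 \leq 2r \leq 4r,
\]
and one takes $A_r$ to be the full-measure subset of $B_r(\XX_{t_1}(x))$ where the RLF curve is defined. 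If instead $r < 2D\omega_1$, I will run the argument of \autoref{measurecontinuity}: set $\mu := M_3^{-1} \omega_1^{1/(2(1+2N))}$ and pick $E_{x,\mu r} \subseteq B_{\mu r}(x)$ with $\haus^N(E_{x,\mu r}) \geq \tfrac{1}{2}\haus^N(B_{\mu r}(x))$ and $\XX_t(E_{x,\mu r}) \subseteq B_{M_3\mu r}(\XX_t(x))$ for every $t \in [0,T]$, then apply \autoref{corollary:dtcontrolcor} to the RLF $\XX_{t_1,t_1+\cdot}$ with $S_1 := B_r(\XX_{t_1}(x))$ and $S_2 := \XX_{t_1}(E_{x,\mu r})$. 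Repeating the chain of estimates from \autoref{measurecontinuity} (Cheeger-Colding segment inequality, Bishop-Gromov, the $M_2$ bound on $Mx_s$, and \eqref{eq:divergence bound}) produces
\[
\int_{S_1 \times S_2} dt^{\XX_{t_1,t_1+\cdot}}_r(\omega_1)\,\di(\haus^N \times \haus^N) \leq c\, r\, \haus^N(B_r(\XX_{t_1}(x)))^2 \sqrt{\omega_1},
\]
and Chebyshev then yields $z \in S_2$ and $A_r \subseteq S_1$ with $\haus^N(A_r) \geq \tfrac{1}{2}\haus^N(B_r(\XX_{t_1}(x)))$ on which $dt^{\XX_{t_1,t_1+\cdot}}_r(\omega_1)(y,z) \leq r$. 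Since the distortion function is monotone in time, the same bound holds with $\omega_1$ replaced by any $t_2 - t_1 \in (0, \omega_1]$, so combining the triangle inequality with the semigroup relation gives, for $y \in A_r$,
\[
\dist(\XX_{t_1,t_2}(y), \XX_{t_2}(x)) \leq (1 + M_3\mu)r + r + M_3\mu r = (2 + 2M_3\mu)r \leq 4r,
\]
since $\mu \leq 1/M_3$.

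The main obstacle will be coordinating the choice of $\omega_1$: it must be small enough that $2D\omega_1 \leq R/M_3$ (so that in the small-$r$ regime $M_3 r$ remains on a fixed scale where Ahlfors regularity and the segment inequality have uniform constants), and small enough that the Chebyshev step effectively forces $dt^{\XX_{t_1,t_1+\cdot}}_r(\omega_1)(y,z) \leq r$, i.e.\ $c\sqrt{\omega_1}/\mu^N \leq 1$, which is ensured for $\omega_1$ small by the definition of $\mu$. Since all constants involved depend only on $K, N, B_R(p), H, D, T, \eps$, the resulting $\omega_1$ has the advertised dependence.
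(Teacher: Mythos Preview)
Your large-$r$ case (using the $D$-Lipschitz bound on trajectories when $r \geq 2D\omega_1$) is correct and is a pleasant simplification compared with what the paper does at large scales. The paper instead splits at the $x$-dependent scale $r_x$ from the proof of \autoref{measurecontinuity}: it handles $r \in (0,r_x]$ by the direct \autoref{measurecontinuity}-type computation, and then runs an induction/bootstrap (if the conclusion holds at scale $r$ with a fixed time window $\omega_1''$, it holds at scale $4r$ with the same window) to climb from $r_x$ up to $4R$.

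Your small-$r$ case, however, has a genuine gap. To ``pick $E_{x,\mu r}\subseteq B_{\mu r}(x)$ with $\haus^N(E_{x,\mu r})\geq \tfrac12\haus^N(B_{\mu r}(x))$ and $\XX_t(E_{x,\mu r})\subseteq B_{M_3\mu r}(\XX_t(x))$'' you need $\haus^N(E\cap B_{\mu r}(x))/\haus^N(B_{\mu r}(x))\geq \tfrac12$, and this is only guaranteed when $\mu r\leq r_x'$, the density-point scale entering $r_x=\min\{r_x',R/M_3\}$. Your threshold $r<2D\omega_1$ gives $\mu r<2D\omega_1/M_3$, but $r_x'$ carries no lower bound over $x\in S$ depending only on $K,N,B_R(p),H,D,T,\eps$: it depends on the fine structure of the set $E$, hence on $b$ itself. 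The constraint you record, $2D\omega_1\leq R/M_3$, only handles the other half of $r_x=\min\{r_x',R/M_3\}$. Thus no single $\omega_1$ with the advertised dependence makes the density condition hold for every $x\in S$ and every $r<2D\omega_1$ simultaneously; the paper's bootstrap in Case~2 is precisely the device that bridges the $x$-dependent scale $r_x$ and the uniform scale $4R$ while keeping $\omega_1$ uniform. You could try to patch your argument by further restricting $S$ to $\{x:r_x\geq\delta\}$ for small $\delta$, but then $\delta$ (and hence $\omega_1$) depends on $b$, contradicting the dependence stated in the lemma.
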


\begin{proof}
Let us fix any $\eps > 0$ and choose $S$ as in proof of \autoref{measurecontinuity}. Fix $x \in S$, $r \in (0,4R]$, and $t_1 \in [0,T)$. We divide the proof of the theorem in two cases, when $r \in (0, r_x]$ and when $r \in (r_x, 4R]$, where $r_x$ is defined as in the proof of \autoref{measurecontinuity}.

\medskip

\noindent \underline{Case 1: $r \in (0, r_x]$}

The proof in this case is very similar to the argument for \autoref{measurecontinuity} and so we will skip some details. 

Let $M_2, M_3$ be as in the proof of the theorem. By definition of $r_x$, we may choose $E_{x, \frac{r}{M_3}} \subseteq B_{\frac{r}{M_3}}(x)$ so that 
\begin{equation}\label{eq:ExrM3bounds}
	\frac{\haus^N(E_{x,\frac{r}{M_3}})}{\haus^N(B_{\frac{r}{M_3}}(x))} \geq \frac{1}{2} \; \; \text{and}\; \; \XX_{t}(E_{x, \frac{r}{M_3}}) \subseteq B_{r}(\XX_{t}(x)) \text{ for any } t \in [0,T]\, .
\end{equation}
The idea is now to use the trajectory of $\XX_{t_1}(E_{x, \frac{r}{M_3}})$ under $\XX_{t_1,t_1+s}$ to control the trajectory of a large portion of $B_{r}(\XX_{t_1}(x))$, as we did before.\\ 
Let $S_1 := B_{r}(\XX_{t_1}(x))$ and $S_2 := \XX_{t_1}(E_{x, \frac{r}{M_3}})$ (possibly after a modification on a set of measure 0). After similar calculations as before (cf. with \eqref{eq:helpful2}) we obtain that, for any $\omega \in [0, T-t_1]$, 
\begin{equation*}
	\int_{S_1 \times S_2} dt^{\XX_{t_1, t_1+\cdot}}_{r}(\omega)(y,z) \, \di (\haus^N \times \haus^N)(y,z)
		\leq c(B_{R}(p),K,N, H, D, T, \eps)r\left(\haus^N(B_{r}(x))\right)^2\sqrt{\omega}\, .
	\end{equation*}
By Bishop-Gromov inequality, \eqref{eq:ExrM3bounds} and \eqref{eq:divergence bound}, arguing as in \eqref{eq:helpful}, we can find $z \in S_2$ so that 
\begin{equation}
	\int_{S_1} dt^{\XX_{t_1, \cdot}}_{r}(\omega)(y,z) \, \di \haus^N(y) \leq  c(B_{R}(p),K,N, H, D, T, \eps)r\haus^N(B_{r}(x))\sqrt{\omega}\, .
\end{equation}
Therefore, for $\omega_1'(K, N, B_{R}(p), H, D, T, \eps)$ sufficiently small and using the $N$-Ahlfors regularity of $X$, we can find a subset $A_r \subseteq B_{r}(\XX_{t_1}(x))$ such that
\begin{itemize}
	\item[i)] $\frac{\haus^N(A_r)}{\haus^N(B_r(\XX_{t_1}(x)))}\geq \frac{1}{2}$;
	\item[ii)] for any $y \in A_r$, $dt^{\XX_{t_1, t_1+\cdot}}_{r}(\omega_1')(y,z) \leq \frac{1}{2}r$.
\end{itemize}
A simple estimate with the triangle inequality and using the definition of $dt^{\XX_{t_1, \cdot}}_{r}$ and \eqref{eq:ExrM3bounds} shows that $\XX_{t_1, t_2}(A_r) \subseteq B_{4r}(\XX_{t_2}(x))$ for any $t_2 \in (t_1, t_1+\omega_1']$, as required. 
\medskip

\noindent \underline{Case 2: $r \in (r_x, 4R]$}

This case will be handled by induction/bootstrap.\\ 
Fix any $r \in (r_x, R]$. We claim that there exists $\omega_1''(K, N, B_{R}(p), H, D, T, \eps)$ so that the following holds: if for some $x \in S$, $0 \leq t_1 < t_2 \leq T$ such that $t_2 - t_1 \leq \omega_1''$, and $r \in [0,\frac{R}{4})$, there exists $A_r \subseteq B_{r}(\XX_{t_1}(x))$ with 
 \begin{enumerate}
	\item $\frac{\haus^N(A_r)}{\haus^N(B_r(\XX_{t_1}(x)))}\geq \frac{1}{2}$;
	\item $\XX_{t_1,t_1+s}(A_r') \subseteq B_{4r}(\XX_{t_1+s}(x))$ for any $s \in [0, t_2-t_1]$,
\end{enumerate}
then the same holds for the scale of $4r$. In other words, there exists $A_{4r}' \subseteq B_{4r}(\XX_{t_1}(x))$ so that 
\begin{enumerate}
	\item $\frac{\haus^N(A_{4r}')}{\haus^N(B_{4r}(\XX_{t_1}(x)))}\geq \frac{1}{2}$;
	\item $\XX_{t_1, t_1+s}(A_{4r}') \subseteq B_{16r}(\XX_{t_1+s}(x))$ for any $s \in [0, t_2-t_1]$. 
\end{enumerate}
Combining this inductive estimate with Case 1, which plays the role of the base step, is enough to prove Case 2, one can simply take $\omega_1 := \min\{\omega_1', \omega_1''\}$.\\ 
The argument to prove the claim above uses the trajectory of $A_r$ under $\XX_{t_1, t_1+s}$ to control the trajectory of most of $B_{4r}(\XX_{t_1}(x))$ under $\XX_{t_1,t_1+s}$ and is very similar to previous estimates of this type. As such, we will not repeat it. 
\end{proof}

Having established \autoref{globalliproughlem}, we will now state a finer version which is time dependent. As will be seen, this will almost immediately give \autoref{globallipthm}.
\begin{lemma}\label{globallipfinelem}
For any $\eps > 0$, there exist $S \subseteq B_{R}(p)$, with $\haus^N\left((B_R(p)\setminus S\right)<\eps$, and constants $\omega_2(K, N, B_{R}(p), H, D, T, \eps)$, $\alpha(N), \beta(N)$ and $C(K, N, B_{R}(p), H, D, T, \eps)$ such that the following holds: for any $x \in S$, $r \in (0,4R]$ and $0 \leq t_1 < t_2 \leq T$ with $t_2-t_1 \leq \omega_2$, there exists $A_r \subseteq B_{r}(\XX_{t_1}(x))$ so that
\begin{itemize}
	\item[i)] $\frac{\haus^N(A_r)}{\haus^N(B_r(\XX_{t_1}(x)))}\geq 1 - (t_2-t_1)^{\beta}$;
	\item[ii)] for any $y \in A_r$, $\dist(\XX_{t_1,t_2}(y), \XX_{t_2}(x)) \leq \dist(y, \XX_{t_1}(x)) + C(t_2 - t_1)^{\alpha}r$. 
\end{itemize}
\end{lemma}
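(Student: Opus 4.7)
The strategy is to refine the construction in the proof of \autoref{measurecontinuity} so that it produces an almost-isometric distance estimate valid at every scale $r \in (0, 4R]$, by applying \autoref{globalliproughlem} (at a small auxiliary scale $\mu r$) as an input to \autoref{corollary:dtcontrolcor}, followed by a careful balancing of exponents. The set $S$ will be the intersection of the set produced by \autoref{globalliproughlem} with the density points of the set $E_3$ constructed in the proof of \autoref{measurecontinuity}, so that the time-integrated squared maximal function of $|\nabla_{\sym} b_t|$ along the trajectory of $x$ is controlled by a constant $M_2$.

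Fix $x \in S$, $r \in (0,4R]$, and $0 \le t_1 < t_2 \le T$; let $\omega = t_2 - t_1$ and pick $\mu \in (0,1)$ (depending on $\omega$) to be chosen. By \autoref{globalliproughlem} at scale $\mu r$ we obtain $A_{\mu r} \subseteq B_{\mu r}(\XX_{t_1}(x))$ with $\haus^N(A_{\mu r}) \ge \tfrac{1}{2}\haus^N(B_{\mu r}(\XX_{t_1}(x)))$ and $\XX_{t_1, t_1+s}(A_{\mu r}) \subseteq B_{4\mu r}(\XX_{t_1+s}(x))$ for every $s \in [0, \omega]$, provided $\omega \le \omega_1$. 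Apply \autoref{corollary:dtcontrolcor} with $S_1 = B_r(\XX_{t_1}(x))$ and $S_2 = A_{\mu r}$, estimating the right-hand side via the Cheeger--Colding segment inequality on $B_{8r}(\XX_{t_1+s}(x))$ exactly as in the fifth displayed chain of \eqref{eq:helpful2}, then using Cauchy--Schwarz, $N$-Ahlfors regularity, Bishop--Gromov and that $x \in E_3$. This yields
\begin{equation*}
\int_{S_1 \times S_2} dt^{\XX_{t_1,t_1+\cdot}}_r(\omega)(y,z)\,\di(\haus^N \times \haus^N) \le C\, r\, \bigl(\haus^N(B_r(\XX_{t_1}(x)))\bigr)^2 \sqrt{\omega},
\end{equation*}
with $C = C(B_R(p),K,N,H,D,T,\eps)$; the key point is that the constant is independent of $r$ because all quantities involved scale correctly with the Bishop--Gromov comparison.

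By averaging in $z$ and using the Bishop--Gromov lower bound $\haus^N(S_2) \gtrsim \mu^N \haus^N(B_r(\XX_{t_1}(x)))$, pick $z \in S_2$ such that
\begin{equation*}
\int_{S_1} dt^{\XX_{t_1,t_1+\cdot}}_r(\omega)(y,z)\,\di \haus^N(y) \le C\, \mu^{-N}\, r\, \haus^N(B_r(\XX_{t_1}(x)))\, \sqrt{\omega}.
\end{equation*}
Setting $A_r := \{y \in S_1 : dt^{\XX_{t_1,t_1+\cdot}}_r(\omega)(y,z) \le \mu r\}$, Chebyshev's inequality gives
\begin{equation*}
\frac{\haus^N(B_r(\XX_{t_1}(x))\setminus A_r)}{\haus^N(B_r(\XX_{t_1}(x)))} \le C\, \mu^{-N-1}\, \sqrt{\omega}.
\end{equation*}
For any $y \in A_r$, the triangle inequality combined with $\dist(y,z) \le r + \mu r$, the distortion bound $|\dist(\XX_{t_1,t_2}(y), \XX_{t_1,t_2}(z)) - \dist(y,z)| \le \mu r$, and $\dist(\XX_{t_1,t_2}(z), \XX_{t_2}(x)) \le 4\mu r$, yields
\begin{equation*}
\dist(\XX_{t_1,t_2}(y), \XX_{t_2}(x)) \le \dist(y, \XX_{t_1}(x)) + c\, \mu\, r.
\end{equation*}

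To conclude, choose $\mu = \omega^{1/(2(N+2))}$, so that the measure deficit is bounded by $C\omega^\beta$ with $\beta = \beta(N) = \tfrac{1}{2(N+2)} > 0$, and the distortion error is bounded by $C\omega^\alpha r$ with $\alpha = \alpha(N) = \tfrac{1}{2(N+2)} > 0$. Then $\omega_2$ is chosen so that $\mu \le 1$ and the rough lemma applies, i.e.\ $\omega_2 = \min\{\omega_1, 1\}$ possibly shrunk further so that $C\omega^\beta \le 1$.

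The main obstacle, already addressed in the sketch above, is to ensure that the constant $C$ in the distortion estimate is independent of the scale $r$ (so that the bound is uniform across $(0, 4R]$). This is handled by invoking Bishop--Gromov and $N$-Ahlfors regularity to compare $\haus^N(B_{8r}(\XX_{t_1+s}(x)))$ to $\haus^N(B_r(\XX_{t_1}(x)))$ with a uniform constant for $r \le 4R$, together with the fact that the bound on $\int_0^T Mx_s^2(\XX_s(x))\,\di s$ from $E_3$ does not see $r$.
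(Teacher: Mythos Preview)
Your proposal is correct and follows essentially the same approach as the paper: apply \autoref{globalliproughlem} at the auxiliary scale $\mu r$ to obtain a reference set $S_2$, feed this into \autoref{corollary:dtcontrolcor} with $S_1=B_r(\XX_{t_1}(x))$, estimate via the segment inequality and the $E_3$-bound on the maximal function, then average in $z$ and apply Chebyshev. The only differences are cosmetic: the paper chooses $\mu=\omega^{1/(2(1+2N))}$ and places the Chebyshev threshold at $c\mu r$ so that the complement has ratio \emph{exactly} $\mu^N$ (giving $\beta=\tfrac{N}{2(1+2N)}$ and $\alpha=\tfrac{1}{2(1+2N)}$), whereas you set the threshold at $\mu r$ and balance with $\mu=\omega^{1/(2(N+2))}$; either balancing works. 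One small bookkeeping point: your Chebyshev bound yields deficit $\le C\omega^{1/(2(N+2))}$ with a constant $C$ in front, while item i) asks for $\le \omega^{\beta}$ with no constant; your final sentence ``$C\omega^\beta\le 1$'' does not quite fix this, but taking instead $\beta=\tfrac{1}{4(N+2)}$ and shrinking $\omega_2$ so that $C\omega_2^{1/(4(N+2))}\le 1$ does.
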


\begin{proof}
Fix any $\eps > 0$. We will fix $\omega_2$ later but assume for the moment that it is less than $\omega_1$ from \autoref{globalliproughlem}. We again choose $S$ as in the proof of \autoref{measurecontinuity}.\\ 
Fix now any $x \in S$, $0 \leq t_1 < t_2 \leq T$ with $t_2 - t_1 \leq \omega_2$, and $r \in (0, 4R]$. Define $\omega := t_2-t_1$ and $\mu := \omega^{\frac{1}{2(1+2N)}}$.\\ 
We can apply \autoref{globalliproughlem} to find a subset $S_2 \subseteq B_{\mu r}(\XX_{t_1})$ such that 
\begin{enumerate}
	\item $\frac{\haus^N(S_2)}{\haus^N(B_{\mu r}(\XX_{t_1}(x)))}\geq \frac{1}{2}$;
	\item for any $s \in [0,\omega]$, $\XX_{t_1, t_1+s}(S_2) \subseteq B_{4\mu r}(\XX_{t_1+s}(x))$. 
\end{enumerate}
Then we can use the trajectory of $S_2$ under $\XX_{t_1,t_1+s}$ to control the trajectory of most of $B_{r}(\XX_{t_1}(x))$ under $\XX_{t_1,t_1+s}$. The computation is nearly identical to the proof of \autoref{measurecontinuity} so we will not repeat it.\\ 
This enables us (see \eqref{eq:fineS1'bounds1}, \eqref{eq:fineS1'bounds2}) to find some $A_r \subseteq B_{r}(\XX_{t_1}(x))$ with 
\begin{equation}
	\frac{\haus^N(A_r)}{\haus^N(B_{r}(\XX_{t_1}(x)))} \geq 1 - \mu^{N}\, ,
\end{equation}
and some $z \in S_2$ such that for any $y \in A_r$ and any $s \in [0, \omega]$,
\begin{equation}
	dt^{\XX_{t_1,t_1+\cdot}}_r(\omega)(y,z) \leq c(K, N, B_{R}(p), H, D, T, \eps)\mu r\, .
\end{equation}
Moreover, choosing $\omega_2(K, N, B_{R}(p), H, D, T, \eps)$ sufficiently small, we may assume that $c\mu r < r$. Using the triangle inequality and the fact that $z \in S_2$,  we find that, for any $y \in A_r$ and any $s \in [0, \omega]$,
\begin{align*}
\dist(\XX_{t_1,t_1+s}(y), \XX_{t_1+s}(x)) \leq &\, \dist(\XX_{t_1,t_1+ s}(y), \XX_{t_1,t_1+s}(z))+\dist(\XX_{t_1, t_1+s}(z), \XX_{t_1+s}(x))\\
						\leq &\, \dist(y,z)+|\dist(\XX_{t_1,t_1+ s}(y), \XX_{t_1,t_1+s}(z))-\dist(y,z)|\\
						&+\dist(\XX_{t_1, t_1+s}(z), \XX_{t_1+s}(x))\\
				\leq &\, \dist(y,\XX_{t_1}(x))+\dist(\XX_{t_1}(x),z)+dt^{\XX_{t_1,\cdot}}_r(\omega)(y,z)\\
				&+\dist(\XX_{t_1,t_1+ s}(z), \XX_{t_1+s}(x))\\
				\leq &\, \dist(y,\XX_{t_1}(x))+\mu r + c\mu r + 4 \mu r\\
				\leq &\, \dist(y,\XX_{t_1}(x)) + C(K, N, B_{R}(p), H, D, T, \eps)\mu r\, .
\end{align*}
This immediately gives the claim with $\beta = \frac{N}{2(1+2N)}$ and $\alpha = \frac{1}{2(1+2N)}$, since $\mu = (t_2-t_1)^{\frac{1}{2(1+2N)}}$. 
\end{proof}

\begin{proof}[Proof of Theorem \ref{globallipthm}]
Fix any $\eps > 0$ and the same $S$ as before. Fix any $x, y \in S$. Fix some $0 \leq t_1 < t_2 \leq T$ with $t_2 - t_1 \leq \omega_2$ given by \autoref{globallipfinelem}.\\ 
It is straightforward to check that $\XX_{t}(x) \in \overline{B_{R}(p)}$ (likewise for $y$) for any $t \in [0,T]$, since a set of positive measure in $B_R(p)$ stays arbitrarily close to $\XX_{t}(x)$ under the flow $\XX_{t}$ (by definition of $S$) and $b$ is supported in $B_{R}(p)$. 

Define $r := \dist(\XX_{t_1}(x), \XX_{t_1}(y)) \leq 2R$. Applying \autoref{globallipfinelem} to $B_{2r}(\XX_{t_1}(x))$ we can find $A_{2r}^{x} \subseteq B_{2r}(\XX_{t_1}(x))$ such that
\begin{itemize}
	\item[1x)] $\frac{\haus^N(A^{x}_{2r})}{\haus^N(B_{2r}(\XX_{t_1}(x)))}\geq 1 - (t_2-t_1)^{\beta}$;
	\item[2x)] for any $z \in A^{x}_{2r}$, $\dist(\XX_{t_1,t_2}(z), \XX_{t_2}(x)) \leq \dist(z, \XX_{t_1}(x)) + C(t_2 - t_1)^{\alpha}r$. 
\end{itemize}
Analogously, applying \autoref{globallipfinelem} to $B_{2r}(\XX_{t_1}(y))$ we can find $A_{2r}^{y} \subseteq B_{2r}(\XX_{t_1}(y))$ such that 
\begin{itemize}
	\item[1y)] $\frac{\haus^N(A_{2r}^{y})}{\haus^N(B_{2 r}(\XX_{t_1}(y)))}\geq 1 - (t_2-t_1)^{\beta}$;
	\item[2y)] for any $z \in A_{2r}^{y}$, $\dist(\XX_{t_1,t_2}(z), \XX_{t_2}(y)) \leq \dist(z, \XX_{t_1}(y)) + C(t_2 - t_1)^{\alpha}r$. 
\end{itemize}
\medskip

Let us consider the set $E := A_{2r}^{x} \cap A_{2r}^{y} \cap B_{r}(\XX_{t_1}(y))$. By Bishop-Gromov inequality, 1x) and 1y), we have that 
\begin{equation}
\frac{\haus^N(E)}{\haus^N(B_{r}(\XX_{t_1}(y)))} \geq 1 - c(K,N,R)(t_2-t_1)^{\beta}\, .
\end{equation} 
By Bishop-Gromov inequality again, $E$ is $c(K,N,R)(t_2-t_1)^{\frac{\beta}{N}}r$-dense in $B_{r}(\XX_{t_1}(y))$. In particular, there exists $z \in E$ so that 
\begin{equation}\label{eq:epsdense}
\dist(\XX_{t_1}(y),z) \leq c(t_2-t_1)^{\frac{\beta}{N}}r = c(t_2-t_1)^{\alpha}r\, , 
\end{equation}
where we used the relationship between $\alpha$ and $\beta$ from \autoref{globallipfinelem} (see the last line of the proof, in particular).\\ 
Then,  by \eqref{eq:epsdense}, 2x) and 2y), we can estimate
\begin{align*}
\begin{split}
	\dist(\XX_{t_2}(x), \XX_{t_2}(y)) &\leq \dist(\XX_{t_2}(x), \XX_{t_1,t_2}(z)) +  \dist(\XX_{t_1, t_2}(z), \XX_{t_2}(y))\\
	&\leq \dist(z, \XX_{t_1}(x)) + C(t_2 - t_1)^{\alpha}r + \dist(z, \XX_{t_1}(y)) + C(t_2 - t_1)^{\alpha}r\\
	&\leq \dist(\XX_{t_1}(x),  \XX_{t_1}(y)) + 2\dist( \XX_{t_1}(y), z) + C(t_2 - t_1)^{\alpha}r\\
	&\leq  r+C_0(K, N, B_{R}(p), H, D, T, \eps)(t_2-t_1)^{\alpha}r\, , 
\end{split}
\end{align*}
which completes the proof. 
\end{proof}

\end{document}